\documentclass[11pt, reqno]{amsart} 
\pdfoutput=1

\usepackage{amssymb}
\usepackage{mathtools}
\usepackage{mathrsfs}
\usepackage{stmaryrd}
\usepackage{upgreek}
\usepackage{centernot}
\usepackage{cancel}

\usepackage{bm}
\usepackage{aliascnt}

\usepackage[T2A, T1]{fontenc}
\usepackage[utf8]{inputenc}
\usepackage[russian, french, english]{babel}
\usepackage{lmodern}
\usepackage[spacing=true,kerning=true,babel=true,tracking=true,nopatch=footnote]{microtype}
\usepackage[shortcuts]{extdash}
\usepackage{indentfirst}
\usepackage{xspace}
\usepackage{csquotes}

\usepackage[a4paper, left=2cm, right=2cm, top=1in, bottom=1in]{geometry}

\usepackage{graphicx}
\usepackage{wrapfig}
\usepackage{float}
\usepackage{array}
\usepackage{multicol}
\usepackage{mdframed}
\usepackage[skins]{tcolorbox}
\usepackage{framed}
\usepackage[justification=centering, labelfont=bf, font=small]{caption} 

\usepackage{tikz}
\usetikzlibrary{cd}
\usetikzlibrary{shapes} 
\usetikzlibrary{arrows.meta}
\usetikzlibrary{decorations.pathmorphing} 
\usetikzlibrary{patterns}

\usepackage[
    sortcites,
    backend=biber, 
    style=alphabetic, 
    sorting=nyt, 
    maxnames=100,
    backref=false
]{biblatex}
\renewbibmacro{in:}{}
\renewbibmacro*{volume+number+eid}{%
	\printfield{volume}%
	\setunit*{\addnbspace}%
	\printfield{number}%
	\setunit{\addcomma\space}%
	\printfield{eid}}

\DeclareFieldFormat[article]{volume}{\textbf{#1}\space}
\DeclareFieldFormat[article]{number}{\mkbibparens{#1}}
\DeclareFieldFormat{journaltitle}{#1,}
\DeclareFieldFormat[thesis]{title}{\mkbibemph{#1}\addperiod}
\DeclareFieldFormat[article, unpublished, thesis, incollection, inproceedings]{title}{\mkbibemph{#1},}
\DeclareFieldFormat[book]{title}{\mkbibemph{#1}\addperiod}
\DeclareFieldFormat[unpublished]{howpublished}{#1, }
\DeclareFieldFormat{pages}{#1}
\DeclareFieldFormat[article]{series}{Ser.~#1\addcomma}

\newtheoremstyle{bfnote}%
{}{}%
{\slshape}{}%
{\bfseries}{\bfseries.}%
{ }%
{\thmname{#1}\thmnumber{ #2}\thmnote{ (\normalfont{}#3)}}
\newtheoremstyle{bfnoteupright}%
{}{}%
{\normalfont}{}%
{\bfseries}{\bfseries.}%
{ }%
{\thmname{#1}\thmnumber{ #2}\thmnote{ (\normalfont{}#3)}}

\theoremstyle{bfnote}
\newtheorem{thm}{Theorem}[section]
\newcommand{\newaliastheorem}[2]{%
    \newaliascnt{#1}{thm}%
    \newtheorem{#1}[#1]{#2}%
    \aliascntresetthe{#1}%
    \expandafter\def\csname theH#1\endcsname{\theHthm}%
}
\newaliastheorem{obs}{Observation}
\newaliastheorem{ques}{Question}
\newaliastheorem{lem}{Lemma}
\newaliastheorem{coro}{Corollary}
\newaliastheorem{cors}{Corollaries}
\newaliastheorem{prop}{Proposition}
\newaliastheorem{conj}{Conjecture}
\theoremstyle{bfnoteupright}
\newaliastheorem{exmp}{Example}
\theoremstyle{bfnote}
\theoremstyle{bfnoteupright}
\newaliastheorem{defi}{Definition}
\theoremstyle{bfnote}
\newaliastheorem{cons}{Construction}
\newaliastheorem{nota}{Notation}
\newaliastheorem{prob}{Problem}
\newaliastheorem{rem}{Remark}
\newaliastheorem{cor}{Corollary}

\newaliastheorem{pa}{Page}
\newaliastheorem{clm}{Claim}

\theoremstyle{remark}
\newtheorem*{ques*}{Question}
\newtheorem*{remk*}{Remark}

\newcommand*{\myproofname}{Proof}

\makeatletter
\newcommand{\neutralize}[1]{\expandafter\let\csname c@#1\endcsname\count@}
\makeatother

\makeatletter
\newcommand{\authorfootnote}[1]{%
  \begingroup
  \renewcommand{\@makefnmark}{}%
  \renewcommand{\@makefntext}[1]{\noindent ##1}%
  \footnotetext{#1}%
  \endgroup
}
\makeatother

\usepackage[hidelinks]{hyperref}
\usepackage{cleveref} 

\crefname{thm}{theorem}{theorems}
\Crefname{thm}{Theorem}{Theorems}

\crefname{lem}{lemma}{lemmas}
\Crefname{lem}{Lemma}{Lemmas}

\crefname{prop}{proposition}{propositions}
\Crefname{prop}{Proposition}{Propositions}

\crefname{cor}{corollary}{corollaries}
\Crefname{cor}{Corollary}{Corollaries}
\crefname{coro}{corollary}{corollaries}
\Crefname{coro}{Corollary}{Corollaries}
\crefname{cors}{corollary}{corollaries}
\Crefname{cors}{Corollary}{Corollaries}

\crefname{defi}{definition}{definitions}
\Crefname{defi}{Definition}{Definitions}

\crefname{rem}{remark}{remarks}
\Crefname{rem}{Remark}{Remarks}

\crefname{obs}{observation}{observations}
\Crefname{obs}{Observation}{Observations}

\crefname{conj}{conjecture}{conjectures}
\Crefname{conj}{Conjecture}{Conjectures}

\crefname{clm}{claim}{claims}
\Crefname{clm}{Claim}{Claims}

\crefname{exmp}{example}{examples}
\Crefname{exmp}{Example}{Examples}

\crefname{ques}{question}{questions}
\Crefname{ques}{Question}{Questions}

\crefname{cons}{construction}{constructions}
\Crefname{cons}{Construction}{Constructions}

\crefname{nota}{notation}{notations}
\Crefname{nota}{Notation}{Notations}

\crefname{prob}{problem}{problems}
\Crefname{prob}{Problem}{Problems}

\crefname{pa}{page}{pages}
\Crefname{pa}{Page}{Pages}

\crefname{fact}{fact}{facts}
\Crefname{fact}{Fact}{Facts}

\crefname{ack}{acknowledgment}{acknowledgments}
\Crefname{ack}{Acknowledgment}{Acknowledgments}

\usepackage[inline]{enumitem}
\setlist{topsep=3pt,itemsep=3pt} 

\usepackage{setspace}
\newcommand{\B}{\mathcal{B}}
\newcommand{\G}{\mathcal{G}}
\newcommand{\F}{\mathcal{F}}
\newcommand{\M}{\mathcal{M}}
\newcommand{\T}{\mathcal{T}}
\renewcommand{\H}{\mathcal{H}}
\newcommand{\cL}{\mathcal{L}}
\newcommand{\R}{\mathbb{R}}
\newcommand{\cB}{\mathcal{B}}

\newcommand{\cS}{\mathcal{S}}
\newcommand{\cH}{\mathcal{H}}
\newcommand{\cX}{\mathcal{X}}
\newcommand{\dt}{\mathrm{d} t}

\newcommand{\dtv}{d_{\mathrm{tv}}}
\newcommand{\mB}{\mathcal{M}(\B)}
\newcommand{\mpB}{\mathcal{M}_+(\B)}
\newcommand{\mBB}{\mathcal{M}(\B^2)}
\newcommand{\mpBB}{\mathcal{M}_+(\B^2)}

\newcommand{\emphd}[1]{{\fontseries{b}\selectfont\textsf{#1}}}
\newcommand{\kab}{\kappa_{\alpha,\beta}}
\DeclareMathOperator{\bmm}{\mathsf{bmm}}

\DeclareMathOperator{\bd}{\mathsf{Bd}}

\title{\sffamily Submodular Flows and Extreme Flows on Measurable Spaces} 
\date{}

\author[J. Yu]{Jing~Yu}
\author[J. Zhang]{Junchi~Zhang}
\author[M. Zhou]{Mingyang~Zhou}

\begin{document}

\begin{abstract}
The theory of submodular flows, introduced by Edmonds and Giles, is a cornerstone of combinatorial optimization, unifying network flows, matroid intersections and directed cut coverings. In this paper, we establish a measurable-space version of this framework, addressing the structural existence and duality questions raised as part of Problem~10.6 by Lovász in \cite{lovasz2023submodular}.

We develop a theory of submodular flows on standard Borel spaces and establish the measurable analogues of the existence and optimality theorems. Furthermore, we introduce a measurable notion of the residual graph and characterize extreme flows by combining a base-polytope intersection condition with an acyclicity condition for the measurable residual graph, 
generalizing the discrete geometric intuition to the infinite-dimensional setting. Finally, 
we apply the theory to constrained supply-demand problems on measurable bipartite graphs and to fractional measurable orientations.
\end{abstract}

\maketitle
\authorfootnote{(JY) Shanghai Center for Mathematical Sciences, Fudan University, Shanghai, China; 
e-mail: \texttt{jyu@fudan.edu.cn}. Partially supported by the National Natural Science Foundation of China grant 12371343 and 12525110 (PI: Hehui Wu).\\[2pt]
(JZ) Shanghai Center for Mathematical Sciences, Fudan University, Shanghai, China; 
e-mail: \texttt{jczhang24@m.fudan.edu.cn}.\\[2pt]
(MZ) School of Mathematical Sciences, Fudan University, Shanghai, China; 
e-mail: \texttt{myzhou25@m.fudan.edu.cn}.}

\section{Introduction}

Flow problems form one of the central paradigms of combinatorial optimization. In a finite directed graph, classical results such as Hoffman's Circulation Theorem \cite{Hoffman1960flow} and the Max-Flow-Min-Cut Theorem \cite{Fulkerson1956} characterize the existence of feasible circulations and flows under lower and upper capacity constraints. A far-reaching extension of this framework was the \emph{submodular flow problem} introduced by Edmonds and Giles~\cite{Edmond1975}: Instead of prescribing only edge capacities, the boundary of the flow on every vertex set is controlled by a submodular set function. If $G=(V,E)$ is a finite digraph and $f\colon E\to \mathbb R$ is a flow, then for a subset $X\subseteq V$ one writes $\partial f(X)$ for the net outflow of $X$; a \emphd{submodular flow} is then a feasible flow $f$ satisfying 
\[
\partial f(X)\le \varphi(X)
\]
for all $X\subseteq V$,
where $\varphi\colon 2^V\to \mathbb R$ is a submodular function. This point of view unifies ordinary network flows, cut covering problems, and several polymatroidal constructions, and leads to powerful min-max theorems and polyhedral duality; see, for example,~\cite{fujishige2005submodular,frank2011connections}.

A different direction, initiated by Lov\'asz~\cite{lovasz2021flows}, extends flow theory from finite graphs to measures on the square of a measurable space $J$. In that setting, a flow is a finite signed measure on $J\times J$, a circulation is a measure with equal marginals, and lower and upper capacities are given by bounding measures. Working on standard Borel spaces, Lov\'asz established measurable analogues of Hoffman's theorem, the supply-demand theorem, and the basic optimality criterion for circulations with cost. Independently, in his study of submodular set functions on general set algebras, Lov\'asz asked whether a corresponding theory of submodular flows exists in the measurable setting~\cite[Problem~10.6]{lovasz2023submodular}, which is the main purpose of this paper.

Following~\cite{lovasz2021flows}, we occasionally use the phrase ``measurable space'' informally, but unless explicitly stated otherwise all spaces in this paper are \emph{standard Borel spaces}. Our basic object is the following. Let $(J,\B)$ be a standard Borel space, let $\alpha,\beta\in \mathcal M(\B^2)$ with $\alpha\le \beta$, and let $\varphi\colon \B\to \mathbb R$ be a bounded continuous submodular set function with $\varphi(\emptyset)=\varphi(J)=0$. For a flow $\mu\in \mathcal M(\B^2)$, its boundary is defined as
\[
\partial\mu(X):=\mu(X\times X^c)-\mu(X^c\times X),\qquad X\in\B.
\]
We call $\mu$ a \emphd{feasible submodular flow} for $(J,\B,\alpha,\beta,\varphi)$ if $\alpha\le \mu\le \beta$ and $\partial\mu\le \varphi$ on $\B$.

Our first main result is a measurable analogue of the Edmonds-Giles feasibility theorem \cite{Edmond1975}. We prove in \Cref{submoexist} that a feasible submodular flow exists if and only if
\[
\alpha(X\times X^c)-\beta(X^c\times X)\le \varphi(X)
\]
for all $X\in\B$.
Next, a submodular-constraint version of the optimality theorem for measurable circulations with cost is given in \Cref{thm:minicostsubmo}, using a separation argument and the potential representation theorem for measurable circulations.
These two theorems extend Theorem 4.4 and Theorem 4.6 in \cite{lovasz2021flows}.

Another topic of the paper is \emph{extreme feasible flows}.
For finite cases, the Birkhoff–von Neumann Theorem \cite{Birkhoff1946extreme}
describes a criterion for extreme points of the convex polytope formed by doubly stochastic matrices. 
A number of results follow in characterizing extreme measures with given marginals \cite{Diego1972extreme,Ma2025},
especially for the extreme coupling measures of Lebesgue measure on $[0,1]$; see, for example, \cite{Lindenstrauss1965extreme,Losrt1982stochastic}.
In \cite{benes1987extreme,Kevin1995extreme,Abbas2016extreme}, a necessary condition related to acyclicity for the support of extreme doubly stochastic measures is given. Motivated by the acyclicity criterion for residual graphs in the finite setting, we define the measurable residual graphs and give a characterization of extreme feasible submodular flows in \Cref{thm:extremesubmodularflow} and also of \emph{reduced} feasible submodular flows in \Cref{thm:reducedextremesubmodularflow}.

As an application, we study transshipment problems on standard Borel spaces. We use \Cref{thm:extremesubmodularflow} to characterize extreme feasible transshipments, and regarding the existence of feasible transshipments, a Hall-type criterion is given in \Cref{prop:hall}, which can be viewed as a constrained version of \cite[Proposition 4.15]{lovasz2021flows}.
As another application, \Cref{thm:measurable_orientation} gives a fractional measurable analogue of Frank’s orientation theorem \cite{frank1996orientation} for highly edge-connected graphs.
However, obtaining a non-fractional measurable orientation is more delicate, and we pose it as an open problem.

To summarize, we establish a submodular flow theory in measurable spaces and extend the existence and optimality criteria of feasible flows in \cite{lovasz2021flows} to submodular flows. Further, we give characterizations of extreme submodular flows and extreme reduced flows.

The organization of the paper is as follows.
In \Cref{sec2} we review the measure-theoretic flow framework of~\cite{lovasz2021flows}, recall the relevant facts on bounded continuous submodular set functions, and prove auxiliary results on basic minorizing charges. In \Cref{sec3} we establish the existence and optimality theorems for measurable submodular flows. In \Cref{sec4} we introduce measurable residual graphs and develop the extremality theory for submodular flows and reduced flows. Finally, \Cref{sec5} is devoted to applications in bipartite transshipment problems and measurable orientations.

\section{Preliminaries}\label{sec2}
\subsection{Measure spaces and measurable flows}\label{lovaszdefi}

Most definitions in this section are adapted from~\cite{lovasz2021flows}. 
\begin{defi}[Measure spaces]
A \emphd{standard Borel space} is a measurable space isomorphic to the Borel $\sigma$-algebra of a
Polish space. Throughout the paper we identify it with a Polish realization $(J,\B)$.
   The \emphd{diagonal} $$\Delta_J:=\{(x,x)\in J\times J\;:\; x\in J\}$$ belongs to $\B^2$.
    We denote by $\mB$ the set of all finite signed measures on $(J,\B)$ and by $\mpB$ the set of all finite measures on $(J,\B)$.
    
    For any $\alpha\in \mB$, the \emphd{total variation norm} of $\alpha$ is    $$ \| \alpha \| :=\sup_{A \in \B} \alpha(A)-\inf_{B \in \B}\alpha(B). $$
    Then $\|\cdot\|$ is a norm on $\mB$ turning it into a Banach space, with $\mpB$ a closed cone of $\mB$.
    This norm induces a metric on $\mpB$ called \emphd{total variation distance}, denoted by $\dtv(\alpha,\beta)=\| \alpha-\beta \|.$
    Since $(J\times J,\B^2)$ is also a standard Borel space, we define $\mathcal{M}(\B^2)$ and $\mpBB$ analogously.
    
    For any $A\in \B^2$ and any $\mu\in \mBB$, we denote by $\mu|_A\in \mBB$  the \emphd{restriction measure} such that $$\mu|_A(B)=\mu(A\cap B)$$ for any $B\in \B^2$.
    The \emphd{transpose} of $E\in \B^2$ is the measurable subset $$E^*:=\{(x,y)\;:\; (y,x)\in E\}.$$
    The \emphd{transpose} of $\mu\in \mBB$ is the unique signed measure $\mu^*$  satisfying $\mu^*(E)=\mu(E^*)$ for any $E\in \B^2$. We write $\mu^1,\mu^2$ for the marginals, i.e., the signed measures on $(J,\B)$ such that for any $X\in\B$, $$\mu^1(X)=\mu(X\times J), \qquad \mu^2(X)=\mu(J\times X).$$

    The order on $\mB$ is the usual measure order: $\mu\le \nu$ means $\nu-\mu\in \mpB$.

    By the \emphd{Hahn decomposition} given a standard Borel space $(J,\B)$ and $\mu\in\mB$, there exist disjoint sets $P,N\in\B$, with $P \cup N =J$ such that $\mu|_P,-\mu|_N\in \mpB$. We write 
    $$\mu_+:=\mu|_P, \qquad \mu_-:=\mu|_N.$$
    Then
    $$\mu=\mu_+-\mu_-, \qquad |\mu|=\mu_++\mu_-.$$
    Here $|\mu|$ is the \emphd{total variation measure} of $\mu$.
    
    For any two $\mu,\nu\in\mB$, the \emphd{join and meet} are defined by
    \begin{align*}
        \mu\vee\nu(X)&:=\sup\{ \mu(X_1)+\nu(X_2)\;:\; X=X_1\sqcup X_2, \;X_1,X_2\in\B\},\\
        \mu\wedge\nu(X)&:=\inf\;\{ \mu(X_1)+\nu(X_2)\;:\; X=X_1\sqcup X_2, \;X_1,X_2\in\B\}.
    \end{align*}
    It can be shown that $\mu\wedge\nu,\mu\vee\nu\in\mB$ and if $\mu,\nu\in\mpB$, then $\mu\wedge\nu,\mu\vee\nu\in\mpB$.

    For a signed measure $\nu$, we call $\nu$ \emphd{absolutely continuous} with respect to a positive measure $\mu$, denoted by $\nu\ll\mu$, if $\mu(X)=0$ implies $\nu(X)=0$ for any $X\in\B$. A measure $\alpha$ is \emphd{supported on} a measurable set $S\in\B$ if $\alpha(S^c)=0$.

    Given two signed measures $\lambda,\mu \in \mB$, we define the \emphd{product measure}, denoted by $\lambda\otimes \mu\in\mBB$, to be
    $\lambda\otimes \mu (X\times Y)=\lambda(X)\times \mu(Y)$ for any $X,Y\in \B$.

    For any $\alpha\in \mB$ and any bounded measurable function $v$ on $(J,\B)$, the integral of $v$ with respect to $\alpha$ is denoted as $$\alpha(v):=\int_{J}v(x)\,\mathrm{d}\alpha(x).$$
\end{defi}

The following is a compactness result about the space of measures.

\begin{lem}[{\cite[Lemma~3.1]{lovasz2021flows}}]\label{compactmeasure}
Let $(J,\mathcal B)$ be a standard Borel space and let $\psi\in \M_+(\B^2)$. Then every sequence $(\mu_n)_{n\ge 1}$ in
\[
K_\psi:=\{\mu\in \M(\B^2): |\mu|\le \psi\}
\]
admits a subsequence $(\mu_{n_k})_{k\ge1}$ and a measure $\mu\in K_\psi$ such that
\[
\mu_{n_k}(A)\to \mu(A)\qquad\text{for every }A\in\B^2.
\]
\end{lem}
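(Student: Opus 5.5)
Since $|\mu_n|\le\psi$, each $\mu_n$ is absolutely continuous with respect to $\psi$. By Radon--Nikodym I will write $\mu_n=f_n\psi$ with $f_n$ measurable and $|f_n|\le 1$ $\psi$-a.e. This reduces the statement to a compactness statement about the unit ball of $L^\infty(\psi)$. That ball is the dual of $L^1(\psi)$, which is separable because $\B^2$ is countably generated ($J\times J$ is standard Borel). Hence the ball is weak-$*$ compact and metrizable, by Banach--Alaoglu together with separability of the predual. So I can extract a subsequence $f_{n_k}$ converging weak-$*$ to some $f$ with $|f|\le 1$ a.e.

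Then I set $\mu:=f\psi$. Clearly $|\mu|=|f|\psi\le\psi$, so $\mu\in K_\psi$. For any $A\in\B^2$ the indicator $\mathbf 1_A$ lies in $L^1(\psi)$, since $\psi$ is finite. Weak-$*$ convergence therefore gives
\[\mu_{n_k}(A)=\int_A f_{n_k}\,\mathrm d\psi\to\int_A f\,\mathrm d\psi=\mu(A),\]
which is the claim.

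The only point needing care is obtaining sequential, not merely net, compactness; this is where separability of $L^1(\psi)$ enters. If I wanted to avoid functional analysis, I would argue directly instead. I would fix a countable algebra $\mathcal A$ generating $\B^2$ and diagonally extract a subsequence along which $\mu_{n_k}(A)$ converges for all $A\in\mathcal A$. I would then extend to all of $\B^2$ by approximation: for $A\in\B^2$ and $\varepsilon>0$ there is $A'\in\mathcal A$ with $\psi(A\triangle A')<\varepsilon$. This gives $|\mu_{n}(A)-\mu_n(A')|<\varepsilon$ uniformly in $n$, so the sequence $\mu_{n_k}(A)$ is Cauchy. The limit set function $\mu$ is finitely additive and satisfies $|\mu(A)|\le\psi(A)$, which forces countable additivity since $\psi$ is a finite measure. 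The same domination gives $|\mu|\le\psi$.

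The main obstacle is this uniform approximation step in the direct route. It is handled precisely by the domination $|\mu_n|\le\psi$.
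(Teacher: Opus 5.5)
Your proof is correct. The paper does not prove this lemma itself but imports it from Lov\'asz, and your main route (writing $\mu_n=f_n\psi$ with $|f_n|\le 1$, viewing $K_\psi$ as the unit ball of $L^\infty(\psi)=L^1(\psi)^*$, applying Banach--Alaoglu, and testing against indicators $\mathbf 1_A\in L^1(\psi)$) is exactly the identification the paper uses for $K_{\Lambda_0}$ at the start of \Cref{sec4}. Your point that separability of $L^1(\psi)$, which holds because $\B^2$ is countably generated, makes the ball weak-$*$ metrizable is what upgrades that compactness to the sequential statement claimed here, and your diagonal-extraction-plus-approximation alternative is also sound.
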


Next we introduce the flow theory on measurable spaces.

\begin{defi}[Measurable flows, circulations and potentials]
Given a standard Borel space $(J,\B)$, a \emphd{flow} on $J$ is a finite signed measure $\mu\in\mBB$. Given $\sigma,\tau\in\mB$, a flow $\mu$ is called a $\sigma$-$\tau$ flow if $$\mu^1-\mu^2=\sigma-\tau.$$
A \emphd{circulation} is a flow $\alpha\in\mBB$ with $\alpha^1=\alpha^2$.
Note that a circulation is a $0$-$0$ flow.
A \emphd{network system} is a quadruple  $(J,\B,\alpha,\beta)$ with $\alpha, \beta \in\mBB$. A flow $\mu$ is \emphd{feasible} if $\alpha\leq \mu\leq \beta$. 

A measurable function $F \colon J \times J \to \mathbb{R}$ is called a \emphd{potential} if there exists a bounded measurable function $f \colon J \to \R$ such that for any $x,y\in J$, we have $$F(x,y) = f(x) - f(y).$$ Since adding a constant to $f$ does not change the difference, one may assume that $f$ is nonnegative.
In particular, we can assume there exists $D\in\R_+$ such that $0\leq f<D$.
\end{defi}

\begin{rem}\label{potentialint}
 Every potential $F(x,y)=f(x)-f(y)$ with $0\leq f<D$ can be expressed as 
$$ F(x,y) = \int_{0}^D \big( \mathbf{1}_{A_t}(x) - \mathbf{1}_{A_t}(y) \big) \dt,$$
where $A_t= \{ x \in J : f(x) \geq t \}$, $0 \leq t \leq D$, is a measurable subset of $J$ such that $$A_t \supseteq A_s,\;\forall\;t < s;\;\; A_D = \emptyset,\;\;A_0 = J.$$    
\end{rem}

The existence criterion of feasible flows is given as follows.

\begin{thm}[{\cite[Theorem 4.4]{lovasz2021flows}}]\label{circuexistence}
    Given a standard Borel space $(J,\B)$ and $\alpha,\beta\in\mBB$, there exists a circulation $\mu$ such that $\alpha\leq \mu\leq \beta$ if and only if $\alpha\leq \beta$ and 
    \[\alpha(X\times X^c)\leq \beta(X^c\times X) \quad \text{ for all $X\in\B$}.\]
\end{thm}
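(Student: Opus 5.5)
Necessity is immediate. If $\mu$ is a circulation with $\alpha\le\mu\le\beta$, then $\alpha\le\beta$. Moreover, for every $X\in\B$ the equality of marginals gives
\[
\mu(X\times J)=\mu(J\times X).
\]
Subtracting $\mu(X\times X)$ from both sides yields $\mu(X\times X^c)=\mu(X^c\times X)$. Hence
\[
\alpha(X\times X^c)\le\mu(X\times X^c)=\mu(X^c\times X)\le\beta(X^c\times X).
\]

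For sufficiency I would reduce to the finite Hoffman circulation theorem and then pass to a limit using \Cref{compactmeasure}.

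\emph{Step 1 (finite approximation).} Since $(J,\B)$ is standard Borel, choose an increasing sequence of finite measurable partitions $\mathcal P_n$ of $J$ whose union generates $\B$; for instance, take preimages of dyadic intervals under a Borel isomorphism with a subset of $[0,1]$. For each $n$, form the finite digraph on the vertex set $\mathcal P_n$, including loops, with capacities
\[
a_n(P,Q)=\alpha(P\times Q),\qquad b_n(P,Q)=\beta(P\times Q).
\]
Any union $X$ of classes of $\mathcal P_n$ lies in $\B$, so the hypothesis gives $\sum_{P\subseteq X,\,Q\subseteq X^c}a_n(P,Q)\le\sum b_n(Q,P)$. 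Together with $a_n\le b_n$, this is exactly Hoffman's condition, so there is a finite circulation $m_n$ with $a_n\le m_n\le b_n$.

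\emph{Step 2 (lifting to measures).} Lift $m_n$ to a measure $\mu_n$ on $\B^2$ by interpolating on each block $P\times Q$. If $b_n(P,Q)>a_n(P,Q)$, write $m_n(P,Q)=(1-t)\,a_n(P,Q)+t\,b_n(P,Q)$ with $t\in[0,1]$ and set
\[
\mu_n|_{P\times Q}=(1-t)\,\alpha|_{P\times Q}+t\,\beta|_{P\times Q}.
\]
If $b_n(P,Q)=a_n(P,Q)$, set $\mu_n|_{P\times Q}=\alpha|_{P\times Q}$. With this choice $\alpha\le\mu_n\le\beta$ holds as measures, so $|\mu_n|\le|\alpha|+|\beta|=:\psi$. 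Moreover, $\mu_n^1(X)=\mu_n^2(X)$ for every $X$ that is a union of classes of $\mathcal P_n$.

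\emph{Step 3 (limit).} By \Cref{compactmeasure}, a subsequence of $(\mu_n)$ converges setwise to some $\mu\in K_\psi$. Setwise convergence preserves both $\alpha\le\mu\le\beta$ and the equality $\mu^1(X)=\mu^2(X)$ for $X$ in the algebra $\bigcup_n\sigma(\mathcal P_n)$. This algebra generates $\B$, and both $\mu^1$ and $\mu^2$ are finite signed measures, so a monotone-class argument gives $\mu^1=\mu^2$ on all of $\B$.

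The main obstacle I expect is Step 2. The interpolation must keep $\alpha\le\mu_n\le\beta$ at the level of measures rather than merely on blocks, which requires care where $\alpha$ or $\beta$ is signed; the convex-combination choice above is meant to handle this. Beyond that, the only delicate point is checking that \Cref{compactmeasure} delivers setwise convergence on every set in $\B^2$, so that the limiting inequalities and the marginal equalities pass to $\mu$.
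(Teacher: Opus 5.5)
Your proof is correct, and it handles the two points you flagged.

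\begin{itemize}
\item On the quotient digraph of $\mathcal P_n$, the cut sums are exactly $\alpha(X\times X^c)$ and $\beta(X^c\times X)$ for unions $X$ of classes. So the finite Hoffman theorem applies; it holds for arbitrary real bounds, and loops are unconstrained.
\item The blockwise convex combination gives $\alpha\le\mu_n\le\beta$ as measures and $\mu_n(P\times Q)=m_n(P,Q)$. In the degenerate case, $\beta-\alpha\ge 0$ has zero mass on $P\times Q$ and therefore vanishes there. Hence $|\mu_n|\le\beta_++\alpha_-\le|\alpha|+|\beta|$.
\item \Cref{compactmeasure} gives setwise convergence on all of $\B^2$. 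Every $X\in\sigma(\mathcal P_N)$ is a union of classes of $\mathcal P_n$ for all $n\ge N$. So the marginal identity survives the limit on the generating algebra, and the monotone class theorem extends it to $\B$.
\end{itemize}

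The paper does not prove this statement itself; it quotes it from Lov\'asz. The route the paper uses for its own results of this type is functional-analytic. The clearest instance is the Optimality Theorem \Cref{thm:minicostsubmo}, whose case $b=0$ is exactly a Hoffman-type cut computation. That route runs as follows:
\begin{itemize}
\item Assume the $\epsilon$-neighbourhoods of the relevant convex sets are disjoint and separate them by \Cref{thm:separatinghyperplane}.
\item Represent the part of the separating functional that vanishes on circulations by a potential $F=f(x)-f(y)$, via \Cref{convexseparation}.
\item Integrate over the level sets $\{f\ge t\}$ with Fubini to exhibit a violated cut.
\item Use \Cref{compactmeasure} only at the end, to upgrade $\epsilon$-approximate solutions to an exact one.
\end{itemize}

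You instead replace Hahn--Banach and the potential-representation lemma by the combinatorial Hoffman theorem on a generating sequence of finite partitions, with compactness doing all the limiting work. Your route buys elementarity and a transparent construction. As a small bonus, it shows the cut condition is only needed for $X$ in the countable algebra $\bigcup_n\sigma(\mathcal P_n)$.

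The separation route buys three things:
\begin{itemize}
\item It needs no finite theorem as input.
\item It produces the dual certificate (a potential whose level set is a violated cut) directly in the measurable setting.
\item It extends almost verbatim to the cost version and the min--max formulas of \Cref{thm:minicostsubmo}. A discretization argument there would additionally have to approximate the cost function $v$ and control the finite dual solutions in the limit.
\end{itemize}
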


\begin{coro}\label{flowexistence}
    Given a standard Borel space $(J,\B)$, two signed measures $\sigma,\tau\in\mB$ with $\sigma(J)=\tau(J)$ and two signed measures $\alpha,\beta\in\mBB$ with $\alpha \le \beta$, there exists a $\sigma$-$\tau$ flow $\mu$ such that $\alpha\leq \mu\leq \beta$ if and only if  
    $$\alpha(X\times X^c)-\beta(X^c\times X)\leq \sigma(X)-\tau(X) \quad \text{for all $X\in\B$}.$$
\end{coro}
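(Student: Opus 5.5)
The plan is to reduce \Cref{flowexistence} to \Cref{circuexistence} by enlarging the space with an auxiliary point that absorbs the supply $\sigma$ and returns the demand $\tau$. Let $J'=J\sqcup\{s\}$, which is again a standard Borel space, and let $\B'$ be its Borel $\sigma$-algebra. Write $\delta_s$ for the point mass at $s$. Put
\[
\alpha'=\alpha+\delta_s\otimes\sigma+\tau\otimes\delta_s,\qquad \beta'=\beta+\delta_s\otimes\sigma+\tau\otimes\delta_s .
\]
Here $\alpha,\beta$ are viewed as measures on $J'\times J'$ supported on $J\times J$. The edges between $s$ and $J$ are thus forced exactly: $\alpha'$ and $\beta'$ agree there.

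For necessity, let $\mu$ be a feasible $\sigma$-$\tau$ flow. Then $\mu'=\mu+\delta_s\otimes\sigma+\tau\otimes\delta_s$ satisfies $\alpha'\le\mu'\le\beta'$. It is also a circulation on $J'$:
\begin{itemize}
\item on $X\subseteq J$ the marginals are $\mu^1(X)+\tau(X)$ and $\mu^2(X)+\sigma(X)$, which agree because $\mu^1-\mu^2=\sigma-\tau$;
\item on $\{s\}$ the marginals are $\sigma(J)$ and $\tau(J)$, which agree by hypothesis.
\end{itemize}
Conversely, any circulation $\mu'$ with $\alpha'\le\mu'\le\beta'$ coincides with $\delta_s\otimes\sigma+\tau\otimes\delta_s$ off $J\times J$ and has zero mass on $\{(s,s)\}$. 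Its restriction $\mu$ to $J\times J$ therefore satisfies $\alpha\le\mu\le\beta$, and the same marginal computation shows that $\mu$ is a $\sigma$-$\tau$ flow. So the problem is equivalent to the existence of a feasible circulation for $(J',\B',\alpha',\beta')$.

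Now apply \Cref{circuexistence}. Since $\alpha\le\beta$, we have $\alpha'\le\beta'$. Every $Y\in\B'$ is either $X$ or $X\cup\{s\}$ with $X\in\B$.
\begin{itemize}
\item For $Y=X$: $\alpha'(Y\times Y^c)=\alpha(X\times X^c)+\tau(X)$ and $\beta'(Y^c\times Y)=\beta(X^c\times X)+\sigma(X)$. The condition of \Cref{circuexistence} becomes exactly $\alpha(X\times X^c)-\beta(X^c\times X)\le\sigma(X)-\tau(X)$.
\item For $Y=X\cup\{s\}$, so $Y^c=X^c$: we get $\alpha'(Y\times Y^c)=\alpha(X\times X^c)+\sigma(X^c)$ and $\beta'(Y^c\times Y)=\beta(X^c\times X)+\tau(X^c)$. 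The condition is the stated one applied to $X^c$, because $\sigma(X^c)-\tau(X^c)$ combines with the boundary terms for $X^c$ in the right way.
\end{itemize}
Hence the cut conditions of \Cref{circuexistence} for $(J',\B',\alpha',\beta')$ are equivalent to the stated condition for all $X\in\B$, and the corollary follows.

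The main point needing care is that \Cref{circuexistence} is stated for signed $\alpha,\beta$. Here $\sigma$ and $\tau$ are signed, so the added measures $\delta_s\otimes\sigma$ and $\tau\otimes\delta_s$ may be negative. This is harmless because \Cref{circuexistence} allows signed bounds and only requires $\alpha'\le\beta'$. The remaining work is the bookkeeping of the marginals on $\{s\}$ and checking the two families of cuts, which is routine.
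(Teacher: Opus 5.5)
Your reduction is the paper's argument with its two auxiliary vertices $s,t$ (and the forced return arc $t\to s$ of mass $\sigma(J)=\tau(J)$) merged into a single vertex $s$. That simplification is legitimate. Your construction of $\alpha',\beta'$, the correspondence between feasible $\sigma$-$\tau$ flows and feasible circulations on $J'$, and the computation for the cuts $Y=X$ are all correct.

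The second bullet is wrong as written. Your own computation shows that the cut $Y=X\cup\{s\}$ yields
\[
\alpha(X\times X^c)-\beta(X^c\times X)\le\tau(X^c)-\sigma(X^c).
\]
This is not the stated inequality for $X^c$. That inequality reads $\alpha(X^c\times X)-\beta(X\times X^c)\le\sigma(X^c)-\tau(X^c)$ and involves the arcs crossing in the opposite direction. For example, take $J=\{a,b\}$, $X=\{a\}$, $\sigma=\tau=0$, and $\alpha=\beta$ the unit point mass at $(a,b)$. Then the cut $\{a,s\}$ is violated, while the stated inequality for $\{b\}$ holds. What is true is that $\sigma(J)=\tau(J)$ gives $\tau(X^c)-\sigma(X^c)=\sigma(X)-\tau(X)$. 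So the cut $X\cup\{s\}$ produces exactly the same inequality as the cut $X$ and is redundant. This is where the hypothesis $\sigma(J)=\tau(J)$ is used in the cut bookkeeping, beyond the balance at $s$. With this one-line correction your proof is complete.
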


\begin{proof}
    Define $J':=J\cup \{s,t\}$ and let $\B'$ be the $\sigma$-algebra generated by $\B \cup \{\{s\},\{t\}\}$. Let 
\begin{align*}
    \alpha'(E)\; (\text{w.r.t. }\beta'(E)):=\left\{
    \begin{aligned}
        &\alpha(E)&& (\text{w.r.t. }\beta(E)),\;& &E\subseteq J\times J;\\
        &\sigma(X)&& (\text{w.r.t. }\sigma(X)),\; & &E=\{s\}\times X\subseteq\{s\}\times J;\\
        &\tau(X)&& (\text{w.r.t. }\tau(X)),\;& &E=X\times \{t\}\subseteq J\times \{t\};\\
        &0&& (\text{w.r.t. }0),\; && E\subseteq (J\times \{s\})\cup (\{t\}\times J)\cup( \{s\}\times\{t\});\\
        &\sigma(J)=\tau(J)&& (\text{w.r.t. }\sigma(J)=\tau(J)),\;&& E=\{t\}\times \{s\},        
    \end{aligned} 
    \right.
\end{align*}
and extend them to ${\B'}^2$. Since $\alpha \le \beta$, we have $\alpha'\le \beta'$. 
Apply~\Cref{circuexistence} with $J'$, $\B'$, $\alpha'$ and $\beta'$ in place of $J$, $\B$, $\alpha$ and $\beta$, such flow exists if and only if for any $X\in \B'$, we have $\alpha'(X\times X^c)\leq \beta'(X^c\times X)$.
Since $\sigma(J)=\tau(J)$, it is easy to check that this is equivalent to saying for any $X\subseteq J$, we have
$$\phantom{ssssssssssssssssssssssssss}\alpha(X\times X^c)-\beta(X^c\times X)\leq \sigma(X)-\tau(X).\phantom{ssssssssssssssssssssssssss}\qedhere$$
\end{proof}

Two extra lemmas are needed.

\begin{lem}[{\cite[Lemma 3.4]{lovasz2021flows}}]
    \label{thm:separatinghyperplane}
    Let $K_{1},\dots,K_{n}$ be open convex sets in a Banach space $(X, \| \cdot \|)$. Then $K_{1}\cap \cdots \cap K_{n}=\emptyset$ if and only if there are bounded linear functionals $\mathcal{L}_{1},\dots,\mathcal{L}_{n}$ on $X$ and real numbers $a_{1},\dots,a_{n}$ such that $\mathcal{L}_{1}+\cdots +\mathcal{L}_{n}=0$, $a_{1}+\cdots+a_{n}=0$, and for each $i$, either $\mathcal{L}_{i}=0$ and $a_{i}=0$, or $\mathcal{L}_{i}(x)>a_{i}$ for $x \in K_{i}$, and for at least one $i$ the second possibility holds.
\end{lem}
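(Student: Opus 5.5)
This is the standard Dubovitskii–Milyutin-type separation theorem for finitely many open convex sets. I would prove the two directions separately, with the "only if" direction obtained by reducing to a single application of Hahn–Banach in the product space $X^{n-1}$.

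\emph{Sufficiency.} Suppose the functionals and constants exist as stated, and assume for contradiction that some $x\in K_1\cap\dots\cap K_n$. For each $i$ we have $\mathcal{L}_i(x)\ge a_i$. This holds trivially when $\mathcal{L}_i=0$ and $a_i=0$, and otherwise the strict inequality $\mathcal{L}_i(x)>a_i$ holds. For at least one $i$ the inequality is strict. Summing over $i$ gives
\[
0=\sum_i\mathcal{L}_i(x)>\sum_i a_i=0,
\]
which is a contradiction.

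\emph{Necessity.} I would argue by induction-free reduction. First dispose of the trivial case where some $K_i$ is empty: take $\mathcal{L}_i=0$ and $a_i=-1$, take $\mathcal{L}_j=0$ and $a_j=1$ for one other index $j$, and set all remaining pairs to zero. The condition "$\mathcal{L}_i(x)>a_i$ for $x\in K_i$" then holds vacuously for $i$, and holds for $j$ because $0>-1$ fails only if... so I would instead arrange the signs so that $0>a$ holds on the nonempty $K_j$: choose $a_j=-1$, $a_i=1$, $\mathcal{L}=0$ for both. Then $i$ is vacuous, $j$ gives $0>-1$, and the sum of the constants is $0$. (If $n=1$, $K_1=\emptyset$ and the pair $\mathcal{L}_1=0$, $a_1=0$ is not allowed as "strict", but the condition $0>0$ on the empty set holds vacuously, so this case is fine as well.)

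Now assume every $K_i$ is nonempty and $n\ge 2$. In $Y=X^{n-1}$, equipped with the max norm, consider the set
\[
C=\{(x_1-x_n,\dots,x_{n-1}-x_n): x_i\in K_i\}.
\]
This set is convex, nonempty, and open, since it is a union of translates of the open set $K_1\times\dots\times K_{n-1}$. The sets have empty intersection exactly when $0\notin C$. Hahn–Banach then gives a nonzero bounded functional $\Lambda$ on $Y$ with $\Lambda>0$ on $C$. For an open set it is strict: $\Lambda\ge 0$ on $C$, and since $\Lambda(C)$ is open and nonzero, we get $\Lambda>0$.

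Write $\Lambda(y)=\sum_{i<n}\mathcal{L}_i(y_i)$ and set $\mathcal{L}_n=-\sum_{i<n}\mathcal{L}_i$. Then $\sum_i\mathcal{L}_i(x_i)>0$ for all choices $x_i\in K_i$. Define $a_i=\inf_{K_i}\mathcal{L}_i$. Each $a_i$ is finite, because the sum $\sum_i\mathcal{L}_i(x_i)$ is bounded below by $0$ and every other $K_j$ is nonempty. Moreover $\sum_i a_i\ge 0$.

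When $\mathcal{L}_i\ne 0$, the image $\mathcal{L}_i(K_i)$ is open, so the infimum is not attained and $\mathcal{L}_i>a_i$ on $K_i$. When $\mathcal{L}_i=0$ we have $a_i=0$. Finally, subtract the slack $s=\sum_i a_i\ge 0$ from a single $a_k$ with $\mathcal{L}_k\ne 0$; such a $k$ exists because $\Lambda\ne 0$. This keeps $\mathcal{L}_k>a_k-s$ on $K_k$ and makes the constants sum to zero.

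The main obstacle is the bookkeeping in the final step. One has to check that the infima are finite, that the inequalities are strict because the images of the open sets are open, and that the normalisation of the constants preserves every required property.
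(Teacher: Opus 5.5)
The paper gives no proof of this lemma; it quotes it from \cite[Lemma~3.4]{lovasz2021flows}, so there is no in-paper argument to compare against. Judged on its own, your proof is correct and complete. It is the standard argument: the difference map $(x_1,\dots,x_n)\mapsto(x_1-x_n,\dots,x_{n-1}-x_n)$ is the quotient of $X^n$ by its diagonal. Your single Hahn--Banach step is therefore equivalent to separating the open convex set $K_1\times\cdots\times K_n$ from the diagonal subspace.

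The bookkeeping that follows all checks out:
\begin{enumerate}
\item $a_i=\inf_{K_i}\mathcal{L}_i$ is finite because the other $K_j$ are nonempty.
\item $\sum_i a_i\ge 0$, since the infimum of a sum of independent terms is the sum of the infima.
\item The inequality is strict when $\mathcal{L}_i\neq 0$, because $\mathcal{L}_i(K_i)$ is then open in $\mathbb{R}$.
\item Absorbing the slack into one index $k$ with $\mathcal{L}_k\neq 0$ preserves every required property; such a $k$ exists because $\Lambda\neq 0$.
\end{enumerate}

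The only thing to clean up is the case where some $K_i$ is empty, where your write-up contains a false start. You can simply take $\mathcal{L}_k=0$ and $a_k=0$ for every $k$: for the empty $K_i$ the second alternative ``$\mathcal{L}_i(x)>a_i$ for $x\in K_i$'' holds vacuously. This is exactly the observation you already make for $n=1$, and it works for every $n$, so the two-index sign juggling is unnecessary.
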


\begin{lem}[{\cite[Lemmas~4.2 and 4.3]{lovasz2021flows}}]\label{convexseparation}
Given a standard Borel space $(J,\B)$ and a flow $\alpha\in \mBB$, then $\alpha$ is a circulation if and only if $\alpha(F)=0$ for any potential $F$ on $J\times J$.
Further, given a measure $\psi\in\mpBB$ and a bounded linear functional $\cL$ on the linear subspace $\{\mu\in\mBB\;:\: \mu\ll \psi \}$ with $\cL(c)=0$ for all circulations $c\ll \psi$, there exists a potential $F$ such that $\cL(\mu)=\mu(F)$ for all $\mu\ll\psi$.
 \end{lem}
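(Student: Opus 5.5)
The first assertion is a direct computation. If $\alpha$ is a circulation and $F(x,y)=f(x)-f(y)$ with $f$ bounded measurable, then $\alpha(F)=\alpha^1(f)-\alpha^2(f)=0$, because $\alpha^1=\alpha^2$. Conversely, suppose $\alpha(F)=0$ for every potential $F$. For $X\in\B$, take $f=\mathbf 1_X$. This gives $\alpha(X\times J)-\alpha(J\times X)=0$, that is, $\alpha^1(X)=\alpha^2(X)$ for all $X$. Hence $\alpha$ is a circulation.

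For the second assertion, I will first represent $\cL$ by a kernel and then show that the kernel agrees $\psi$-a.e. with a potential.

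\emph{Step 1 (kernel representation).} By Radon--Nikodym, the subspace $\{\mu\ll\psi\}$ is isometric to $L^1(\psi)$ via $\mu\mapsto \mathrm d\mu/\mathrm d\psi$. Since $\psi$ is finite, the dual of $L^1(\psi)$ is $L^\infty(\psi)$. So there is a bounded measurable $G\colon J\times J\to\R$ with $\cL(\mu)=\mu(G)$ for all $\mu\ll\psi$. The hypothesis says $c(G)=0$ for every circulation $c\ll\psi$. It therefore suffices to find a bounded measurable $f$ such that $G(x,y)=f(x)-f(y)$ for $\psi$-a.e.\ $(x,y)$.

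\emph{Step 2 (factoring through the boundary map).} Put $\pi:=\psi^1+\psi^2$ and consider $T\colon \mu\mapsto \mu^1-\mu^2$, which maps $\{\mu\ll\psi\}$ into $L^1(\pi)$. Its kernel is exactly the set of circulations $\ll\psi$, so $\cL$ vanishes on $\ker T$ and factors as $\cL=\ell\circ T$ for a linear $\ell$ on the range of $T$. If $\ell$ is bounded, Hahn--Banach extends it to all of $L^1(\pi)$. It is then represented by some $g\in L^\infty(\pi)$, and
\[
\cL(\mu)=\mu^1(g)-\mu^2(g)=\mu(F),\qquad F(x,y)=g(x)-g(y),
\]
after modifying $g$ on a $\pi$-null set to make it an everywhere bounded Borel function. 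This yields the desired potential.

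\emph{Step 3 (boundedness of $\ell$, the main obstacle).} We need $|\mu(G)|\le C\|\mu^1-\mu^2\|$ for all $\mu\ll\psi$. Since $\mu(G)$ is unchanged by subtracting any circulation $c\ll\psi$, it is enough to show
\[
\inf\{\|\mu-c\| : c\ll\psi \text{ a circulation}\}\le C\|\mu^1-\mu^2\|.
\]
This is not automatic: a measurable flow decomposition into cycles and paths could contain long paths, whose mass is not controlled by the boundary.

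If this estimate fails, I would build $f$ directly instead. Because $G$ integrates to $0$ over every circulation supported by $\psi$, the integral of $G$ along any finite chain of $\psi$-edges depends only on its endpoints. Moreover, by testing $G$ against $2$-cycles $\nu+\nu^*$ for $\nu\ll\psi\wedge\psi^*$, $G$ is $\psi$-a.e.\ antisymmetric where the chain goes both ways. I would then choose base points measurably within the components generated by $\psi$ and define $f(x)$ as the integral of $G$ along a chain from the base point to $x$. The delicate points are making this construction Borel and bounded, possibly after discarding a null set. For boundedness, I would use $\|G\|_\infty$ together with the fact that any excess along a path can be cancelled against a circulation built through Lemma~\ref{compactmeasure}.
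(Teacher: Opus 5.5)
Your proof of the first assertion and your Steps 1--2 are correct. Step 3, however, is a genuine gap, and the fallback cannot close it, because the estimate you isolate is necessary, not merely sufficient. If $\mathcal{L}(\mu)=\mu(F)$ with $F(x,y)=f(x)-f(y)$ and $|f|\le D$, then $|\mathcal{L}(\mu)|=|(\mu^1-\mu^2)(f)|\le D\,\|\mu^1-\mu^2\|$. So whenever the estimate fails, no bounded potential exists, whatever construction you use. (Lemma~\ref{compactmeasure} is a compactness statement; it produces no circulations to cancel against.)

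Your worry about long paths is exactly what happens under the hypothesis as literally stated, where $\mathcal{L}$ need only vanish on circulations $c\ll\psi$. Take $J=\mathbb{N}$ with the discrete $\sigma$-algebra, $\psi=\sum_{n\ge0}2^{-n}\delta_{(n,n+1)}$, and $\mathcal{L}(\mu)=\mu(J\times J)$. Every $\mu\ll\psi$ has the form $\sum_n a_n\delta_{(n,n+1)}$ with $\sum_n|a_n|<\infty$. The only circulation among these is $0$, and $|\mathcal{L}(\mu)|\le\|\mu\|$. Yet testing with $\mu=\delta_{(n,n+1)}$ forces $f(n)-f(n+1)=1$ for all $n$, so $f$ is unbounded; your chain construction returns $f(n)=\pm n$. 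Hence the second assertion, read literally, is false, and no completion of your argument exists.

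The paper does not prove this lemma; it only cites it. Where it is used, in the proof of Theorem~\ref{thm:minicostsubmo}, the functional $\mathcal{L}_1$ is bounded on all of $\mathcal{M}(\mathcal{B}^2)$ and vanishes on every circulation. Under such a hypothesis your Step 3 is one line. It suffices that $\mathcal{L}$ be bounded on $\{\mu\ll\psi'\}$ and vanish on circulations $c\ll\psi'$, where $\pi:=\psi^1+\psi^2$ and $\psi':=\psi+\pi\otimes\pi$.

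For $\mu\ll\psi$, let $\sigma:=\mu^2-\mu^1=\sigma_+-\sigma_-$ (Jordan decomposition), $m:=\sigma_+(J)=\sigma_-(J)$, and $\rho:=m^{-1}\sigma_+\otimes\sigma_-$ (or $\rho:=0$ if $m=0$). Then:
\begin{itemize}
\item $\rho^1-\rho^2=\sigma$, so $\mu+\rho$ is a circulation;
\item $\rho\ll\pi\otimes\pi$ because $\mu^1,\mu^2\ll\pi$, so $\mu+\rho\ll\psi'$;
\item $\|\rho\|=m\le\|\mu^1-\mu^2\|$.
\end{itemize}
Therefore $|\mathcal{L}(\mu)|=|\mathcal{L}(\rho)|\le\|\mathcal{L}\|\,\|\mu^1-\mu^2\|$. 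Your Step 2 (Hahn--Banach into $L^1(\pi)$, then representation by some $g\in L^\infty(\pi)$) then gives a potential with $|f|\le\|\mathcal{L}\|$.

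The missing ingredient is thus the supply of ``return arcs'' $\pi\otimes\pi$ on which $\mathcal{L}$ must also vanish. These let every boundary be cancelled at a cost bounded by its norm, which is impossible on a cycle-free support like the one above.
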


\subsection{Submodular set functions}\label{Submodularpre}
\begin{defi}
    A \emphd{lattice} is a pair $(J,\B)$ such that $J$ is a set and $\B$ is a family of subsets of $J$ closed under finite union and intersection. 
    A \emphd{set algebra} is a lattice containing $J$ and closed under taking complement.
    A \emphd{submodular set function} is a function $f\colon \B\to\R$ such that for all $A,B\in \B$,  $$f(A)+f(B)\geq f(A\cap B)+f(A\cup B).$$
    It is \emphd{supermodular} if the inequality is reversed, and \emphd{modular} if the equality holds.
    A \emphd{charge} is a finitely additive set function. Note that a modular set function $\mu$ is a charge if and only if $\mu(\emptyset)=0$.
    A set function $\varphi$ is called \emphd{continuous} if for any $A\in \B$ and any sequence $\{A_n\}_{n=1}^\infty\subseteq \B$ such that $\lim_{n\to\infty}\mathbf{1}_{A_n}(x)=\mathbf{1}_A(x)$ for any $x\in J$, we have $\lim_{n\to\infty}\varphi(A_n)=\varphi(A)$.
\end{defi}

In this paper, we focus on bounded continuous submodular set functions $\varphi$ on a standard Borel space $(J,\B)$ with $\varphi(\emptyset)=0$; in most applications we also assume $\varphi(J)=0$.

\begin{defi}
     Let $\varphi$ be a submodular function on a standard Borel space $(J,\B)$ with $\varphi(\emptyset)=0$. The set of \emphd{basic minorizing charges}, denoted by $\bmm(\varphi)$, is defined by
     $$\bmm(\varphi):=\{\beta\;:\;\text{$\beta$ is a charge, } \beta\leq \varphi, \beta(J)=\varphi(J)\},$$
     and $\bmm_+(\varphi)$ is defined by
     $$\bmm_+(\varphi):=\{\beta\;:\;\text{$\beta$ is a charge, }0\leq \beta\leq \varphi, \beta(J)=\varphi(J)\}.$$
\end{defi}

The following \Cref{lem:countablyadditive} is folklore and similar statements can be found in~\cite{gilboa2004uncertainty}. For the sake of completeness we give a proof. We first introduce two properties. For a subset $K\subseteq \mBB$, $K$ is called \emphd{uniformly countably additive} if for any decreasing sequence $\{E_i\}_{i=1}^\infty \subset \B$ with $\bigcap_{i=1}^\infty E_i=\emptyset$ and any $\epsilon>0$, there exists $N_\epsilon$ such that for any $n\geq N_\epsilon$ and $\mu\in K$, we have $|\mu(E_n)|<\epsilon$; $K$ is called \emphd{uniformly absolutely continuous} with respect to some $\lambda\in \mpBB$ if for any $\epsilon>0$ there exists $\delta>0$ such that for any $E\in \B$ with $\lambda(E)<\delta$ we have $\sup_{\mu\in K}|\mu(E)|<\epsilon$. These two properties are equivalent as stated below.

\begin{thm}[{\cite[IV, Chapter 9, Theorems~1 and 2]{Nelson1988Linear}}]\label{thm:bmmweakcompact}
    Given a standard Borel space $(J,\B)$, let $K$ be a bounded subset of $\mBB$ in the total variation norm. Then $K$ is uniformly countably additive if and only if there exists $\lambda\in\mpBB$ such that $K$ is uniformly absolutely continuous with respect to $\lambda$.
\end{thm}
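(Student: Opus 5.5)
The direction from uniform absolute continuity to uniform countable additivity is immediate. Suppose $K$ is uniformly absolutely continuous with respect to $\lambda\in\mpBB$. Let $\{E_i\}$ be a decreasing sequence with $\bigcap_i E_i=\emptyset$ and let $\epsilon>0$. Countable additivity of the single finite measure $\lambda$ gives $\lambda(E_n)\to0$. So with $\delta$ chosen for $\epsilon$, there is $N_\epsilon$ with $\lambda(E_n)<\delta$ for all $n\ge N_\epsilon$. Then $\sup_{\mu\in K}|\mu(E_n)|<\epsilon$ for these $n$.

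For the converse I would build $\lambda$ from countably many members of $K$, following the Bartle--Dunford--Schwartz argument.

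\emph{Step 1.} I first upgrade the hypothesis from $\mu$ to $|\mu|$ and to subsets. Since $|\mu|(E)\le 2\sup_{A\subseteq E}|\mu(A)|$, it suffices to show $\sup_{\mu\in K}\sup_{A\subseteq E_n}|\mu(A)|\to0$. I argue by contradiction. Otherwise there are $\mu_k\in K$, indices $n_k\to\infty$ and sets $A_k\subseteq E_{n_k}$ with $|\mu_k(A_k)|\ge\epsilon$. Using countable additivity of each individual $\mu_k$ and passing to a subsequence, I can shrink the $A_k$ to become pairwise disjoint while keeping $|\mu_k(A_k)|\ge\epsilon/2$. For disjoint $A_k$ with $A_k\subseteq E_{n_k}$, the tails $\bigcup_{j\ge k}A_j$ decrease to $\emptyset$. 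A further sliding-hump refinement, using boundedness of $K$ in norm, then contradicts uniform countable additivity.

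\emph{Step 2, the main obstacle.} For each $\epsilon>0$ I want finitely many $\mu_1,\dots,\mu_k\in K$ and $\delta>0$ such that $\sum_i|\mu_i|(E)<\delta$ implies $\sup_{\mu\in K}|\mu(E)|<\epsilon$. Suppose this fails for some $\epsilon$. Then I can inductively choose $\nu_n\in K$ and $E_n\in\B^2$ with
\[
\sum_{j<n}|\nu_j|(E_n)<2^{-n}\qquad\text{and}\qquad |\nu_n(E_n)|\ge\epsilon .
\]
Set $F_n=\bigcup_{m\ge n}E_m$ and $G=\bigcap_n F_n$. For each fixed $j$ we have $|\nu_j|(F_{j+1})\le\sum_{m>j}2^{-m}$, which forces $|\nu_j|(G)=0$. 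Since $F_n\setminus G$ decreases to $\emptyset$, Step 1 gives $\sup_{\mu\in K}|\mu|(F_n\setminus G)\to0$. On the other hand,
\[
|\nu_n(E_n)|\le|\nu_n|(E_n\setminus G)+|\nu_n|(G)\le|\nu_n|(F_n\setminus G),
\]
which tends to $0$, contradicting $|\nu_n(E_n)|\ge\epsilon$. Making this bookkeeping precise, in particular controlling $|\nu_n|(G)$ uniformly, is where I expect the real work to lie. If it proves troublesome, I would replace $G$ by the sets $F_n\setminus F_{N}$ for large $N$.

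\emph{Step 3.} Apply Step 2 with $\epsilon=1/m$ for each $m\ge1$, and collect all the chosen measures into a countable family $\{\mu_i\}$. Define
\[
\lambda:=\sum_i 2^{-i}\frac{|\mu_i|}{1+\|\mu_i\|}\in\mpBB .
\]
If $\lambda(E)$ is small enough, then the finitely many $|\mu_i|(E)$ attached to level $m$ are each small, and hence $\sup_{\mu\in K}|\mu(E)|<1/m$. This is exactly uniform absolute continuity of $K$ with respect to $\lambda$.
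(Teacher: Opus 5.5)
Your proof is correct, and it takes a more direct route than the paper. The paper gives no argument of its own; it cites Dunford--Schwartz IV.9, Theorems~1 and~2, which are phrased through weak sequential compactness in $\M(\B^2)$. For bounded $K$, uniform countable additivity is equivalent to relative weak sequential compactness, and a weakly compact set admits a control measure. You bypass compactness altogether. Step~1 upgrades the hypothesis to $\sup_{\mu\in K}|\mu|(E_n)\to0$. Step~2 is the Bartle--Dunford--Schwartz finite-family lemma, and Step~3 is the standard $\sum_i 2^{-i}|\mu_i|/(1+\|\mu_i\|)$ control measure. The citation is short and places the result in the weak-compactness framework, but that framework needs $K$ bounded. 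Your argument is self-contained and elementary, and it never uses the boundedness of $K$. The equivalence in fact holds without it: all real multiples of a fixed point mass satisfy both properties.

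Both places you flagged are already fine. In Step~2 no uniform control is needed. For fixed $j$ and every $n>j$, your construction gives $|\nu_j|(F_n)\le\sum_{m\ge n}|\nu_j|(E_m)<2^{1-n}$. Hence $|\nu_j|(G)=0$ exactly for every $j$, your displayed chain already gives the contradiction, and the fallback via $F_n\setminus F_N$ is unnecessary. One small adjustment: choose $\nu_n$ with $|\nu_n(E_n)|\ge\epsilon/2$, since the supremum need not be attained.

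In Step~1 the final refinement uses only countable additivity of each individual $\mu_k$, not boundedness of $K$. Pass to a further subsequence with $|\mu_k|\bigl(\bigcup_{j>k}A_j\bigr)<\epsilon/4$. Then $\bigl|\mu_k\bigl(\bigcup_{j\ge k}A_j\bigr)\bigr|\ge\epsilon/4$ for all $k$, while $\bigcup_{j\ge k}A_j$ decreases to $\emptyset$. This contradicts uniform countable additivity.
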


Now we can state and prove the following lemma.

\begin{lem}\label{lem:countablyadditive}
Let $(J,\B)$ be a standard Borel space and let $\varphi$ be a bounded continuous submodular set function on $\B$ with $\varphi(\emptyset)=0$.
Then any charge $\alpha\in\bmm(\varphi)$ is countably additive, and hence $\alpha\in\mB$.
Moreover, there exists some $\lambda_\varphi\in \mpB$ such that $\alpha\ll\lambda_\varphi$ for any $\alpha\in \bmm(\varphi)$.
\end{lem}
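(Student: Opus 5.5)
Fix $\alpha\in\bmm(\varphi)$. Since $\alpha$ is a charge with $\alpha(J)=\varphi(J)$ and $\alpha\le\varphi$, we get two-sided control: $\alpha(X)\le\varphi(X)$ and
\[
\alpha(X)=\varphi(J)-\alpha(X^c)\ge \varphi(J)-\varphi(X^c).
\]
Thus $\varphi(J)-\varphi(X^c)\le\alpha(X)\le\varphi(X)$ for every $X\in\B$. Boundedness of $\varphi$, say $|\varphi|\le M$, then gives $|\alpha(X)|\le 2M$ uniformly over all $\alpha\in\bmm(\varphi)$ and all $X$. So $\bmm(\varphi)$ is a bounded family of bounded charges; once countable additivity is shown it is a bounded subset of $\mB$ in total variation norm, since $\|\alpha\|\le 4M$.

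For countable additivity, by finite additivity it suffices to show $\alpha(E_n)\to 0$ for every decreasing sequence $E_n\in\B$ with $\bigcap_n E_n=\emptyset$. Then $\mathbf 1_{E_n}\to \mathbf 1_\emptyset$ and $\mathbf 1_{E_n^c}\to\mathbf 1_J$ pointwise, so continuity of $\varphi$ gives $\varphi(E_n)\to\varphi(\emptyset)=0$ and $\varphi(E_n^c)\to\varphi(J)$. The sandwich above becomes
\[
\varphi(J)-\varphi(E_n^c)\le\alpha(E_n)\le\varphi(E_n),
\]
and both bounds tend to $0$. Hence $\alpha$ is countably additive, so $\alpha\in\mB$. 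The bounds do not depend on $\alpha$, so the same argument shows the stronger statement: for every $\epsilon>0$ there is $N_\epsilon$ with $\sup_{\alpha\in\bmm(\varphi)}|\alpha(E_n)|<\epsilon$ for all $n\ge N_\epsilon$. That is, $\bmm(\varphi)$ is uniformly countably additive.

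For the dominating measure, apply \Cref{thm:bmmweakcompact} to the bounded set $K=\bmm(\varphi)$. The theorem is stated for $\mBB$, but it is a general fact about $\sigma$-algebras in Dunford--Schwartz, and $(J,\B)$ is itself standard Borel. If one insists on the literal statement, transport each $\alpha$ to the diagonal $\Delta_J$ via the pushforward $x\mapsto(x,x)$, which preserves uniform countable additivity, and then take the first marginal of the resulting $\lambda$. This yields $\lambda_\varphi\in\mpB$ such that $\bmm(\varphi)$ is uniformly absolutely continuous with respect to $\lambda_\varphi$. In particular $\lambda_\varphi(X)=0$ forces $|\alpha(X)|<\epsilon$ for every $\epsilon$, so $\alpha\ll\lambda_\varphi$.

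The main point requiring care is the first step: deriving the lower bound via the complement, and checking that continuity applies to $E_n^c\to J$. The rest is routine. An empty $\bmm(\varphi)$ makes the statement trivial, with $\lambda_\varphi=0$.
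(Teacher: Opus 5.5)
Your proof is correct and follows the paper's argument. The sandwich $\varphi(J)-\varphi(X^c)\le\alpha(X)\le\varphi(X)$, combined with continuity of $\varphi$, gives uniform countable additivity and a uniform norm bound, and then \Cref{thm:bmmweakcompact} produces $\lambda_\varphi$.

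Two of your choices are slightly cleaner than the paper's. Testing countable additivity directly on decreasing sequences $E_n\downarrow\emptyset$ avoids the paper's unnecessary detour through the existence of $\lim_k\alpha(Y_k)$. Your diagonal pushforward also correctly handles the fact that the cited theorem is stated for $\mBB$ rather than $\mathcal{M}(\B)$.
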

\begin{proof}
    Let $X\in\B$ and $\{X_n\}_{n=1}^\infty\subseteq \B$ be pairwise disjoint with $X=\bigsqcup_{n=1}^\infty X_n$. Let $$Y_k:=\bigsqcup_{n=1}^k X_n,\qquad Z_k:=\bigsqcup_{n=k+1}^\infty X_n.$$
    Since $\B$ is a $\sigma$-algebra, $Y_k,Z_k\in\B$.
    Since $\varphi$ is continuous, we have $\lim_{k\to\infty}\varphi(Z_k)=\varphi(\emptyset)=0$ and $\lim_{k\to\infty}\varphi(Z_k^c)=\varphi(J)$. 
    
    For every $k$, finite additivity gives $\alpha(X)=\alpha(Y_k)+\alpha(Z_k)$. Moreover,
    \[
    \varphi(J)-\varphi(Z_k^c)\le \varphi(J)-\alpha(Z_k^c)=\alpha(Z_k)\le\varphi(Z_k),
    \]
    so $\lim_{k\to\infty}\alpha(Z_k)=0$. Hence if $\lim_{k\to\infty}\alpha(Y_k)$ exists, then
    \[
    \alpha(X)=\lim_{k\to\infty}\alpha(Y_k).
    \]
    
    It remains to show that this limit always exists. Suppose not. There exists $\epsilon>0$ and a subsequence $\{Y_{n_k}\}$ such that $$|\alpha(Y_{n_{2k+1}})-\alpha(Y_{n_{2k+2}})|>\epsilon, \quad k\geq 1.$$
    Let $W_k:=Y_{n_{2k+2}}\setminus Y_{n_{2k+1}}$. Then
    $\liminf_{k\to\infty} |\alpha(W_k)|>\epsilon$.
    On the other hand, $$ \varphi(J)-\varphi(W_k^c)\leq \varphi(J)-\alpha(W_k^c) = \alpha(W_k)\leq \varphi(W_k).$$
    By continuity of $\varphi$, both the left and right bounds tend to $0$, so $\lim_{k\to\infty}\alpha(Z_k)=0$, a contradiction. Thus $\alpha$ is countably additive.

    Let $M:=\sup_{X\in\B}|\varphi(X)|<\infty$. For every $X\in\B$, $\varphi(J)-\varphi(X^c)\leq \alpha(X)\leq \varphi(X)$, and hence $-2M\le\alpha(X)\le M$. Therefore
    $\|\alpha\|=\sup_X\alpha(X)-\inf_X\alpha(X)\le3M$, so $\bmm(\varphi)$ is bounded in the total variation norm.
    
    Finally, let $\{E_i\}_{i=1}^\infty \subset \B$ be decreasing such that $\bigcap_{i=1}^\infty E_i=\emptyset$ and $\epsilon>0$. Since $E_n^c\uparrow J$ pointwise, continuity of $\varphi$ gives $\varphi(E_n)\to0$ and $\varphi(E_n^c)\to\varphi(J)$. The same two-sided estimate gives
    \[
    \sup_{\alpha\in\bmm(\varphi)}|\alpha(E_n)|\to0.
    \] 

By \Cref{thm:bmmweakcompact}, there exists $\lambda_\varphi\in\mpB$ such that $\bmm(\varphi)$ is uniformly absolutely continuous with respect to $\lambda_\varphi$. In particular, every $\alpha\in\bmm(\varphi)$ satisfies $\alpha\ll\lambda_\varphi$.
\end{proof}

By \Cref{lem:countablyadditive}, if $\varphi$ is bounded and continuous, then $\bmm(\varphi), \bmm_+(\varphi) \subseteq \mB$.

We also need the following two basic properties about submodular set functions.

\begin{thm}[{\cite[Theorem 5.1]{lovasz2023submodular}}]\label{separation}
    Let $(J,\B)$ be a set algebra and $\alpha,\beta$ be bounded set functions on $(J,\B)$, where $\alpha$ is supermodular and $\beta$ is submodular. If $\alpha\leq \beta$, then there exists a modular set function $\gamma$ on $(J,\B)$ such that $\alpha\leq \gamma\leq \beta$ and $\gamma(J)=\beta(J)$.
\end{thm}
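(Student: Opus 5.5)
The plan is to prove the finite case first and then pass to the general set algebra by a compactness (Tychonoff) argument. The finite case is the classical discrete sandwich theorem of Frank.

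\textbf{Finite case.} Let $\mathcal A\subseteq\B$ be a finite subalgebra with atoms $P_1,\dots,P_k$. Then $\mathcal A$ is isomorphic to $2^{[k]}$, and $\alpha|_{\mathcal A}$, $\beta|_{\mathcal A}$ are a supermodular and a submodular function on $2^{[k]}$ with $\alpha\le\beta$. A modular function on $2^{[k]}$ has the form $\gamma(X)=c+\sum_{i\in X}x_i$. I would normalize by setting $c:=\beta(\emptyset)$, $\beta'=\beta-\beta(\emptyset)$ and $\alpha'=\alpha-\beta(\emptyset)$. Then $\alpha'(\emptyset)\le0=\beta'(\emptyset)$, and we need a vector $x$ with
\[
x(J)=\beta'(J),\qquad x(X)\le\beta'(X),\qquad x(X)\ge\alpha'(X)\quad\text{for all }X.
\]
This asks whether the base polytope of $\beta'$ meets the contra-polymatroid $\{x: x\ge\alpha'\}$. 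This is Frank's discrete separation theorem. One can also derive it from Edmonds' polymatroid intersection theorem or from Lovász extensions: the extension $\hat\beta'$ is convex, $\hat\alpha'$ is concave, and $\hat\alpha'\le\hat\beta'$ on the cube by checking chains. A finite-dimensional separating affine function then gives a linear $x$ with $\hat\alpha'\le x\le\hat\beta'$, and equality at $\mathbf 1$ is obtained by shifting. I expect the equality $\gamma(J)=\beta(J)$ together with $\alpha(\emptyset)\le\gamma(\emptyset)\le\beta(\emptyset)$ to be the main delicate point. Putting $\gamma(\emptyset)=\beta(\emptyset)$ makes both hold simultaneously, because $\alpha'(\emptyset)\le 0$ and $\alpha'(J)\le\beta'(J)$.

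\textbf{Compactness.} Let $m:=\inf\alpha$ and $M:=\sup\beta$; both are finite since $\alpha,\beta$ are bounded. Consider the compact product space $\Omega:=\prod_{X\in\B}[m,M]$. For each finite subalgebra $\mathcal A$, let $S_{\mathcal A}\subseteq\Omega$ be the set of $\gamma$ that satisfy three conditions:
\begin{itemize}
\item $\alpha\le\gamma\le\beta$ on all of $\B$;
\item $\gamma(J)=\beta(J)$;
\item $\gamma$ is modular on $\mathcal A$.
\end{itemize}
Each condition involves finitely many coordinates, so $S_{\mathcal A}$ is closed.

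It is also nonempty. Extend the finite-case $\gamma_{\mathcal A}$ to $\B$ by setting $\gamma(X):=\alpha(X)$ for $X\notin\mathcal A$, which is allowed since only the bounds are required off $\mathcal A$. We also have $S_{\mathcal A}\cap S_{\mathcal A'}\supseteq S_{\sigma(\mathcal A\cup\mathcal A')}$, and the generated algebra is again finite. Hence the family $\{S_{\mathcal A}\}$ has the finite intersection property, and Tychonoff yields some $\gamma\in\bigcap_{\mathcal A}S_{\mathcal A}$.

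Any two sets $A,B\in\B$ lie in a common finite subalgebra. So this $\gamma$ satisfies $\gamma(A)+\gamma(B)=\gamma(A\cap B)+\gamma(A\cup B)$, i.e.\ it is modular on $\B$, with $\alpha\le\gamma\le\beta$ and $\gamma(J)=\beta(J)$. The hard step is the finite separation theorem; the limit step is routine.
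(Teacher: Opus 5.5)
The paper does not prove this statement; it quotes it from Lovász, so there is no in-paper argument to compare against. Judged on its own, your route is correct: Frank's discrete sandwich theorem on each finite subalgebra, then a finite-intersection argument in the Tychonoff cube $\prod_{X\in\B}[\inf\alpha,\sup\beta]$. The sets $S_{\mathcal A}$ are closed and nonempty. Finitely many of them contain $S_{\mathcal A'}$, where $\mathcal A'$ is the finite subalgebra they generate. A point in the full intersection is modular, because any two sets $A,B\in\B$ lie in a common finite subalgebra. The approach needs nothing beyond the finite theorem and compactness, with no continuity or measure theory. This matches the generality of the statement: an arbitrary set algebra, with a merely finitely additive output.

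Two points should be tightened.

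\textbf{The base condition in the finite case.} In its usual form, Frank's theorem assumes $p(\emptyset)=b(\emptyset)=0$ and only yields a vector $x$ with $p\le x\le b$. It does not give $x(J)=b(J)$. Your remark that equality at $\mathbf 1$ ``is obtained by shifting'' is unjustified as stated. Adding a constant to the separator breaks $\gamma\le\beta'$ at $\emptyset$. Raising coordinates until a base is reached needs a separate polymatroid-augmentation argument.

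The clean fix is to apply the finite theorem to $\alpha''$, defined by
\[
\alpha''(\emptyset):=0,\qquad \alpha''(J):=\beta'(J),\qquad \alpha'':=\alpha' \text{ elsewhere.}
\]
This function is still supermodular. In the inequality $\alpha''(A)+\alpha''(B)\le\alpha''(A\cap B)+\alpha''(A\cup B)$, both sides coincide when $A$ or $B$ lies in $\{\emptyset,J\}$; otherwise only the right-hand side has increased. It is still $\le\beta'$, precisely because $\alpha'(\emptyset)\le 0$ and $\alpha'(J)\le\beta'(J)$. Any modular function sandwiched between $\alpha''$ and $\beta'$ is then forced to vanish at $\emptyset$ and to equal $\beta'(J)$ at $J$. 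So it is a vector in the base polytope of $\beta'$ lying above $\alpha'$. The same normalization also makes your Lovász-extension variant work on the cube.

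\textbf{Closedness of $S_{\mathcal A}$.} The first condition defining $S_{\mathcal A}$ involves all coordinates, not finitely many. It is, however, an intersection of the closed slabs $\{\gamma:\alpha(X)\le\gamma(X)\le\beta(X)\}$, so $S_{\mathcal A}$ is still closed.

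With these adjustments the proof is complete.
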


    \begin{thm}[{\cite[Theorem 5.5]{lovasz2023submodular}}]\label{strongerseparation}
    Let $\varphi$ be a bounded submodular set function on a set algebra $(J,\mathcal{B})$ such that $\varphi(\emptyset)=0$, and let $\mathcal{L}\subseteq \mathcal{B}$ be a lattice containing $\emptyset$ and $J$. Assume that $\varphi$ is modular on $\mathcal{L}$. Then there exists $\alpha \in \bmm (\varphi)$ such that $\alpha(S)=\varphi(S)$ for every $S\in \mathcal{L}$. If $\varphi$ is increasing, then $\alpha\geq 0$.
    \end{thm}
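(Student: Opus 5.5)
The plan is to reduce to finite subalgebras, solve the finite case with a greedy construction ordered compatibly with $\mathcal L$, and pass to the limit by compactness. Let $M:=\sup_{X\in\B}|\varphi(X)|$. Any $\alpha\in\bmm(\varphi)$ satisfies
\[
\varphi(J)-\varphi(X^c)\le\alpha(X)\le\varphi(X),
\]
so $\bmm(\varphi)$ lies in the compact product space $[-2M,M]^{\B}$.

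For a finite subalgebra $\B_0\subseteq\B$ and a finite sublattice $\mathcal L_0\subseteq\mathcal L\cap\B_0$ containing $\emptyset,J$, let $C(\B_0,\mathcal L_0)$ be the set of $x\in[-2M,M]^{\B}$ such that:
\begin{itemize}
\item $x$ is additive on disjoint pairs from $\B_0$;
\item $x\le\varphi$ on $\B_0$ and $x(J)=\varphi(J)$;
\item $x=\varphi$ on $\mathcal L_0$.
\end{itemize}
Each $C(\B_0,\mathcal L_0)$ is closed, since every condition involves finitely many coordinates. The sets are directed downward: $C(\B_0\vee\B_1,\mathcal L_0\vee\mathcal L_1)\subseteq C(\B_0,\mathcal L_0)\cap C(\B_1,\mathcal L_1)$, where $\mathcal L_0\vee\mathcal L_1$ is the finite lattice generated, which still lies in $\mathcal L$. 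Hence by compactness the whole intersection is nonempty once each $C(\B_0,\mathcal L_0)$ is nonempty. Any point of the intersection is a charge in $\bmm(\varphi)$ agreeing with $\varphi$ on $\mathcal L$.

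The main step is the finite case, i.e.\ nonemptiness of $C(\B_0,\mathcal L_0)$.
\begin{enumerate}
\item Since $\mathcal L_0$ is a finite distributive lattice of sets, its elements are exactly the unions of down-sets of its join-irreducibles $j_1,\dots,j_m$. Enumerate these along a linear extension of the inclusion order.
\item Put $P_k:=j_1\cup\dots\cup j_k\in\mathcal L_0$. This gives a chain $\emptyset=P_0\subseteq P_1\subseteq\dots\subseteq P_m$. Here $P_m=J$, because $J$ is the top of $\mathcal L_0$ and hence the union of all its join-irreducibles.
\item Refine this chain to a maximal chain $\emptyset=Q_0\subsetneq\dots\subsetneq Q_r=J$ of $\B_0$, whose successive differences are the atoms $a_i:=Q_i\setminus Q_{i-1}$.
\item Define the greedy charge on $\B_0$ by $x(a_i):=\varphi(Q_i)-\varphi(Q_{i-1})$.
\end{enumerate}
Note the chain ends at $J$, so the increments telescope to $x(J)=\varphi(J)-\varphi(\emptyset)=\varphi(J)$. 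The standard greedy argument (Edmonds/Lovász) shows $x\le\varphi$ on $\B_0$. Indeed, for $X\in\B_0$, adding the atoms of $X$ in chain order and using submodularity gives
\[
\varphi(X\cap Q_i)-\varphi(X\cap Q_{i-1})\ge\varphi(Q_i)-\varphi(Q_{i-1})
\]
for each atom $a_i\subseteq X$.

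It remains to check that $x(S)=\varphi(S)$ for every $S\in\mathcal L_0$, not only for the chain members $P_k$; I expect this to be the main obstacle. I would argue as follows. Since $x$ telescopes on the chain, $x(P_k\setminus P_{k-1})=\varphi(P_k)-\varphi(P_{k-1})$. Moreover $j_k\cap P_{k-1}=j_k^-$, the union of the irreducibles strictly below $j_k$, which lies in $\mathcal L_0$. Modularity of $\varphi$ on $\mathcal L_0$ applied to $j_k$ and $P_{k-1}$ then gives
\[
\varphi(P_k)-\varphi(P_{k-1})=\varphi(j_k)-\varphi(j_k^-),
\]
a quantity that does not depend on the ordering. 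Thus $x(S)$ is the sum of $\varphi(j_k)-\varphi(j_k^-)$ over the irreducibles $j_k\subseteq S$. Running the same telescoping argument along a linear extension that lists the irreducibles below $S$ first shows that this sum equals $\varphi(S)$.

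Finally, suppose $\varphi$ is increasing. Then every greedy increment $\varphi(Q_i)-\varphi(Q_{i-1})$ is nonnegative, so we may add the closed condition $x\ge0$ on $\B_0$ to each $C(\B_0,\mathcal L_0)$. The compactness argument goes through unchanged and yields $\alpha\ge0$.
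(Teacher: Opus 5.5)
\textbf{Verdict.} The paper gives no proof of this statement to compare with. It quotes the result from \cite[Theorem~5.5]{lovasz2023submodular} and uses it only in the proof of \Cref{maxminmuF}, where $\mathcal L$ is a chain $\{A_t\}$. Judged on its own, your proof is correct; one routine step is left implicit.

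\textbf{What checks out.} Your route is the standard finite-approximation argument: Tychonoff compactness over finite subalgebras, plus Edmonds' greedy algorithm run along a linear extension of the join-irreducibles. It uses no measure theory, so it works on an arbitrary set algebra, as the statement requires. The individual steps hold:
\begin{itemize}
\item Every candidate takes values in $[-2M,M]$ on $\B_0$.
\item Each $C(\B_0,\mathcal L_0)$ is closed.
\item The family is downward directed, because the lattice generated by $\mathcal L_0\cup\mathcal L_1$ is finite and lies in $\mathcal L\cap(\B_0\vee\B_1)$.
\item A point of the intersection is a charge in $\bmm(\varphi)$ agreeing with $\varphi$ on $\mathcal L$. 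For $S\in\mathcal L$, take $\B_0=\{\emptyset,S,S^c,J\}$ and $\mathcal L_0=\{\emptyset,S,J\}$.
\item In the finite case, the identity $j_k\cap P_{k-1}=j_k^-$ and the resulting formula $\varphi(P_k)-\varphi(P_{k-1})=\varphi(j_k)-\varphi(j_k^-)$ are right.
\end{itemize}

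\textbf{The step to write out.} You pass from $x(P_k\setminus P_{k-1})=\varphi(j_k)-\varphi(j_k^-)$ directly to $x(S)=\sum_{j_k\subseteq S}\bigl(\varphi(j_k)-\varphi(j_k^-)\bigr)$. This needs $S=\bigsqcup_{j_k\subseteq S}(P_k\setminus P_{k-1})$, which you do not state. It holds for three reasons:
\begin{itemize}
\item $P_k\setminus P_{k-1}=j_k\setminus j_k^-$, and these pieces partition $J$.
\item For $S\in\mathcal L_0$, the set $S\cap j_k\in\mathcal L_0$ is the union of the irreducibles it contains. So it is either $j_k$ itself or is contained in $j_k^-$.
\item Hence $S$ meets $j_k\setminus j_k^-$ only when $j_k\subseteq S$, and in that case contains it.
\end{itemize}

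\textbf{Two simplifications.} First, the monotone clause needs no extra constraint in the compactness argument. For increasing $\varphi$, every $\alpha\in\bmm(\varphi)$ satisfies $\alpha(X)=\varphi(J)-\alpha(X^c)\ge\varphi(J)-\varphi(X^c)\ge 0$. Second, in the paper's only application $\mathcal L$ is a chain, on which every set function is modular. There the join-irreducible bookkeeping disappears: any maximal chain of $\B_0$ through the finitely many chain members gives a greedy charge that is tight on them by telescoping.
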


In the end, we introduce the \textit{Choquet integral} of submodular set functions, transforming set functions to operators on bounded measurable functions.

\begin{defi}
  Given a set algebra $(J,\B)$ and a bounded submodular set function $ \varphi $ on $(J,\B)$, for any function $ f \in \bd(\B) $, where $\bd(\B)$ denotes the set of all bounded $\B$-measurable functions on $J$, choose $C\in\R$ with $f+C\geq0$ and define the \emphd{Choquet integral} as
  $$\hat{\varphi}(f):= \int f \, \mathrm{d}\varphi := \int_0^\infty \varphi(\{f+C\geq t\}) \, \dt -C \varphi(J).$$
 This definition is independent of $C$ since
  \begin{align*}
      \int_0^\infty \varphi(\{f+C_1+C_2\geq t\}) \, \dt&=\int_0^{C_2} \varphi(\{f+C_1+C_2\geq t\}) \, \dt + \int_{C_2}^\infty \varphi(\{f+C_1+C_2\geq t\}) \, \dt\\
      &= \int_0^\infty \varphi(\{f+C_1\geq t\}) \, \dt+C_2\varphi(J).
  \end{align*}
  
\end{defi}

\section{Measurable submodular flows: existence and optimality}\label{sec3}

In the spirit of classical submodular flows, we give the definition of submodular flows on a standard Borel space as follows.

\begin{defi}[Boundary operator and cut function]
    Given a standard Borel space $(J,\B)$ and $\mu\in \mBB$, define the \emphd{boundary} of $\mu$ by
    \[\partial\mu(X):=\mu(X\times X^c)-\mu(X^c\times X), \quad X\in\mathcal{B}.\] 
  Then \(\partial\mu=\mu^1-\mu^2\), so \(\partial:\mBB \to \mB\) is a linear operator.
    For $\alpha$, $\beta\in \mBB$ with $\alpha\leq \beta$, the \emphd{cut function} $\kab$ is defined by \[\kab(X):=\beta(X\times X^c)-\alpha(X^c\times X), \quad X\in\mathcal{B}.\]
\end{defi}

\begin{defi}[Feasible submodular flows]
    Let $(J,\B)$ be a standard Borel space  and let $\varphi : \B \to \R$ be a bounded continuous submodular function with $\varphi(\emptyset)=\varphi(J)=0$, a \emphd{submodular flow with respect to $\varphi$} is a signed measure $\mu \in \mBB$ such that 
    \[\partial \mu(X) \leq \varphi(X) \qquad \textrm{ for all } X\in \B.\]
    Given $\alpha,\beta\in\mBB$ with $\alpha\leq \beta$, a flow $\mu$ is \emphd{feasible with respect to} $(\alpha,\beta)$ if $\alpha\leq \mu\leq \beta$.
    The tuple $(J,\B,\alpha,\beta,\varphi)$ is called a \emphd{submodular network system}. If $v\colon J\times J\to\R_{\geq 0}$ is bounded and measurable, the \emphd{cost with respect to} $v$ of a flow $\mu$ is defined as $$\mu(v)=\int_{J \times J}\,v(x)\,\mathrm{d}\mu(x).$$
    For simplicity, we just say $\mu$ is a \emphd{feasible submodular flow} if the system is clear.
\end{defi}

\subsection{Existence of feasible submodular flows}

In this subsection, given a submodular network system, we characterize the condition for the existence of a feasible submodular flow. We first give several lemmas about the cut function and base polytope intersection.

\begin{lem}\label{kabsubmo}
    Given a network system $(J,\B,\alpha,\beta)$ with $\alpha \le \beta$, the cut function $\kab$ is bounded, continuous, and submodular. Moreover,
\[
    \kappa_{\alpha,\beta}(\emptyset)=\kappa_{\alpha,\beta}(J)=0.
\]
\end{lem}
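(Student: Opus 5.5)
We verify the four properties of $\kab(X)=\beta(X\times X^c)-\alpha(X^c\times X)$ one at a time, directly from the fact that $\alpha$ and $\beta$ are finite signed measures on $\B^2$.

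\emph{Boundedness and the values at $\emptyset$ and $J$.} Since $|\beta(E)|\le\|\beta\|$ and $|\alpha(E)|\le\|\alpha\|$ for every $E\in\B^2$, we get $|\kab(X)|\le\|\alpha\|+\|\beta\|$. For $X=\emptyset$ or $X=J$, both $X\times X^c$ and $X^c\times X$ are empty, so $\kab(\emptyset)=\kab(J)=0$.

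\emph{Continuity.} Suppose $\mathbf 1_{A_n}\to\mathbf 1_A$ pointwise on $J$. Then $\mathbf 1_{A_n\times A_n^c}(x,y)=\mathbf 1_{A_n}(x)\bigl(1-\mathbf 1_{A_n}(y)\bigr)$ converges pointwise to $\mathbf 1_{A\times A^c}(x,y)$ on $J\times J$, and likewise for $A_n^c\times A_n$. These indicators are bounded by $1$, and $|\alpha|,|\beta|$ are finite measures. By dominated convergence, applied to the positive and negative parts separately, $\beta(A_n\times A_n^c)\to\beta(A\times A^c)$ and $\alpha(A_n^c\times A_n)\to\alpha(A^c\times A)$. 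Hence $\kab(A_n)\to\kab(A)$.

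\emph{Submodularity.} This is the main computation. Fix $A,B\in\B$ and decompose $J\times J$ into the product cells $P\times Q$, where $P,Q$ range over the four atoms $A\cap B$, $A\setminus B$, $B\setminus A$, $(A\cup B)^c$. Each of the four sets $A\times A^c$, $B\times B^c$, $(A\cap B)\times(A\cap B)^c$, $(A\cup B)\times(A\cup B)^c$ is a union of such cells. Counting multiplicities gives the identity
\[
\mathbf 1_{A\times A^c}+\mathbf 1_{B\times B^c}-\mathbf 1_{(A\cap B)\times(A\cap B)^c}-\mathbf 1_{(A\cup B)\times(A\cup B)^c}=\mathbf 1_{(A\setminus B)\times(B\setminus A)}+\mathbf 1_{(B\setminus A)\times(A\setminus B)}.
\]
This is the standard identity for the directed cut function. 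Applying it to $\beta$, and the transposed identity to $\alpha$, yields
\[
\kab(A)+\kab(B)-\kab(A\cap B)-\kab(A\cup B)=(\beta-\alpha)(E)+(\beta-\alpha)(E^*),\qquad E:=(A\setminus B)\times(B\setminus A).
\]
This is nonnegative because $\alpha\le\beta$.

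The only step needing care is the bookkeeping behind the indicator identity. I would verify it cell by cell, noting that only the cells $(A\setminus B)\times(B\setminus A)$ and $(B\setminus A)\times(A\setminus B)$ have nonzero net count. It is also the only place where the hypothesis $\alpha\le\beta$ is used.
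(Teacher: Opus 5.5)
Your proposal is correct and matches the paper's proof: boundedness from finiteness of the measures, continuity by dominated convergence against $|\alpha|$ and $|\beta|$, and submodularity via the four-atom cut identity. That identity is applied to $\beta$ and, in transposed form, to $\alpha$ (the paper instead substitutes complements $X^c,Y^c$, which amounts to the same thing), giving $(\beta-\alpha)(E)+(\beta-\alpha)(E^*)\ge 0$. Your write-up is slightly cleaner: you state the identity as an equality of indicators rather than asserting nonnegativity for an arbitrary signed measure, and you check $\kab(\emptyset)=\kab(J)=0$ explicitly.
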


\begin{proof}
Boundedness is immediate from the finiteness of $\alpha$ and $\beta$. If $\mathbf 1_{X_n}\to\mathbf 1_X$ pointwise, then $\mathbf 1_{X_n\times X_n^c}\to\mathbf 1_{X\times X^c}$ and $\mathbf 1_{X_n^c\times X_n}\to\mathbf 1_{X^c\times X}$ pointwise on $J\times J$. Dominated convergence with respect to $|\alpha|+|\beta|$ gives $\kappa_{\alpha,\beta}(X_n)\to\kappa_{\alpha,\beta}(X)$, so $\kab$ is continuous.

For submodularity, decompose $J$ according to the four regions $X\cap Y$, $X\setminus Y$, $Y\setminus X$, and $(X\cup Y)^c$. For every finite signed measure $m$ on $J\times J$, one has
\begin{align*}
    &m(X\times X^c)+m(Y\times Y^c)-m((X\cup Y)\times (X\cup Y)^c)-m((X\cap Y)\times (X\cap Y)^c)\\
    =\;& m\big( (X\setminus Y)\times (Y\setminus X)\big)+m\big((Y\setminus X)\times (X\setminus Y)\big)\ge 0.
\end{align*}
Applying this to $m=\beta$ and, with $X,Y$ replaced by $X^c,Y^c$, to $m=\alpha$, we get
\begin{align*}
    &\kab(X)+\kab(Y)-\kab(X\cup Y)-\kab(X\cap Y)\\
=\;&(\beta-\alpha)\big((X\setminus Y)\times (Y\setminus X)\big)+(\beta-\alpha)\big((Y\setminus X)\times (X\setminus Y)\big)\ge 0,
\end{align*}
since $\beta\ge \alpha$. Hence $\kab$ is submodular.
\end{proof}

\begin{lem}\label{partialinbmm}
    Let $(J,\B,\alpha,\beta)$ be a network system with $\alpha \le \beta$. Then $$\bmm(\kab) =\{\partial\mu\;:\; \mu\in\mBB,\;  \alpha\leq \mu\leq \beta \}.$$
\end{lem}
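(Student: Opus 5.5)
We prove the two inclusions separately.

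\emph{The inclusion ``$\supseteq$''.} Let $\alpha\le\mu\le\beta$. Then $\partial\mu=\mu^1-\mu^2$ is a finite signed measure, hence a charge, and $\partial\mu(J)=\mu(J\times\emptyset)-\mu(\emptyset\times J)=0=\kab(J)$. Moreover, for every $X\in\B$,
\[
\partial\mu(X)=\mu(X\times X^c)-\mu(X^c\times X)\le \beta(X\times X^c)-\alpha(X^c\times X)=\kab(X).
\]
Hence $\partial\mu\in\bmm(\kab)$.

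\emph{The inclusion ``$\subseteq$''.} Let $\gamma\in\bmm(\kab)$. By \Cref{kabsubmo}, $\kab$ is bounded, continuous and submodular with $\kab(\emptyset)=0$. So \Cref{lem:countablyadditive} shows that $\gamma$ is countably additive, i.e.\ $\gamma\in\mB$, and $\gamma(J)=\kab(J)=0$.

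We want a flow $\mu$ with $\alpha\le\mu\le\beta$ and $\mu^1-\mu^2=\gamma$. Write $\gamma=\gamma_+-\gamma_-$ and set $\sigma:=\gamma_+$, $\tau:=\gamma_-$. These satisfy $\sigma(J)=\tau(J)$ and $\sigma-\tau=\gamma$. We then apply \Cref{flowexistence}, so it suffices to check
\[
\alpha(X\times X^c)-\beta(X^c\times X)\le\gamma(X)\qquad\text{for all }X\in\B.
\]

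Since $\gamma\le\kab$ and $\gamma(J)=0$, we get $\gamma(X)=-\gamma(X^c)\ge-\kab(X^c)$. Also
\[
\kab(X^c)=\beta(X^c\times X)-\alpha(X\times X^c).
\]
Therefore $\gamma(X)\ge\alpha(X\times X^c)-\beta(X^c\times X)$, which is exactly the condition required. \Cref{flowexistence} now supplies a $\sigma$-$\tau$ flow $\mu$ with $\alpha\le\mu\le\beta$, and its boundary is $\partial\mu=\mu^1-\mu^2=\sigma-\tau=\gamma$.

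The only subtle point is that $\bmm$ is defined using charges, not measures. \Cref{flowexistence} requires genuine signed measures, so the step $\gamma\in\mB$ via \Cref{lem:countablyadditive}, which needs the continuity from \Cref{kabsubmo}, is essential. Everything else is direct verification.
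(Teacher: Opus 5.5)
Your proof is correct and follows the paper's argument. The first inclusion is a direct comparison; for the second you use \Cref{lem:countablyadditive} to make $\gamma$ a signed measure, derive the cut condition from $\gamma(X)=-\gamma(X^c)\ge-\kab(X^c)$, and apply \Cref{flowexistence}. The only difference is cosmetic: you take $\sigma=\gamma_+$, $\tau=\gamma_-$ where the paper takes $\sigma=\gamma$, $\tau=0$, and both choices satisfy $\sigma(J)=\tau(J)$ and $\sigma-\tau=\gamma$.
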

\begin{proof}
    Let $\mu\in\mBB$ satisfy $\alpha\leq \mu\leq \beta$. Then for every  $X \in \B$, 
    we have $$\partial\mu(X)=\mu(X\times X^c)-\mu(X^c\times X)\leq \beta(X\times X^c)-\alpha(X^c\times X)=\kab(X).$$ 
    Also, $\partial\mu(J) = 0 =\kab(J)$.
    Thus $\partial \mu\in \bmm(\kab)$.

    Conversely, let $\gamma\in \bmm(\kab)$. By \Cref{kabsubmo} and \Cref{lem:countablyadditive}, $\gamma$ is a finite signed measure. Since $\gamma(J)=\kab(J)=0$, we have $\gamma(X)=-\gamma(X^c)$ for all $X \in \B$. 
    Since $\gamma \le \kab$, applying this inequality to $X^c$ gives
    \[\gamma(X^c)\le\kappa_{\alpha,\beta}(X^c)=\beta(X^c\times X)-\alpha(X\times X^c).\]
    Therefore
    \[\alpha(X\times X^c)-\beta(X^c\times X)\le -\gamma(X^c)=\gamma(X).\]
    By~\Cref{flowexistence} with $\sigma=\gamma$ and $\tau = 0$, there exists a $\gamma$-0 flow $\mu \in \mBB$ such that $\alpha\leq \mu\leq \beta$. Hence $\partial \mu = \gamma$.
\end{proof}

\begin{lem}\label{intersectoftwosubmo}
    Given a standard Borel space $(J,\B)$ and two bounded submodular set functions $\varphi,\psi$ such that $\varphi(J)=\psi(J)$ and $\varphi(\emptyset)=\psi(\emptyset)=0$, we have 
    \[\bmm(\varphi)\cap \bmm(\psi) \neq \emptyset\] if and only if
    $$\psi(J)-\psi(X^c)\leq \varphi(X) \quad\text{for any $X \in \B$.}$$
\end{lem}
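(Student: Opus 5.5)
The plan is to reduce the statement to the sandwich theorem, \Cref{separation}. Put $c:=\varphi(J)=\psi(J)$ and let $\psi^\#(X):=\psi(J)-\psi(X^c)$ be the dual of $\psi$. Then $\psi^\#$ is bounded and supermodular, because $X\mapsto\psi(X^c)$ is submodular; moreover $\psi^\#(\emptyset)=0$ and $\psi^\#(J)=c$. The whole proof rests on one identity: for any charge $\gamma$ with $\gamma(J)=c$,
\[
\gamma\le\psi \iff \gamma\ge\psi^\#,
\]
since $\gamma(X)=c-\gamma(X^c)$. Consequently
\[
\bmm(\varphi)\cap\bmm(\psi)=\{\gamma \text{ a charge}:\ \psi^\#\le\gamma\le\varphi,\ \gamma(J)=c\}.
\]

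\emph{Necessity.} Take $\gamma$ in the intersection. Then $\psi(J)-\psi(X^c)\le \psi(J)-\gamma(X^c)=\gamma(X)\le\varphi(X)$ for every $X\in\B$.

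\emph{Sufficiency.} Assume $\psi^\#\le\varphi$ on $\B$. The $\sigma$-algebra $\B$ is a set algebra, so \Cref{separation} applies with $\alpha=\psi^\#$ (supermodular) and $\beta=\varphi$ (submodular). It gives a modular $\gamma$ with $\psi^\#\le\gamma\le\varphi$ and $\gamma(J)=\varphi(J)=c$. To see that $\gamma$ is a charge, evaluate at $\emptyset$: $0=\psi^\#(\emptyset)\le\gamma(\emptyset)\le\varphi(\emptyset)=0$, so $\gamma(\emptyset)=0$, and a modular function vanishing at $\emptyset$ is a charge. By the identity above, $\gamma\in\bmm(\varphi)\cap\bmm(\psi)$.

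The only real check is the hypotheses of \Cref{separation}: boundedness of $\psi^\#$, which follows from boundedness of $\psi$, and supermodularity of $\psi^\#$. For the latter, complementation swaps $\cap$ and $\cup$, so $X\mapsto\psi(X^c)$ inherits submodularity from $\psi$, and negating it and adding the constant $\psi(J)$ gives a supermodular function. Continuity of $\varphi$ and $\psi$ is not needed here. If it is assumed, \Cref{lem:countablyadditive} additionally shows that $\gamma$ is countably additive, but the statement concerns charges, so this is optional.
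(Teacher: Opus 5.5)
Your proposal is correct and essentially matches the paper's proof: both apply the sandwich theorem (\Cref{separation}) to the supermodular dual $X\mapsto\psi(J)-\psi(X^c)$ and $\varphi$, then use complementation and $\gamma(J)=\psi(J)$ to turn the lower bound on $\gamma$ into $\gamma\le\psi$. Your explicit argument that $\gamma(\emptyset)=0$, obtained by sandwiching at $\emptyset$ so that $\gamma$ is a charge, justifies a step the paper only asserts.
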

\begin{proof}
    If $\mu\in\bmm(\varphi)\cap \bmm(\psi)$, then for any $X \in \B$, $$ \varphi(X)+\psi(X^c)\geq \mu(X)+\mu(X^c)=\mu(J)=\psi(J),$$
which gives the disired inequality. 

    Conversely, assume
    $\psi(J)-\psi(X^c)\leq \varphi(X)$ for all $X \in \B$ and define the supermodular set function 
    \[\phi\colon \B\to \mathbb{R}, \quad\phi(X):=\psi(J)-\psi(X^c).\] Then  $\phi\leq \varphi$, $\phi(\emptyset)=\varphi(\emptyset)=0$, and $\phi(J)=\varphi(J)=\psi(J)$.
    By~\Cref{separation}, there exists a modular set function $\mu$ such that $\phi\leq \mu\leq \varphi$, $\mu(\emptyset)=0$ and $\mu(J)=\psi(J)$.
    For any $X\in\B$, we have $$\mu(X)=\mu(J)-\mu(X^c)\leq \psi(J)-\phi(X^c)=\psi(X).$$
    Thus $\mu\in\bmm(\varphi)\cap \bmm(\psi)$.
\end{proof}

\begin{thm}[Existence Theorem]\label{submoexist}
     Given a submodular network system $(J,\B,\alpha,\beta,\varphi)$, there exists a feasible submodular flow if and only if for any $X\in\B$, we have
     $$\alpha(X\times X^c)-\beta(X^c\times X)\leq \varphi(X).$$
\end{thm}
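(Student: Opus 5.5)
The plan is to reduce the statement to an intersection of two base polytopes, using the machinery just developed. By \Cref{partialinbmm}, the set of boundaries of feasible flows is exactly $\bmm(\kab)$. A feasible submodular flow is therefore a flow $\mu$ with $\alpha\le\mu\le\beta$ whose boundary $\gamma=\partial\mu$ satisfies $\gamma\le\varphi$. Since $\gamma(J)=\partial\mu(J)=0=\varphi(J)$, this means exactly $\gamma\in\bmm(\varphi)$. So a feasible submodular flow exists if and only if $\bmm(\kab)\cap\bmm(\varphi)\neq\emptyset$.

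\emph{Necessity.} This is direct. If $\mu$ is feasible, then for every $X\in\B$,
\[
\alpha(X\times X^c)-\beta(X^c\times X)\le \mu(X\times X^c)-\mu(X^c\times X)=\partial\mu(X)\le\varphi(X).
\]

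\emph{Sufficiency.} I will apply \Cref{intersectoftwosubmo} with $\psi=\kab$ and the given $\varphi$. The hypotheses of that lemma hold:
\begin{itemize}
\item $\kab$ is bounded and submodular with $\kab(\emptyset)=\kab(J)=0$ by \Cref{kabsubmo};
\item $\varphi$ is bounded and submodular with $\varphi(\emptyset)=\varphi(J)=0$ by assumption.
\end{itemize}
The criterion in the lemma reads $\kab(J)-\kab(X^c)\le\varphi(X)$ for all $X\in\B$. Since $\kab(J)=0$, the left-hand side is
\[
-\kab(X^c)=-\bigl(\beta(X^c\times X)-\alpha(X\times X^c)\bigr)=\alpha(X\times X^c)-\beta(X^c\times X).
\]
This is precisely the hypothesis of the theorem, so the lemma yields a charge $\gamma\in\bmm(\kab)\cap\bmm(\varphi)$.

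By \Cref{lem:countablyadditive}, $\gamma$ is countably additive, i.e.\ $\gamma\in\mB$. By \Cref{partialinbmm}, there is then a flow $\mu$ with $\alpha\le\mu\le\beta$ and $\partial\mu=\gamma\le\varphi$. This $\mu$ is the required feasible submodular flow.

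\emph{Main obstacle.} The only delicate point is that \Cref{intersectoftwosubmo}, via \Cref{separation}, produces only a finitely additive modular function. One must check that it lies in $\bmm(\kab)$, with $\kab$ bounded and continuous, so that \Cref{lem:countablyadditive} upgrades it to a genuine measure before \Cref{partialinbmm} and \Cref{flowexistence} can be used. Both facts are already established, so the argument should go through without further work.
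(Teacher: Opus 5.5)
Your proposal is correct and matches the paper's proof: you reduce via \Cref{partialinbmm} to $\bmm(\kab)\cap\bmm(\varphi)\neq\emptyset$, then apply \Cref{intersectoftwosubmo} with $\psi=\kab$ and use $\kab(J)=0$. Your direct check of necessity and your explicit appeal to \Cref{lem:countablyadditive} are harmless extras; the countable-additivity step is already part of the proof of \Cref{partialinbmm}.
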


\begin{proof}
    By~\Cref{partialinbmm}, a feasible submodular flow exists if and only if $\bmm(\varphi)\cap \bmm(\kab) \neq \emptyset$.
    By~\Cref{kabsubmo} and \Cref{intersectoftwosubmo}, this is equivalent to
    $$\kab(J)-\kab(X^c)\leq \varphi(X) \quad \text{for any $X\in\B$}.$$
    Since $\kab(J)=0$,  this is precisely
     $$\phantom{ssssssssssssssssssssssss}\alpha(X\times X^c)-\beta(X^c\times X)\leq \varphi(X)\quad \text{for any $X\in\B$}.\phantom{ssssssssssssssssssss}\qedhere$$
\end{proof}

    If $J$ is finite, \Cref{submoexist} reduces to the classical feasibility theorem for discrete submodular flows: 
\begin{coro}[Edmonds--Giles Feasibility Theorem \cite{Edmond1975}]\label{coro:edmonds-giles}
    Let $G=(V, E)$ be a finite directed graph with edge set $E \subseteq V \times V$. Let $l, u \colon E \to \R$ be edge capacity functions satisfying $l \le u$, and let $b \colon 2^V \to \R$ be a submodular set function with $b(\emptyset) = b(V) = 0$.
    We use $\delta^+(X)$ and $\delta^-(X)$ denote the set of edges leaving and entering $X$, respectively.
    Then the following two statements are equivalent.
    \begin{enumerate}
        \item There exists a discrete flow $x \colon E \to \R$ satisfying $$l(e) \le x(e) \le u(e), \;\forall \,e \in E;\quad \partial x(X)  \le b(X) ,  \;\forall \, X \subseteq V,$$
        where $$\partial x(X)=\sum_{e\in \delta^+(X)}x(e)- \sum_{e\in \delta^-(X)}x(e).$$
        \item For any $X \subseteq V$,
        $$\sum_{e\in \delta^+(X)}l(e) - \sum_{e\in \delta^-(X)}u(e) \le b(X).$$
    \end{enumerate}
\end{coro}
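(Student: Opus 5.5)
The plan is to deduce the corollary from \Cref{submoexist} by realizing the finite digraph as a standard Borel space. Take $J:=V$ with $\B:=2^V$; a finite discrete space is Polish, so $(J,\B)$ is standard Borel. Every finite signed measure on $J\times J$ is then a weight function on $V\times V$. Define $\alpha,\beta\in\mBB$ on singletons by
\[
\alpha(\{(u,v)\}):=l(e),\qquad \beta(\{(u,v)\}):=u(e)\qquad\text{if } e=(u,v)\in E,
\]
and set both to be $0$ on pairs not in $E$. Then $\alpha\le\beta$. Take $\varphi:=b$. Since $\B$ is finite, $b$ is bounded. It is also continuous: if $\mathbf 1_{A_n}\to\mathbf 1_A$ pointwise on the finite set $V$, then $A_n=A$ for all large $n$. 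Together with $b(\emptyset)=b(V)=0$ and submodularity, this makes $(J,\B,\alpha,\beta,b)$ a submodular network system.

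The dictionary between the two settings goes as follows. A flow $x\colon E\to\R$ with $l\le x\le u$ corresponds to the measure $\mu$ with $\mu(\{e\})=x(e)$ on $E$ and $\mu=0$ off $E$; this $\mu$ satisfies $\alpha\le\mu\le\beta$. Conversely, any $\mu$ with $\alpha\le\mu\le\beta$ vanishes off $E$, because $\alpha$ and $\beta$ do, so it comes from such an $x$. For $X\subseteq V$ we have $X\times X^c\cap E=\delta^+(X)$ and $X^c\times X\cap E=\delta^-(X)$. Hence $\partial\mu(X)=\partial x(X)$, and
\[
\alpha(X\times X^c)-\beta(X^c\times X)=\sum_{e\in\delta^+(X)}l(e)-\sum_{e\in\delta^-(X)}u(e).
\]
Statement (1) is therefore exactly the existence of a feasible submodular flow, and statement (2) is exactly the condition in \Cref{submoexist}. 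Applying that theorem gives the equivalence.

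The only point needing care is that off-edge pairs, including loops on the diagonal and parallel structure, are handled by setting $\alpha=\beta=0$ there. This forces feasible measures to be supported on $E$, so the correspondence with edge flows is bijective. Everything else is routine bookkeeping, and I do not expect a genuine obstacle.
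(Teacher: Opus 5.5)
Your proposal is correct and follows essentially the same route as the paper. Both realize $V$ as the finite standard Borel space $(V,2^V)$, encode $l,u$ as measures $\alpha,\beta$ supported on $E$, take $\varphi=b$, and invoke \Cref{submoexist} through the bijection between feasible measures and edge flows. You also verify explicitly that $b$ is bounded and continuous, which the paper only asserts.
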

\begin{proof}
    We can naturally embed this discrete problem into our measurable framework. Let $J = V$ and $\B=2^V$. Define capacity measures $\alpha, \beta \in \mBB$ as follows:
    \[
        \alpha(S) := \sum_{e \in S \cap E} l(e), \qquad \beta(S) := \sum_{e \in S \cap E} u(e) ,\;   S \subseteq V \times V.
    \]
    Then $(J,\B,\alpha,\beta,b)$ is a submodular network system. Any feasible measure \(\mu\) corresponds bijectively to a discrete flow
\(x:E\to\mathbb R\) by setting \(x(e)=\mu(\{e\})\). Conversely, every such
discrete flow defines a measure supported on \(E\). Then the corollary follows by \Cref{submoexist}.
\end{proof}

\begin{exmp}\label{exmp:nonatomic}
We give a simple example showing that \Cref{submoexist} is not merely a reformulation of \Cref{coro:edmonds-giles}. Let $J=[0,1]$, $\B$ be the Borel $\sigma$-algebra generated by open intervals and $\lambda$ be the Lebesgue measure on $J$. Put
\[
    \Omega:=\{(x,y)\in[0,1]^2:x<y\},
    \qquad
    \psi:=(\lambda\otimes\lambda)|_\Omega.
\]
Thus $\psi$ may be viewed as a continuum of directed arcs from smaller points to larger points. Fix $0<r\le1$, and define lower and upper capacity measures by
\[
    \alpha:=r \psi,
    \qquad
    \beta:=\psi.
\]
Then $\alpha\le\beta$. For a parameter $q\ge0$, define
\[
    \varphi_q(X):=q\lambda(X)\lambda(X^c),\qquad X\in\B.
\]
The function $\varphi_q$ is bounded and continuous. Moreover, it is submodular, since for any $A,B\in\B$,
\[
\varphi_q(A)+\varphi_q(B)-\varphi_q(A\cup B)-\varphi_q(A\cap B)
=2q\lambda(A\setminus B)\lambda(B\setminus A)\ge0.
\]
Also $\varphi_q(\emptyset)=\varphi_q(J)=0$.

For every $X\in\B$, we have
\[
\alpha(X\times X^c)-\beta(X^c\times X)
\le \alpha(X\times X^c)
=r m(X\times X^c)
\le r(\lambda\otimes\lambda)(X\times X^c)
=r\lambda(X)\lambda(X^c).
\]
Hence, if $q\ge r$, then
\[
    \alpha(X\times X^c)-\beta(X^c\times X)\le\varphi_q(X)
\]
for all $X\in\B$. By \Cref{submoexist}, there exists a feasible submodular flow $\mu\in\mBB$ satisfying $\alpha\le\mu\le\beta$ and $\partial\mu\le\varphi_q$.

The threshold $q\ge r$ is sharp for this family. Indeed, if $q<r$ and $X=[0,t]$ with $0<t<1$, then
\[
    \alpha(X\times X^c)=rt(1-t),
    \qquad
    \beta(X^c\times X)=0,
\]
while $\varphi_q(X)=qt(1-t)$. Thus the cut condition in \Cref{submoexist} fails, and no feasible submodular flow exists when $q<r$.
\end{exmp}
\subsection{Optimality for submodular network system with cost}
In this subsection, we fix $\nu\in\mpBB$ and restrict $\mBB$ to those measures \emph{absolutely continuous} with respect to $\nu$, so that we can apply~\Cref{convexseparation}. We first give the following lemma about optimizing values of the integral of a potential on measures in the base polytope of a submodular set function.

\begin{lem}\label{maxminmuF}
    Given $(J,\B)$ and a bounded continuous submodular set function $\varphi$ on $\B$ with $\varphi(\emptyset)=\varphi(J)=0$, let $\lambda_\varphi\in \mpB$ be the measure given by~\Cref{lem:countablyadditive} such that every $\mu\in\bmm(\varphi)$ satisfies $\mu\ll\lambda_\varphi$.
    Then for any potential $F\colon J\times J\to\mathbb{R}$ with $F(x,y)=f(x)-f(y)$ with $f \ge 0$ on $J$, we have 
     \begin{align*}
        \max&\{\mu(F)\;:\; \mu\in\mBB,\; \partial\mu \leq \varphi\}=\hat{\varphi}(f)\\
        \min&\{\mu(F)\;:\; \mu\in\mBB,\; \partial\mu \leq \varphi\}=-\hat{\varphi}(-f).
    \end{align*}
    Moreover, the maximum and minimum can be attained by signed measures absolutely continuous with respect to $\lambda_\varphi\otimes \lambda_\varphi$. 
\end{lem}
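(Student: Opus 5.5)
The plan is to reduce both optimizations to maximizing or minimizing a linear functional over $\bmm(\varphi)$, and then compute those values with a Choquet-integral (greedy) argument. For any flow $\mu$ and $F(x,y)=f(x)-f(y)$, Fubini on the marginals gives
\[
\mu(F)=\mu^1(f)-\mu^2(f)=(\partial\mu)(f).
\]
So $\mu(F)$ depends only on $\gamma:=\partial\mu$. The constraint $\partial\mu\le\varphi$ forces $\gamma\le\varphi$, and $\gamma(J)=0=\varphi(J)$ holds automatically. Hence $\gamma\in\bmm(\varphi)$, and by \Cref{lem:countablyadditive} it is a finite signed measure with $\gamma\ll\lambda_\varphi$. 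Conversely, every $\gamma\in\bmm(\varphi)$ is the boundary of some flow absolutely continuous with respect to $\lambda_\varphi\otimes\lambda_\varphi$. If $\lambda_\varphi(J)>0$, put $c:=\lambda_\varphi(J)$ and take
\[
\mu_\gamma:=\tfrac1{c}\,(\gamma_+\otimes\lambda_\varphi-\lambda_\varphi\otimes\gamma_-).
\]
Since $\gamma(J)=0$, its marginals differ by $\gamma_+-\gamma_-=\gamma$. If $\lambda_\varphi=0$, then $\bmm(\varphi)=\{0\}$ and we take $\mu=0$. Thus both extremal problems equal $\sup$ (resp.\ $\inf$) of $\gamma(f)$ over $\gamma\in\bmm(\varphi)$, and the absolute-continuity clause follows once the optimum over $\bmm(\varphi)$ is attained.

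For the upper bound, take $\gamma\in\bmm(\varphi)$ and $f\ge0$ with $f<D$. The layer-cake formula (as in \Cref{potentialint}) gives
\[
\gamma(f)=\int_0^D\gamma(\{f\ge t\})\,\dt\le\int_0^D\varphi(\{f\ge t\})\,\dt=\hat\varphi(f).
\]
Similarly, since $\gamma(X)=-\gamma(X^c)\ge-\varphi(X^c)$, we get $\gamma(f)\ge-\hat\varphi(-f)$. To see the latter, write $-f+D\ge0$ and compute $\hat\varphi(-f)=\int_0^D\varphi(\{D-f\ge t\})\,\dt$ using $\varphi(J)=0$. Then $\{D-f\ge t\}=\{f>D-t\}^c$, and one must check that $\{f\le s\}$ versus $\{f<s\}$ only differ on a null set of $t$.

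Attainment is the main step. The sets $A_t=\{f\ge t\}$ form a chain $\mathcal L$, and $\varphi$ is modular on any chain. Adding $\emptyset$ and $J$ keeps it a lattice, so \Cref{strongerseparation} yields $\gamma^*\in\bmm(\varphi)$ with $\gamma^*(A_t)=\varphi(A_t)$ for all $t$. Integrating in $t$ gives $\gamma^*(f)=\hat\varphi(f)$, so the maximum is attained by $\mu_{\gamma^*}$. For the minimum, apply the same argument to the chain $\{-f+D\ge t\}$, equivalently the complements $\{f>s\}^c$, which together with the $\{f\ge t\}$-type sets is still a chain. This produces $\gamma_*$ with $\gamma_*(X^c)=\varphi(X^c)$, i.e.\ $\gamma_*(X)=-\varphi(X^c)$, on those sets, and hence $\gamma_*(f)=-\hat\varphi(-f)$.

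I expect the main obstacle to be the bookkeeping with $\ge$ versus $>$ level sets in the minimum case, and checking that \Cref{strongerseparation} applies on the $\sigma$-algebra viewed as a set algebra. Both seem routine: level sets with $\ge$ and $>$ agree for all but countably many $t$, which does not affect the integrals.
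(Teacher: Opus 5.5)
Your plan follows the paper's main line. $\mu(F)$ depends only on $\partial\mu\in\bmm(\varphi)$, the layer-cake formula gives both bounds, and \Cref{strongerseparation} on a chain of level sets of $f$ gives a charge in $\bmm(\varphi)$ that is tight on the chain and attains the bound.

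You depart from the paper in realizing a given $\gamma\in\bmm(\varphi)$ as a boundary, and the formula you wrote fails. For $\mu_\gamma=\frac1c(\gamma_+\otimes\lambda_\varphi-\lambda_\varphi\otimes\gamma_-)$ one gets $\mu_\gamma^1=\gamma_+-\frac{\gamma_-(J)}{c}\lambda_\varphi$ and $\mu_\gamma^2=\frac{\gamma_+(J)}{c}\lambda_\varphi-\gamma_-$. Hence $\partial\mu_\gamma=|\gamma|-\frac{|\gamma|(J)}{c}\lambda_\varphi$, which is not $\gamma$. For instance, on a two-point space with $\lambda_\varphi=\delta_1+\delta_2$ and $\gamma=\delta_1-\delta_2$, you get $\mu_\gamma=\frac12(\delta_{(1,1)}-\delta_{(2,2)})$, whose boundary is $0$.

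Flipping the sign fixes it. The measure $\frac1c(\gamma_+\otimes\lambda_\varphi+\lambda_\varphi\otimes\gamma_-)$ has marginals $\gamma_++\frac{\gamma_-(J)}{c}\lambda_\varphi$ and $\gamma_-+\frac{\gamma_+(J)}{c}\lambda_\varphi$, whose difference is $\gamma$ because $\gamma_+(J)=\gamma_-(J)$. Simpler still is $\frac1a\gamma_+\otimes\gamma_-$ with $a=\gamma_+(J)$ when $\gamma\neq0$.

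With that correction your route is more elementary than the paper's. The paper builds $\nu_\alpha=\frac2a(\alpha_1+\alpha_2)\otimes(\alpha_1+\alpha_2)$, checks $\nu_\alpha(S\times S^c)\ge\alpha(S)$ by cases, and invokes \Cref{flowexistence}, hence Lov\'asz's measurable Hoffman theorem. The domination $0\le\mu\le\nu_\alpha$ it obtains only serves to give nonnegativity and absolute continuity with respect to $\lambda_\varphi\otimes\lambda_\varphi$. Your explicit product flow has both properties by inspection.

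Your justification of the $\ge$ versus $>$ bookkeeping is also wrong as stated. The sets $\{f\ge t\}$ and $\{f>t\}$ can differ for uncountably many $t$; take $f(x)=x$. The issue disappears if you choose the layer-cake representation that matches sublevel sets. Since $\gamma$ is a measure, $\gamma(f)=\int_0^D\gamma(\{f>s\})\,\mathrm{d}s$ holds exactly. Moreover $\gamma(\{f>s\})=-\gamma(\{f\le s\})\ge-\varphi(\{f\le s\})$, while $\hat\varphi(-f)=\int_0^D\varphi(\{f\le s\})\,\mathrm{d}s$. This is what the paper does with $B_t=\{f\le t\}$.

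If you did want to compare $\varphi(\{f<s\})$ with $\varphi(\{f\le s\})$, equality off a countable set does hold. It follows from continuity of $\varphi$, because $s\mapsto\varphi(\{f\le s\})$ has one-sided limits everywhere, not from any statement about the sets themselves.

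For the minimum, apply \Cref{strongerseparation} to the sublevel chain $\{f\le s\}_{0\le s\le D}\cup\{\emptyset,J\}$ alone. Do not merge it with the superlevel sets $\{f\ge t\}$, which are not nested with it in general; nothing more is needed.
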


\begin{proof}
    Assume $0\leq f< C$ and let $A_t=\{x\in J\;:\;f(x)\geq t\}$. By~\Cref{potentialint}, $$F(x,y)=\int_0^C\big(\mathbf{1}_{A_t}(x)-\mathbf{1}_{A_t}(y)\big)\,\dt.$$
    For any $\mu\in \mBB$, Fubini's Theorem gives
    \begin{align*}
        \mu(F)&=\int_{J\times J}\int_0^C\big(\mathbf{1}_{A_t}(x)-\mathbf{1}_{A_t}(y)\big)\,\dt\,\mathrm{d}\mu(x,y)=\int_0^C\big(\mu(A_t\times J)-\mu(J\times A_t)\big)\,\dt\\
        &=\int_0^C\partial \mu (A_t)\,\dt\ \leq \ \int_0^C\varphi(A_t)\,\dt \ =\ \hat{\varphi}(f).
    \end{align*}
    Similarly, with $B_t=\{x\in J\,:\,f(x)\leq t\}$ we have $$F(x,y)=-\int_{0}^C\big(\mathbf{1}_{B_t}(x)-\mathbf{1}_{B_t}(y)\big)\, \dt,$$ and using $\varphi(J)=0$,
    \begin{align*}
        \mu(F)&=-\int_0^C\big(\mu(B_t\times J)-\mu(J\times B_t)\big)\,\dt=-\int_0^C\partial \mu (B_t)\,\dt\geq -\int_0^C\varphi(B_t)\,\dt\\
        &=-\int_0^C\varphi(B_{C-t})\,\dt=-\int_0^C\varphi(\{C-f\geq t\})\,\dt=-\hat{\varphi}(C-f)=-\hat{\varphi}(-f).
    \end{align*}

    We next prove that the maximum and minimum can be attained.
    Let
\[
\mathcal C := \{A_t : 0\le t\le C\}.
\]
Then $\mathcal C$ is a lattice containing $\emptyset$ and $J$.
Moreover, since $\mathcal C$ is a chain, the restriction
$\varphi|_{\mathcal C}$ is modular.
Therefore, by \Cref{strongerseparation},
there exists $\alpha\in\bmm(\varphi)$ such that
\[
\alpha(S)=\varphi(S)\qquad \text{for all }S\in\mathcal C.
\]
    It is therefore enough to construct a flow \(\mu\in \M(\mathcal B^2)\)
such that \(\partial\mu=\alpha\). Indeed, for such a \(\mu\),
\[
\mu(F)
=
\int_0^C \partial\mu(A_t)\,\dt
=
\int_0^C \alpha(A_t)\,\dt
=
\int_0^C \phi(A_t)\,\dt
=
\widehat\phi(f).
\]
    Let $\alpha=\alpha_1-\alpha_2$ be the Hahn decomposition of $\alpha$ and $\alpha_i$ is supported on $X_i$, $i=1,2$, such that $J=X_1\sqcup X_2$.
    Since $\alpha(J)=0$, we define $a=\alpha_1(X_1)=\alpha_2(X_2)$.
    If $a=0$, then $\alpha=0=\partial 0$ and $\mu=0$ attains maximum. Now we assume $a>0$.
    Let $$\nu_\alpha=\dfrac{2}{a}(\alpha_1+\alpha_2)\otimes (\alpha_1+\alpha_2)\in \mpBB.$$ 
    
    We claim that for any $S\in \B$ we have $$\nu_\alpha(S\times S^c)\geq \alpha(S).$$ Let $S=S_1\sqcup S_2$ such that $S_1\subseteq X_1$ and $S_2\subseteq X_2$. Then $\alpha(S)=\alpha_1(S_1)-\alpha_2(S_2)$. Let $T_1=X_1\setminus S_1$ and $T_2=X_2\setminus S_2$.
        If $\alpha_1(S_1)\leq \alpha_1(T_1)$. Then $$\nu_\alpha(S\times S^c)\geq\dfrac{2}{a}\cdot\alpha_1(S_1)\cdot \alpha_1(T_1)\geq \dfrac{2}{a}\cdot\alpha_1(S_1)\cdot\dfrac{a}{2}\geq\alpha_1(S_1)\geq \alpha(S).$$
        If $\alpha_1(S_1)> \alpha_1(T_1)$. Since $\alpha_1(S_1)-\alpha_2(S_2)=\alpha_2(T_2)-\alpha_1(T_1)\leq \alpha_2(T_2)$, we have
        $$\nu_\alpha(S\times S^c)\geq\dfrac{2}{a}\cdot\alpha_1(S_1)\cdot \alpha_2(T_2)\geq \dfrac{2}{a}\cdot\dfrac{a}{2}\cdot\alpha_2(T_2)\geq\alpha(S).$$
    By~\Cref{flowexistence}, there exists $\mu\in \mBB$ such that $0\leq \mu\leq \nu_\alpha$ and $\partial \mu=\alpha$. Then $\mu(F)=\hat{\varphi}(f)$.
    Note that $\alpha_1,\alpha_2\ll \lambda_\varphi$ and thus $\nu_\alpha\ll\lambda_\varphi\otimes \lambda_\varphi$. As a result, $\mu\ll\lambda_\varphi\otimes \lambda_\varphi$.

    For the minimum, applying the same argument to the chain $\mathcal{C}':=\{B_t\}_{0\leq t\leq C}$ gives a signed measure $\beta\leq \varphi$ with $\beta|_{\mathcal{C}'}=\varphi|_{\mathcal{C}'}$ and a flow $\mu'\in\mBB$ with $\partial \mu'=\beta$ and $\mu'(F)=-\hat{\varphi}(-f)$. Still we have $\mu'\ll\lambda_\varphi\otimes \lambda_\varphi$.    
\end{proof}

The following theorem is a submodular flow version of \cite[Theorem~4.6]{lovasz2021flows}, with \Cref{maxminmuF} providing the potential-optimization step. For functions on $J$, we denote by $g^+$ and $g^-$ the two functions $ g\vee 0$ and $ (-g)\vee 0$ and it is easy to see that $g=g^+-g^-$.

\begin{thm}[Optimality Theorem]\label{thm:minicostsubmo}
     Given a submodular network system with cost $(J,\B,\alpha,\beta,\varphi,v)$, there exists a feasible submodular flow $\mu$ such that $\mu(v)=c$ if and only if there exists a feasible submodular flow and for any potential $F(x,y)=f(x)-f(y)$ and $f\geq 0$, we have
     \begin{align}
         \label{eq:dualineqonpotentialF}
         \alpha((F+v)^+)-\beta((F+v)^-)-\hat{\varphi}(f)\ \leq \ c  \ \leq \ \beta((F+v)^+)-\alpha((F+v)^-)+\hat{\varphi}(-f).
     \end{align}
     Consequently,
\[
\inf_{\mu \in \F_\varphi(J,\B,\alpha,\beta)}\mu(v)=
\sup_{F=f(x)-f(y),\ f\ge0}
\left[ \alpha((F+v)^+)-\beta((F+v)^-)-\hat{\varphi}(f)\right],
\]
and
\[
\sup_{\mu \in \F_\varphi(J,\B,\alpha,\beta)}\mu(v)=
\inf_{F=f(x)-f(y),\ f\ge0}
\left[\beta((F+v)^+)-\alpha((F+v)^-)+\hat{\varphi}(-f)\right],
\]
where         \[\F_\varphi(J,\cB,\alpha,\beta):=
        \{\mu\in \mBB:\alpha\le\mu\le\beta,
        \ \partial\mu\le\varphi\}.\]
\end{thm}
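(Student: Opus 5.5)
Necessity is a direct computation. If $\mu$ is feasible with $\mu(v)=c$, then for any potential $F=f(x)-f(y)$ with $f\ge0$ we write $c=\mu(v)=\mu(F+v)-\mu(F)$. Splitting $F+v=(F+v)^+-(F+v)^-$ and using $\alpha\le\mu\le\beta$ gives
\[
\alpha((F+v)^+)-\beta((F+v)^-)\le\mu(F+v)\le\beta((F+v)^+)-\alpha((F+v)^-).
\]
\Cref{maxminmuF} gives $-\hat\varphi(-f)\le\mu(F)\le\hat\varphi(f)$. Subtracting these yields \eqref{eq:dualineqonpotentialF}.

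For sufficiency I will follow the separation scheme of \cite[Theorem~4.6]{lovasz2021flows}. Since $\F:=\F_\varphi(J,\B,\alpha,\beta)$ is convex and nonempty, the set $\{\mu(v):\mu\in\F\}$ is an interval. It therefore suffices to prove the two one-sided statements: if the left inequality holds for all $F$, then some feasible $\mu$ has $\mu(v)\le c$; symmetrically for the right (or replace $v$ by $-v$). I also want this interval to be closed. Feasibility forces $|\mu|\le|\alpha|+|\beta|$, so \Cref{compactmeasure} applies. Setwise convergence preserves $\alpha\le\mu\le\beta$ and $\partial\mu\le\varphi$. It also preserves $\mu(v)$, because $v$ is bounded and the convergence is setwise on a dominated family, so it passes to integrals of bounded measurable functions. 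Hence the infimum and supremum are attained.

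Now suppose, for contradiction, that $\mu(v)>c$ for every $\mu\in\F$. Fix $\psi:=|\alpha|+|\beta|+\lambda_\varphi\otimes\lambda_\varphi$ and work in the Banach space $\{\mu\ll\psi\}$. All feasible flows live there, as do the optimizers from \Cref{maxminmuF}. Decompose $\mu=\mu_1+\mu_2$, where $\mu_1$ carries the capacity constraints and $\mu_2$ carries the boundary: $\partial\mu$ must lie in $\bmm(\varphi)$ up to a circulation. Concretely, I consider the convex sets
\[
K_1=\{(\mu_1,\mu_2):\alpha\le\mu_1\le\beta,\ \mu_1(v)\le c\},\qquad K_2=\{(\mu_1,\mu_2):\partial\mu_2\le\varphi\},\qquad K_3=\{(\mu_1,\mu_2):\mu_1-\mu_2 \text{ a circulation}\}.
\]
I enlarge each by an $\varepsilon$-ball to make them open, using the compactness above to keep the intersection empty for small $\varepsilon$. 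Then \Cref{thm:separatinghyperplane} provides functionals $\cL_1,\cL_2,\cL_3$ summing to zero. Since $\cL_3$ must be bounded below on a subspace translate, it vanishes on pairs $(c,-c)$ with $c$ a circulation. By \Cref{convexseparation} this produces a potential $F$ with $\cL_3(\mu_1,\mu_2)=\mu_1(F)-\mu_2(F)$ up to sign. Hence $\cL_1$ depends only on $\mu_1$ through $-F$, and $\cL_2$ depends only on $\mu_2$ through $F$. A Lagrange multiplier $t\ge0$ for the constraint $\mu_1(v)\le c$ appears, and after rescaling it leads to $F+v$.

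Evaluating the resulting inequality, the infimum of $\mu_1(F+v)$ over $\alpha\le\mu_1\le\beta$ is $\alpha((F+v)^+)-\beta((F+v)^-)$, attained by choosing $\mu_1=\alpha$ on $\{F+v>0\}$ and $\beta$ elsewhere. The supremum of $\mu_2(F)$ under $\partial\mu_2\le\varphi$ is $\hat\varphi(f)$ by \Cref{maxminmuF}, after shifting $f$ to be nonnegative. Together these give $\alpha((F+v)^+)-\beta((F+v)^-)-\hat\varphi(f)>c$, contradicting the hypothesis. The main obstacle is the degenerate case $t=0$, where the multiplier on the cost constraint vanishes. In that case the separation certifies infeasibility of $\F$ itself, and by \Cref{submoexist} this contradicts the assumed existence of a feasible flow. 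Handling this case, together with keeping the bookkeeping of signs and open enlargements honest, is where I expect the care to be needed. The $\inf/\sup$ formulas then follow immediately, since the attained extremes are characterized by \eqref{eq:dualineqonpotentialF}.
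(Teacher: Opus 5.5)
Your necessity argument, the reduction to one-sided statements via convexity of $\{\mu(v):\mu\in\F_\varphi\}$, the compactness step and the degenerate case are all fine. The overall strategy is the paper's: separation by \Cref{thm:separatinghyperplane}, a potential from \Cref{convexseparation}, and evaluation by \Cref{maxminmuF}. The gap is the sentence ``a Lagrange multiplier $t\ge0$ \dots\ appears''. With your sets, the separation cannot produce it.

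Here is why. $K_1$ puts no constraint on $\mu_2$, so $\cL_1$ must vanish on the $\mu_2$-coordinate. Likewise $K_2$ forces $\cL_2$ to vanish on the $\mu_1$-coordinate, and $\cL_3$ vanishes on the subspace $K_3$. So $\cL_1+\cL_2+\cL_3=0$ forces $\cL_1(\mu_1,\mu_2)=-\mu_1(F)$ and $\cL_2(\mu_1,\mu_2)=\mu_2(F)$, with no $v$-term anywhere. What the separation actually gives is only
\[
\sup\{\mu_1(F):\ \alpha\le\mu_1\le\beta,\ \mu_1(v)\le c\}<-\hat{\varphi}(-f).
\]
To reach a violation of the left inequality you would need the strong duality
\[
\sup\{\mu(F):\ \alpha\le\mu\le\beta,\ \mu(v)\le c\}=\inf_{t\ge0}\bigl[tc+\beta((F-tv)^+)-\alpha((F-tv)^-)\bigr].
\]
This is itself an infinite-dimensional min--max statement of the same kind as the theorem, and nothing you have set up supplies it. It is true, for instance by Sion's minimax theorem: the box is compact for setwise convergence, and $\mu\mapsto\mu(F)$ and $\mu\mapsto\mu(v)$ are continuous there. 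The infimum over $t$ need not be attained, but the strict inequality still yields some $t\ge0$. Then $t>0$ gives your contradiction after dividing by $t$ and passing to the potential $-F/t$ (using $\varphi(J)=0$ and positive homogeneity of $\hat{\varphi}$), and $t=0$ is your degenerate case. As written, though, this step is missing.

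The simplest repair is not to merge the cost constraint into $K_1$. One option is to make the half-space $\{\mu_1(v)\le c\}$ a fourth set. A functional bounded below on an open enlargement of that half-space is $-t\,\mu(v)$ with $t\ge0$, which is exactly your multiplier. The separation then reads $\alpha((tv-G)^+)-\beta((tv-G)^-)-tc-\hat{\varphi}(-g)>0$ for the potential $G=g(x)-g(y)$ coming from $\cL_3$.

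The other option is the paper's route: take three sets directly in $\mBB$, namely $\cS=\{\partial\mu\le\varphi\}$, the hyperplane $\cH=\{\mu(v)=c\}$, and $\cX=\{\alpha\le\mu\le\beta\}$.
\begin{itemize}
\item Since $\cS$ is already invariant under adding circulations, $\cL_1$ vanishes on circulations directly, so your product-space coupling via $K_3$ is unnecessary.
\item The functional on $\cH$ is automatically $b\,\mu(v)$, and $b$ plays the role of your multiplier.
\item Because $b$ may have either sign, the cases $b=1$, $b=-1$ and $b=0$ contradict the right inequality, the left inequality and feasibility respectively. So the paper also needs no one-sided reduction.
\end{itemize}
The paper does the compactness step at the end: it shows the $\epsilon$-intersections are nonempty for all $\epsilon$ and takes a setwise limit. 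You do it before the separation; the two are equivalent.
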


\begin{proof}
    \emph{Necessity.} If such a flow $\mu$ exists, then for any potential $F$ and any measurable $g$ we have the elementary bounds
    \[
      \alpha(g^+)-\beta(g^-)\ \le\ \mu(g)\ \le\ \beta(g^+)-\alpha(g^-),
    \]
    valid whenever $\alpha\le \mu\le \beta$. With $g=F+v$ and using \Cref{maxminmuF} we have $$\beta((F+v)^+)-\alpha((F+v)^-)\ \geq \ \mu(F+v) \ =\ \mu(F)+\mu(v) \ \geq \ c -\hat{\varphi}(-f),$$  giving the right-hand inequality in~\eqref{eq:dualineqonpotentialF}.
    Similarly, $$\alpha((F+v)^+)-\beta((F+v)^-)\ \leq \ \mu(F+v)\ =\ \mu(F)+\mu(v)\ \leq \ c+\hat{\varphi}(f),$$ which yields the left-hand inequality.

    \emph{Sufficiency.}
    Let $\cS$ be the set of all submodular flows, $\cH=\{\mu \in \mBB: \mu(v)=c\}$ and $\cX=\{\mu \in \mBB: \alpha\le \mu\le \beta\}$.
    For $\epsilon>0$, let $\cS_\epsilon=B_\epsilon(\cS)$, $\cH_\epsilon=B_\epsilon(\cH)$ and $\cX_\epsilon=B_\epsilon(\cX)$ to be the $\epsilon$-ball in the total variation norm.
    We claim that $$\cS_\epsilon\cap \cH_\epsilon\cap \cX_\epsilon\neq \emptyset$$ for any $\epsilon>0$. 
    
     Suppose, for contradiction, that $\cS_\epsilon\cap \cH_\epsilon\cap \cX_\epsilon=\emptyset$ for some $\epsilon>0$. By~\Cref{thm:separatinghyperplane} there exist bounded linear functionals $\cL_i$ on $\mBB$ and $a_i\in \R$ ($i=1,2,3$) such that $$\cL_1+\cL_2+\cL_3=0,\;a_1+a_2+a_3=0,\;\cL_i(\mu_i)\geq a_i,\;\sum_{i=1}^3\cL_i(\mu_i)>0,\;\mu_1\in \cS,\; \mu_2\in\cH,\; \mu_3\in \cX.$$
     Note that for any submodular flow $\mu$ and circulation $\nu$, we have $\mu+a\nu$ is a submodular flow for any $a\in\R$. Thus $\cL_1(\mu)+a\cL_1(\nu)=\cL_1(\mu+a\nu)\geq a_1$ for any $a\in \R$ and so $\cL_1(\nu)=0$ for any circulation $\nu$. 
     
     By~\Cref{lem:countablyadditive} there exists some $\lambda_\varphi\in \mpB$ such that for any $\mu\in \bmm(\varphi)$, $\mu\ll \lambda_\varphi$. Let $\nu_0=\lambda_\varphi\otimes \lambda_\varphi+|\alpha|+|\beta|$.
     By~\Cref{convexseparation} there exists a potential $F(x,y)=f(x)-f(y)$ with $0\leq f< C$ such that $\cL_1(\mu)=\mu(F)$ for all $\mu \in \mBB$ with $\mu\ll \nu_0$.
     By~\Cref{maxminmuF} there exists $\nu_1,\nu_2\in\mpBB$ such that $\nu_1,\nu_2\ll \nu_0$, $\nu_1(F)= -\hat{\varphi}(-f)$ and $\nu_2(-F)=\hat{\varphi}(-f)$.
    Similarly, since $\H$ is a hyperplane and $\cL_2(\H)\geq a_2$,
    there exists $b\in \R$ such that for any $\mu\in \mBB$ we have $\cL_2(\mu)=b\mu(v)$. Then $\mathcal{L}_3(\mu)=-\mu(F)-b\mu (v)=-\mu(F+bv)$.
    By the above discussion, for any $\mu_1\in \cS,\; \mu_2\in\cH,\; \mu_3\in \cX$ with $\mu_1\ll\nu_0$ we have $$0=a_1+a_2+a_3<\mu_1(F)+bc-\mu_3(F+bv),$$
    and thus 
    $$\beta((F+bv)^+)-\alpha((F+bv)^-)<\mu_1(F)+bc.$$
If \(b\ne0\), divide the separating inequality by \(|b|\) and replace
\(F\) by \(F/|b|\). Thus it suffices to consider \(b=1\) and \(b=-1\).
The case \(b=0\) is treated separately.

    If $b=1$, let $\mu_1=\nu_1$ we have
    $$\beta((F+v)^+)-\alpha((F+v)^-)<c-\hat{\varphi}(-f),$$ contradicting the right-hand side of~\eqref{eq:dualineqonpotentialF}.

    If $b=-1$, let $\mu_1=\nu_2$ we have $$\alpha((-F+v)^+)-\beta((-F+v)^-)>c+\hat{\varphi}(-f),$$ contradicting the left-hand side of~\eqref{eq:dualineqonpotentialF}. 

    If $b=0$, let $\mu_1=\nu_1$ we have $$\beta(F^+)-\alpha(F^-)<-\hat{\varphi}(-f).$$
    Let $B_t=\{C-f\geq t\}$. Then $\hat{\varphi}(-f)=\hat{\varphi}(C-f)=\int_0^C\varphi(B_t)\;\dt$. By~\Cref{submoexist} and Fubini's Theorem,
    \begin{align*}
        -\hat{\varphi}(-f)&=-\int_0^C\varphi(B_t)\;\dt\leq \int_0^C \big(\beta(B_t^c\times B_t)-\alpha(B_t\times B_t^c)\big)\;\dt\\
        &=\int_0^C \int_{B_t^c\times B_t} 1 \;\text{d}\beta(x,y) - \int_{B_t\times B_t^c} 1 \;\text{d}\alpha(x,y)    \; \dt\\
        &=\int_{\{(x,y)\;:\; f(x)>f(y)\}} \int_{f(y)}^{f(x)} 1 \; \dt\;\text{d}\beta(x,y)-\int_{\{(x,y)\;:\; f(x)<f(y)\}} \int_{f(x)}^{f(y)} 1 \; \dt\;\text{d}\alpha(x,y)\\
        &=\beta(F^+)-\alpha(F^-),
    \end{align*}
    a contradiction.

    Therefore $\cS_\epsilon\cap \cH_\epsilon\cap \cX_\epsilon\neq \emptyset$ for every $\epsilon>0$.

    For each $n\ge 1$, choose $$\xi_n\in \cS_{2^{-n}}\cap \cH_{2^{-n}}\cap \cX_{2^{-n}}.$$
    By the definition of the $2^{-n}$-neighborhoods, we may choose $\mu_n\in \cS$, $\eta_n\in \cH$, and $\omega_n\in \cX$ such that $$\|\xi_n-\mu_n\|<2^{-n}, \|\xi_n-\eta_n\|<2^{-n}, \|\xi_n-\omega_n\|<2^{-n}.$$
    Hence $\|\mu_n-\omega_n\|<2^{1-n}$ and $\|\eta_n-\omega_n\|<2^{1-n}$.

    Since $\omega_n\in\cX$, we have $\alpha\le \omega_n\le \beta$ for every $n$. Therefore $|\omega_n|\le 2|\alpha|+|\beta|$ for every $n$.
    By~\Cref{compactmeasure}, after passing to a subsequence, there exists $\omega\in\mBB$ such that $\omega_n(U)\to \omega(U)$ for every $U\in\B^2$.

    We claim that the same subsequence satisfies $\mu_n(U)\to \omega(U)$ and $\eta_n(U)\to \omega(U)$ for every $U\in\B^2$.
    Indeed, for every $U\in\B^2$,
    \[
    |\mu_n(U)-\omega(U)|
    \le
    |\mu_n(U)-\omega_n(U)|+|\omega_n(U)-\omega(U)|
    \le
    \|\mu_n-\omega_n\|+|\omega_n(U)-\omega(U)|\to 0,
    \]
    and similarly $\eta_n(U)\to\omega(U)$.

    We now verify that $\omega\in \cS\cap \cH\cap \cX$. Since $\omega_n\in\cX$, for every $U\in\B^2$ we have $\alpha(U)\le \omega_n(U)\le \beta(U)$. Passing to the limit gives $\alpha(U)\le \omega(U)\le \beta(U)$ for all $U\in\B^2$, so $\omega\in\cX$. Since each $\eta_n\in\cH$, we have $\eta_n(v)=c$. Since $v$ is bounded and measurable and $\eta_n\to\omega$ setwise, it follows that $\omega(v)=\lim_{n\to\infty}\eta_n(v)=c$. Hence $\omega\in\cH$.
    Since each $\mu_n\in\cS$, for every $A\in\B$, we have $$\partial\mu_n(A)=\mu_n(A\times A^c)-\mu_n(A^c\times A)\le \varphi(A).$$ Passing to the limit, using the setwise convergence of $\mu_n$ to $\omega$ on the sets
    $A\times A^c$ and $A^c\times A$, we obtain $\partial\omega(A)\le \varphi(A)$ for all $A\in\B$.
    Thus $\omega\in\cS$.

    Therefore $\omega\in \cS\cap \cH\cap \cX$, so $\omega$ is a feasible submodular flow with cost $c$.
\end{proof}

\section{Extreme flows and measurable residual graphs}\label{sec4}

\subsection{Extreme submodular flows}\label{subsec:extremesubmodularflow}
Let $\Lambda_0:=|\alpha|+|\beta|$.
Recall that \[\F_\varphi(J,\cB,\alpha,\beta):=
        \{\mu\in \mBB:\alpha\le\mu\le\beta,
        \ \partial\mu\le\varphi\}.\]
If \(\mu\in\mathcal F_\varphi(J,\mathcal B,\alpha,\beta)\), then
\(\alpha\le\mu\le\beta\), and hence $|\mu|\le \Lambda_0$.
Thus every feasible flow is absolutely continuous with respect to
\(\Lambda_0\). Identifying such a measure \(\mu\) with its
Radon--Nikodým derivative
$f_\mu:=\frac{\mathrm d\mu}{\mathrm d\Lambda_0}$,
the set
\[
    K_{\Lambda_0}:=
    \{\mu\in M(\mathcal B^2): |\mu|\le\Lambda_0\}
\]
is identified with the closed unit ball of \(L^\infty(\Lambda_0)\).

We equip \(K_{\Lambda_0}\) with the weak-* topology $\sigma(L^\infty(\Lambda_0),L^1(\Lambda_0))$,
that is, the topology in which \(f_i\to f\) if and only if
\[
    \int f_i h\,d\Lambda_0\to \int f h\,d\Lambda_0
    \qquad\text{for every }h\in L^1(\Lambda_0).
\]
By the Banach--Alaoglu theorem, \(K_{\Lambda_0}\) is compact in this topology.
On \(K_{\Lambda_0}\), this topology agrees with the topology of setwise
convergence of measures. Indeed, indicators of measurable sets belong to
\(L^1(\Lambda_0)\), and conversely simple functions are dense in
\(L^1(\Lambda_0)\), while all densities in \(K_{\Lambda_0}\) are uniformly
bounded by \(1\).

The capacity constraints are weak-* closed, since they are equivalent to
\[
    \alpha(U)\le\mu(U)\le\beta(U),
    \qquad U\in\mathcal B^2,
\]
and each map \(\mu\mapsto\mu(U)\) is weak-* continuous. Similarly, for every
\(X\in\mathcal B\), the boundary functional
\[
    \mu\mapsto\partial\mu(X)
    =
    \mu(X\times X^c)-\mu(X^c\times X)
\]
is weak-* continuous. Hence the constraints
\[
    \partial\mu(X)\le\varphi(X),\qquad X\in\mathcal B,
\]
define a weak-* closed subset of \(K_{\Lambda_0}\).

Consequently, \(\mathcal F_\varphi(J,\mathcal B,\alpha,\beta)\) is a weak-*
compact convex set. By the Krein--Milman theorem, it is the weak-* closed
convex hull of its extreme points.

We now characterize the extreme points.
Recall that a point $x$ in a convex set $C$ is an \emphd{extreme point} if there is no two distinct points $y_1,y_2\in C$ such that $x=\frac{1}{2}( y_1+y_2)$.

We begin with some basic calculation rules.
Let $\Lambda=\Lambda_0+\Lambda_0^*$, then any flow $\mu$ with $\alpha\le \mu\le \beta$ is absolutely continuous with respect to $\Lambda$. We denote by $f_\gamma\colon J\times J\to \R$ the \emphd{Radon--Nikodým derivative} for any signed measure $\gamma\ll \Lambda$.
The function $f_\gamma^*\colon J\times J\to \R$, $f_\gamma^*(x,y)=f_\gamma(y,x)$ is called the \emphd{transpose} of $f_\gamma$. Note that $f^*_\gamma=f_{\gamma^*}$. For any two functions $f,g\colon J\times J\to \R$, let $f\vee g=\max\{f,g\}$, $f\wedge g=\min\{f,g\}$, $f_+=f\vee 0$, $f_-=f\wedge 0$ and $f|_E=f\mathbf{1}_E$ for any $E\in\B^2$. Here $\mathbf{1}_E$ is the indicator function of subset $E$.
If the integral makes sense, we define the \emphd{measure induced by} $f$ to be $$f\Lambda \colon \B^2\to\R,\; f\Lambda(X)=\int_X f(x,y)\,\mathrm{d}\Lambda.$$
Functions are identified when they agree $\Lambda$-almost everywhere.

The following proposition is immediate from the Radon--Nikod\'ym representation with respect to the common dominating measure $\Lambda$. 
\begin{prop}\label{lem:basic}
    For any two functions $f,g\colon J\times J\to \R$, flows $\gamma,\gamma_1,\gamma_2$ and measurable $E,E_1,E_2$, the following calculation rules hold:
    \begin{enumerate}[label=(\arabic*)]
        \item\label{property1} $\partial \gamma ^{*}=-\partial \gamma$;
        \item\label{property2} if $\gamma,\gamma_1,\gamma_2\ll \Lambda$, then $f_\gamma\Lambda=\gamma$, $(f_{\gamma_1}\vee f_{\gamma_2} )\Lambda=\gamma_1\vee\gamma_2$ and $(f_{\gamma_1}\wedge f_{\gamma_2}) \Lambda=\gamma_1\wedge\gamma_2$;
        \item\label{property3} $(f\wedge g)^*=f^*\wedge g^*$ and $(\gamma_{1}\wedge \gamma_{2})^{*}=\gamma_{1}^{*}\wedge \gamma_{2}^{*}$;
        \item\label{property4} $(f-g)_+=f-f\wedge g$ and $(\gamma_{1}-\gamma_{2})_{+}=\gamma_{1}-\gamma_{1}\wedge \gamma_{2}$;
        \item\label{property5} $(f_+)^*=(f^*)_+$ and $(\gamma _{+})^{*}=(\gamma ^{*})_{+}$;
        \item\label{property6} $f_\gamma=f_\gamma|_E$ if $f|_{E^c}=0$, and $\gamma=\gamma|_{E}$ if $\gamma$ is supported on $E$;
        \item\label{property7} If $J \times J =E_{1}\sqcup E_{2}$, then $$f\wedge g=f|_{E_1}\wedge g|_{E_1}+f|_{E_2}\wedge g|_{E_2}\quad \textrm{ and } \quad\gamma_{1}\wedge \gamma_{2}=\gamma_{1}|_{E_{1}}\wedge \gamma_{2}|_{E_{1}}+\gamma_{1}|_{E_{2}}\wedge \gamma_{2}|_{E_{2}}.$$
    \end{enumerate}
\end{prop}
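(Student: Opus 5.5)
All seven rules reduce to pointwise identities for the Radon--Nikod\'ym densities with respect to $\Lambda$. Since $\Lambda=\Lambda_0+\Lambda_0^*$ is symmetric ($\Lambda^*=\Lambda$), transposition commutes with taking densities. The plan is to prove each function identity pointwise, and then transfer it to measures via uniqueness of densities.

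We begin with the structural facts. For (1), $\partial\gamma^*(X)=\gamma^*(X\times X^c)-\gamma^*(X^c\times X)=\gamma(X^c\times X)-\gamma(X\times X^c)=-\partial\gamma(X)$. Next we check $f_{\gamma^*}=f_\gamma^*$. For $E\in\B^2$,
\[
\gamma^*(E)=\gamma(E^*)=\int_{E^*}f_\gamma\,\mathrm d\Lambda=\int_E f_\gamma^*\,\mathrm d\Lambda^*=\int_E f_\gamma^*\,\mathrm d\Lambda,
\]
by the change of variables $(x,y)\mapsto(y,x)$ and $\Lambda^*=\Lambda$. The first part of (2), $f_\gamma\Lambda=\gamma$, is the definition of the density.

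The main step is the join/meet part of (2), and this is the step I expect to need the most care. Set $P=\{f_{\gamma_1}\le f_{\gamma_2}\}$. For any partition $X=X_1\sqcup X_2$,
\[
\gamma_1(X_1)+\gamma_2(X_2)=\int_{X_1}f_{\gamma_1}\,\mathrm d\Lambda+\int_{X_2}f_{\gamma_2}\,\mathrm d\Lambda\ \ge\ \int_X (f_{\gamma_1}\wedge f_{\gamma_2})\,\mathrm d\Lambda .
\]
Equality holds for the choice $X_1=X\cap P$, $X_2=X\setminus P$. Hence $\gamma_1\wedge\gamma_2=(f_{\gamma_1}\wedge f_{\gamma_2})\Lambda$. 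The join is handled the same way with the inequalities reversed. In particular $\gamma_1\wedge\gamma_2\ll\Lambda$ and its density is the pointwise minimum, which lets all later measure identities be read off from function identities.

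For (3)--(7), I would first verify the function identities pointwise.
\begin{itemize}
\item For (3): $(f\wedge g)(y,x)=f(y,x)\wedge g(y,x)$.
\item For (4): $(f-g)_+=f-f\wedge g$, checked in the two cases $f\ge g$ and $f<g$.
\item For (5): transposition commutes with $t\mapsto t\vee 0$.
\item For (6): if $f$ vanishes off $E$ then $f=f\mathbf 1_E$.
\item For (7): on $E_i$ the right-hand side reduces to $f\wedge g$, and the two summands have disjoint supports.
\end{itemize}
The measure versions then follow by applying these identities to densities and using (2) together with $f_{\gamma^*}=f_\gamma^*$. For instance, in (3) the density of $(\gamma_1\wedge\gamma_2)^*$ is $(f_{\gamma_1}\wedge f_{\gamma_2})^*=f_{\gamma_1}^*\wedge f_{\gamma_2}^*=f_{\gamma_1^*}\wedge f_{\gamma_2^*}$. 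For (4) and (5) we note that $\gamma_+$ has density $(f_\gamma)_+$, since the Hahn set can be taken as $\{f_\gamma>0\}$. For (6) and (7), the restriction $\gamma|_E$ has density $f_\gamma\mathbf 1_E$.

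All equalities are understood $\Lambda$-a.e., which is exactly the identification made in the text. The only subtlety is the domination hypothesis: every measure involved must be $\ll\Lambda$. I would state this implicitly, as in (2), for all the rules.
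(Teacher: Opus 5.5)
Your proposal is correct and follows the route the paper intends. The paper only asserts that the proposition is immediate from the Radon--Nikod\'ym representation with respect to the common dominating measure $\Lambda$; you supply the details, namely the transposition identity $f_{\gamma^*}=f_\gamma^*$ (which uses $\Lambda^*=\Lambda$), the partition argument identifying $\gamma_1\wedge\gamma_2$ with $(f_{\gamma_1}\wedge f_{\gamma_2})\Lambda$, and the choice of Hahn set $\{f_\gamma>0\}$ for $\gamma_+$.

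One point in (6) should be made explicit: for a signed $\gamma$, ``supported on $E$'' must be read as $|\gamma|(E^c)=0$, since $\gamma(E^c)=0$ alone does not force $f_\gamma=0$ a.e.\ on $E^c$. This reading is automatic for the nonnegative flows where the rule is actually used.
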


To start, we consider particular feasible flows with given marginals.
Given two measures $\sigma,\tau\in\mpB$ with $\sigma(J)=\tau(J)$, the set of all feasible $\sigma$-$\tau$ flows is denoted by \[
\F=\F(J,\B,\alpha,\beta,\sigma,\tau)
 \ :=\ \{\mu\in\mBB:\ \alpha\le \mu\le \beta,\ \mu^1-\mu^2=\sigma-\tau\}.
\]
Similar to $\F_\varphi$, the set $\F$ is a weak-* closed convex subset of $\mBB$.
A flow $\mu\in \F$ is an extreme point of $\F$ if and only if there is no non-zero flow $\gamma\in \mBB$ such that $\mu\pm \gamma\in\F$.
Note that the condition $\mu \pm \gamma \in \F$ requires $\gamma$ to be a circulation.
The capacity constraints $\alpha \le \mu \pm \gamma \le \beta$ are equivalent to $$\pm\gamma \le \mu - \alpha\quad\text{ and }\quad \pm\gamma \le \beta - \mu.$$
Thus we define
\[ \eta=\eta(\mu;\alpha,\beta):=(\beta-\mu)\wedge(\mu-\alpha). \]
Then $\mu$ is an extreme point of $\F$ if and only if there is no non-zero circulation $\gamma$ such that $-\eta\leq \gamma\leq \eta.$

As in the finite case, we can transform this condition concerning signed measures to concern only measures with a generalized notion of \emph{residual graphs} as below.

A nonnegative flow $\mu$ is called \emphd{trivial} if $\mu=\mu\wedge\mu^*$ and is called \emphd{reduced} if $\mu\wedge\mu^*=0$.

By Hahn decomposition we have the following lemma.

\begin{lem}\label{propersupport}
    If $\mu$ is a reduced flow, there exists $E\in \B^2$ such that $E\cap E^*=\emptyset$ and $\mu$ is supported on $E$.
\end{lem}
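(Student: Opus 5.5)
We apply the Hahn decomposition to the signed measure $\mu-\mu^*$ on $(J\times J,\B^2)$. Here $\mu$ is a nonnegative flow, so $\mu,\mu^*\in\mpBB$. The decomposition yields disjoint sets $P,N\in\B^2$ with $P\sqcup N=J\times J$ such that $(\mu-\mu^*)|_P\ge0$ and $(\mu-\mu^*)|_N\le0$. By the definition of the meet, for any $X\in\B^2$ we then have
\[
(\mu\wedge\mu^*)(X)=\mu^*(X\cap P)+\mu(X\cap N),
\]
because splitting $X$ along $P$ and $N$ attains the infimum. Reducedness, $\mu\wedge\mu^*=0$, applied with $X=J\times J$ gives $\mu^*(P)+\mu(N)=0$. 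Since both terms are nonnegative, $\mu(N)=0$ and $\mu^*(P)=0$, that is, $\mu(P^*)=0$. So $\mu$ is supported on $P$ and $\mu(P\cap P^*)=0$.

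The natural candidate is $E:=P\setminus P^*$. This set is measurable, since transposition is a Borel automorphism of $J\times J$ and hence $P^*\in\B^2$. It satisfies $E^*=P^*\setminus P$, which is disjoint from $E\subseteq P$, so $E\cap E^*=\emptyset$. Finally,
\[
\mu(E^c)\le\mu(N)+\mu(P\cap P^*)=0,
\]
so $\mu$ is supported on $E$.

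The only point needing care is the identification of $\mu\wedge\mu^*$ with $\mu^*|_P+\mu|_N$. The bound $\le$ holds by taking $X_1=X\cap P$ and $X_2=X\cap N$. For $\ge$, take any partition $X=X_1\sqcup X_2$ and compare $\mu(X_1)+\mu^*(X_2)$ with $\mu^*(X\cap P)+\mu(X\cap N)$ piece by piece on $P$ and on $N$, using the sign of $\mu-\mu^*$ on each piece. This is the standard computation, and no further difficulty is expected. (On diagonal points, $P\cap P^*\supseteq P\cap\Delta_J$ is removed, consistent with $\mu|_{\Delta_J}\le\mu\wedge\mu^*=0$.)
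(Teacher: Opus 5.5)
Your proof is correct and follows the paper's argument. The paper likewise takes a Borel partition $J\times J=E_1\sqcup E_2$ with $\mu(E_2)=\mu^*(E_1)=0$ (your $P,N$, obtained from the Hahn decomposition) and sets $E:=E_1\setminus E_1^*$; you additionally check that the Hahn decomposition of $\mu-\mu^*$ attains the infimum defining $\mu\wedge\mu^*$, a step the paper leaves implicit.
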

\begin{proof}
    Since $\mu\wedge\mu^*=0$, there exists a Borel partition $J\times J=E_1\sqcup E_2$ such that $\mu(E_2)=\mu^*(E_1)=0$. Then $$\mu(E_1\setminus E_1^*)=\mu(J\times J)-\mu(E_1^*)=\mu(J\times J)-\mu^*(E_1)=\mu(J\times J),$$
    so $\mu$ is supported on $E:= E_1\setminus E_1^*$, and clearly $E\cap E^*=\emptyset$. 
\end{proof}

Given a flow $\mu \in \F$, the \emphd{measurable residual graph} of $\mu$ is defined as 
\begin{equation}
R=R(\mu;\alpha,\beta):= \eta(\mu;\alpha,\beta)+\eta(\mu;\alpha,\beta)^*.
\end{equation}
The \emphd{trivial circulation graph} of $\mu$ is defined as $$T=T(\mu;\alpha,\beta):=\eta(\mu;\alpha,\beta)\wedge \eta(\mu;\alpha,\beta)^*.$$

As the classical residual graph, the measure $R$ describes how much a given flow can be adjusted in either direction—increasing ($\eta$) or decreasing ($\eta^*$); while $T$ captures flows whose weight coincide on every pair of arcs that has opposite direction, which vanishes for oriented graphs.
If $J$ is finite, $f_R$ is the weight function of the residual graph, and a reduced circulation in $R$ corresponds to some cycles with positive weight in the residual graph. Further, $T$ corresponds to cycles of length 2 with positive weight in the residual graph.

\begin{lem}\label{lem:extremeflow}
    A flow $\mu\in\F$ is an extreme point of $\F$ if and only if $T=0$ and there is no reduced circulation $\gamma$ such that $0< \gamma\leq R$.
\end{lem}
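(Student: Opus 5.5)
We use the reformulation stated just before the lemma: $\mu$ is extreme in $\F$ if and only if there is no non-zero circulation $\gamma$ with $-\eta\le\gamma\le\eta$. Note $\eta\ge 0$ for $\mu\in\F$, and $\eta,\eta^*,R,T\ll\Lambda$. The plan is to show that such a $\gamma$ exists exactly when $T\neq 0$ or a reduced circulation $0<\gamma\le R$ exists. We argue with densities, using \Cref{lem:basic}.

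\emph{Sufficiency of the two obstructions.} Suppose first $T\ne0$. Then $T\le\eta$ and $T=T^*$ by \Cref{lem:basic}\ref{property3}, so $T$ is a circulation, since its two marginals are swapped by transposition. Also $-\eta\le -T\le T\le\eta$, so $\gamma=T$ works.

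Suppose instead $T=0$ and $0<\gamma\le R$ is a reduced circulation. By \Cref{propersupport}, $\gamma$ is supported on some $E$ with $E\cap E^*=\emptyset$. Since $T=\eta\wedge\eta^*=0$, Hahn gives a partition $J\times J=P\sqcup P^c$ with $\eta(P^c)=0$ and $\eta^*(P)=0$. Thus on $P$ we have $R=\eta$, and on $P^c$ we have $R=\eta^*$.

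Set $\gamma' := \gamma|_P-(\gamma|_{P^c})^*$. This is $\gamma$ with the part lying on $\eta^*$ reversed and negated. Then $\gamma|_P\le\eta$, and $(\gamma|_{P^c})^*\le(\eta^*)^*=\eta$. Because $E\cap E^*=\emptyset$, the two pieces live on the disjoint sets $E\cap P$ and $(E\cap P^c)^*$. Hence $-\eta\le\gamma'\le\eta$, and $\gamma'\ne0$.

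We still need $\gamma'$ to be a circulation. By \Cref{lem:basic}\ref{property1},
\[
\partial\gamma'=\partial\gamma|_P+\partial\gamma|_{P^c}=\partial\gamma=0.
\]
So $\mu$ is not extreme.

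\emph{Necessity.} Assume $T=0$ and that $\gamma\ne0$ is a circulation with $-\eta\le\gamma\le\eta$. Write $\gamma=\gamma_+-\gamma_-$ and set $\rho:=\gamma_++\gamma_-^*\ge0$. It is nonzero and
\[
\partial\rho=\partial\gamma_+-\partial\gamma_-=\partial\gamma=0,
\]
so $\rho$ is a circulation. Moreover $\gamma_+\le\eta$ and $\gamma_-^*\le\eta^*$, hence $\rho\le R$.

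It remains to make $\rho$ reduced. Replace $\rho$ by $\rho':=\rho-\rho\wedge\rho^*$. Since $\rho\wedge\rho^*$ is symmetric, it is a circulation, so $\rho'$ is a circulation with $0\le\rho'\le\rho\le R$.

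$\rho'$ is reduced: pointwise $(a-a\wedge b)\wedge(b-a\wedge b)=0$, applied with $a=f_\rho$ and $b=f_\rho^*$, using \Cref{lem:basic}\ref{property2},\ref{property3}.

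\textbf{Main obstacle.} The hard step is showing $\rho'\ne0$, i.e.\ that $\rho$ is not trivial. If $\rho=\rho^*$, then $f_{\gamma_+}+f_{\gamma_-}^*=f_{\gamma_+}^*+f_{\gamma_-}$ $\Lambda$-a.e. On the support of $\gamma_+$ we have $f_{\gamma_-}=0$, and there $f_{\gamma_+}\le f_\eta$. We then want to use $T=0$, i.e.\ $f_\eta\wedge f_\eta^*=0$ a.e.

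The expected contradiction runs as follows. On $\{f_{\gamma_+}>0\}$ we have $f_\eta>0$, so $f_\eta^*=0$, forcing $f_{\gamma_+}^*=0$ and $f_{\gamma_-}^*=0$ there. Symmetry then gives $f_{\gamma_+}=0$ a.e. Similarly $\gamma_-=0$, so $\gamma=0$, a contradiction. This careful a.e.\ case analysis of densities is where the work lies.
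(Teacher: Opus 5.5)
Your proof is correct and is essentially the paper's argument. For the forward direction you use $T$ itself as a perturbation, and then split a reduced circulation $0<\gamma\le R$ into its $\eta$-part and its reversed $\eta^*$-part (the paper's $f_1\Lambda-f_2^*\Lambda$); for the converse your $\rho=\gamma_++\gamma_-^*$ is exactly the paper's $\delta_1+\delta_2^*$. The only difference is that you pass to $\rho-\rho\wedge\rho^*$ and then check by a density computation that it is nonzero, whereas the paper notes directly (via disjoint supports) that $\rho$ is already reduced when $T=0$, so your detour is valid but unnecessary.
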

\begin{proof}
First assume $\mu$ is an extreme point of $\F$.
    Since $T=T^*$, \Cref{lem:basic} \ref{property1} gives $2\partial T=\partial T+\partial T^*=0$, so $T$ is a circulation. Assume $T\neq 0$.
    Recall that $\eta = (\beta-\mu)\wedge(\mu-\alpha) \ge 0$ since $\alpha \le \mu \le \beta$. By definition, $T = \eta \wedge \eta^*$, which implies $0 \le T \le \eta$, and thus $-\eta \le \pm T \le \eta$.
    Because $\eta \le \beta - \mu$ and $\eta \le \mu - \alpha$, the bounds $-\eta \le \pm T \le \eta$ guarantee the capacity constraints $\alpha \le \mu \pm T \le \beta$. Furthermore, $T$ is a circulation, contradicting that $\mu$ is extreme. Thus $T = \eta \wedge \eta^* = 0$ and $\eta$ is a reduced flow.
    
    Suppose there is a nonzero reduced circulation $\gamma$ such that $0< \gamma\leq R$. Let $$f_1:=f_\gamma\wedge f_\eta, \qquad f_2:=f_{\gamma}\wedge f_{\eta^*}.$$ 
    Then $f_1\Lambda\leq \eta$ and $f_2\Lambda \leq \eta^*$.
    Since $\eta$ is reduced, 
    $$f_\gamma=f_\gamma\wedge f_R=(f_\gamma\wedge f_R)|_{\{f_\eta>0\}}+(f_\gamma\wedge f_R)|_{\{f_{\eta^*}>0\}}=f_1+f_2.$$
    Thus $f_\gamma\Lambda=f_1\Lambda+f_2\Lambda$.
    Define $$\mu_1:=\mu+f_1\Lambda-f_2^*\Lambda,\qquad \mu_2:=\mu-f_1\Lambda+f_2^*\Lambda.$$
    Then $$\partial\mu_1=\partial\mu_2=\partial \mu+\partial (f_1\Lambda)+\partial (f_2\Lambda)=\partial\mu +\partial \gamma=\partial\mu,$$ and similarly $\partial\mu_2=\partial\mu$. Thus $\mu_1,\mu_2\in\F$ and $\mu=(\mu_1+
    \mu_2)/2$. Extremality gives $\mu_1=\mu_2$, hence $f_1=f_2^*$, which implies
    $$\gamma\wedge\gamma^*=(f_1\Lambda+f_2\Lambda)\wedge(f_1^*\Lambda+f_2^*\Lambda)=\gamma,$$ contradicting that $\gamma$ is reduced.

    Now we assume $T=0$ and there is no nonzero reduced circulation $\gamma$ such that $0\leq \gamma\leq R$. Suppose, for contradiction, that $\mu$ is not an extreme point of $\F$. Then there exist distinct flows $\mu_1,\mu_2\in\F$ such that $\mu=\dfrac{\mu_1+\mu_2}{2}$, which implies $\mu_1-\mu = \mu-\mu_2$. 
    
    Let $\delta_1:=(\mu_1-\mu)_+$ and $\delta_2:=(\mu-\mu_1)_+$ be the positive and negative variations of the signed measure $\mu_1-\mu$, meaning $\mu_1-\mu = \delta_1-\delta_2$.
    Because $\mu_1 \le \beta$ and $\mu_2 \ge \alpha$, we can bound $\delta_1$ simultaneously in two ways:
    \[ \delta_1 = (\mu_1-\mu)_+ \le (\beta-\mu)_+ = \beta-\mu \quad \text{and} \quad \delta_1 = (\mu-\mu_2)_+ \le (\mu-\alpha)_+ = \mu-\alpha. \]
    Therefore, $\delta_1 \le (\beta-\mu)\wedge(\mu-\alpha) = \eta$. Taking the transpose yields $\delta_1^* \le \eta^*$. 
    Thus, $\delta_1 \wedge \delta_1^* \le \eta \wedge \eta^* = T$. Since $T=0$ by our assumption, we have $\delta_1 \wedge \delta_1^* = 0$, meaning $\delta_1$ is a reduced flow. By symmetry, $\delta_2\leq \eta$ and $\delta_2$ is also a reduced flow.

    By the Hahn decomposition theorem, $\delta_1$ and $\delta_2$ are mutually singular and there exist disjoint Borel sets $E_1, E_2 \subseteq J \times J$ such that $\delta_i$ is supported on $E_i$ for $i=1,2$. Since $\delta_1$ and $\delta_2$ are reduced, by \Cref{propersupport} we can additionally require $E_1 \cap E_1^* = \emptyset$ and $E_2 \cap E_2^* = \emptyset$. 

    We now construct a new flow $\delta := \delta_1 + \delta_2^*$. 
    First, since $\mu_1, \mu \in \F$ share identical marginals, we have $\partial(\mu_1 - \mu) = \partial\delta_1 - \partial\delta_2 = 0$. By \Cref{lem:basic} \ref{property1}, $\partial\delta = \partial\delta_1 + \partial\delta_2^* = \partial\delta_1 - \partial\delta_2 = 0$, meaning $\delta$ is a circulation.
    Second, since $\delta_1 \le \eta$ and $\delta_2 \le \eta$ (which implies $\delta_2^* \le \eta^*$), we inherently have $0 \le \delta \le \eta + \eta^* = R$.
    Finally, we verify that $\delta$ is reduced. The flow $\delta$ is supported on $E_1 \cup E_2^*$, and its transpose $\delta^*$ is supported on $E_1^* \cup E_2$. By our assumption, $$E_1 \cap E_1^* = E_2 \cap E_2^* = E_1 \cap E_2 = (E_2^* \cap E_1^*) = (E_1 \cap E_2)^* = \emptyset,$$ and thus the intersection of their supports
    $(E_1 \cup E_2^*) \cap (E_1^* \cup E_2) = \emptyset.$

    Therefore, $\delta \wedge \delta^* = 0$, meaning $\delta$ is a nonzero reduced circulation satisfying $0 \le \delta \le R$, a contradiction.
    \end{proof}

Now we are ready to give a characterization of extreme submodular flows. By considering common basic minorizing measures, we can transform the submodular flow problem to a flow problem with fixed marginals.

\begin{thm}\label{thm:extremesubmodularflow}
    For any $\mu\in \F_\varphi(J,\B,\alpha,\beta)$, let $\eta=\eta(\mu;\alpha,\beta)$, $R=R(\mu;\alpha,\beta)$ and $T=T(\mu;\alpha,\beta)$. Then    
    $\mu$ is an extreme point of $\F_\varphi(J,\B,\alpha,\beta)$ if and only if the following two statements hold:
    \begin{enumerate}
        \item\label{cond1} $\bmm(\kappa_{-\eta,\eta})\cap\bmm(\varphi-\partial\mu)\cap \bmm((\varphi-\partial\mu)^c)=\{0\}$;
        \item\label{cond2} $T=0$, and there is no reduced circulation $\gamma$ such that $0< \gamma\leq R$.
    \end{enumerate}
Here $(\varphi-\partial\mu)^c(X) := (\varphi-\partial\mu)(X^c)$ for any $X\in\B$, a submodular set function whose value for $\emptyset$ is 0.    
\end{thm}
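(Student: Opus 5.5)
Set $\psi:=\varphi-\partial\mu$. The plan is to reduce extremality in $\F_\varphi(J,\B,\alpha,\beta)$ to extremality in the fixed-marginal polytope $\F=\F(J,\B,\alpha,\beta,\sigma,\tau)$ with $\sigma:=(\partial\mu)_+$ and $\tau:=(\partial\mu)_-$. These are nonnegative with $\sigma(J)=\tau(J)$ because $\partial\mu(J)=0$, and $\mu\in\F$. The reduction is a characterization of the admissible perturbation directions $\gamma$ with $\mu\pm\gamma\in\F_\varphi$.

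\textbf{Key claim.} For $\gamma\in\mBB$, we have $\mu\pm\gamma\in\F_\varphi(J,\B,\alpha,\beta)$ if and only if
\[
-\eta\le\gamma\le\eta\qquad\text{and}\qquad \partial\gamma\in\bmm(\psi)\cap\bmm(\psi^c).
\]

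For the capacity part: $\mu+\gamma\le\beta$ and $\mu-\gamma\ge\alpha$ give $\gamma\le(\beta-\mu)\wedge(\mu-\alpha)=\eta$. Symmetrically $-\gamma\le\eta$. Conversely, $\pm\gamma\le\eta$ gives $\alpha\le\mu\pm\gamma\le\beta$.

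For the boundary part: $\partial(\mu\pm\gamma)\le\varphi$ means $\partial\gamma\le\psi$ and $-\partial\gamma\le\psi$. Since $\partial\gamma(J)=0$, we have $\partial\gamma(X^c)=-\partial\gamma(X)$. So the second inequality is equivalent to $\partial\gamma(X)\le\psi(X^c)=\psi^c(X)$ for all $X$.

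We also need these to be genuine base polytopes. $\psi$ is bounded, continuous and submodular, with $\psi(\emptyset)=\psi(J)=0$. Then $\psi^c$ is submodular because complementation swaps $\cup$ and $\cap$, and $\psi^c(\emptyset)=\psi^c(J)=0$. Hence the condition $\partial\gamma(J)=0$ is exactly the base condition for both $\bmm(\psi)$ and $\bmm(\psi^c)$.

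Finally, by \Cref{partialinbmm} applied to the network system $(J,\B,-\eta,\eta)$, whose cut function is $\kappa_{-\eta,\eta}$, we get
\[
\{\partial\gamma:-\eta\le\gamma\le\eta\}=\bmm(\kappa_{-\eta,\eta}).
\]
Therefore the set of boundaries of admissible directions is exactly $\bmm(\kappa_{-\eta,\eta})\cap\bmm(\psi)\cap\bmm(\psi^c)$.

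\textbf{Necessity.} Assume $\mu$ is extreme in $\F_\varphi$.
\begin{itemize}
\item If the triple intersection contains some $\rho\ne0$, \Cref{partialinbmm} yields $\gamma$ with $-\eta\le\gamma\le\eta$ and $\partial\gamma=\rho$. Then $\gamma\neq0$, and by the key claim $\mu\pm\gamma\in\F_\varphi$, contradicting extremality. This gives (1).
\item Every $\nu\in\F$ has $\partial\nu=\partial\mu\le\varphi$, so $\F\subseteq\F_\varphi$ and $\mu$ is extreme in $\F$. Then \Cref{lem:extremeflow} gives (2).
\end{itemize}

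\textbf{Sufficiency.} Assume (1) and (2), and suppose $\mu=\tfrac12(\mu_1+\mu_2)$ with $\mu_1,\mu_2\in\F_\varphi$. Put $\gamma:=\mu_1-\mu=\mu-\mu_2$, so that $\mu\pm\gamma\in\F_\varphi$.
\begin{itemize}
\item By the key claim, $\partial\gamma$ lies in the triple intersection, so (1) forces $\partial\gamma=0$.
\item Hence $\partial\mu_1=\partial\mu_2=\partial\mu$, so $\mu_1,\mu_2\in\F$.
\item By (2) and \Cref{lem:extremeflow}, $\mu$ is extreme in $\F$, so $\mu_1=\mu_2$.
\end{itemize}

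No step is really an obstacle. The only care needed is the translation of $-\partial\gamma\le\psi$ into $\partial\gamma\le\psi^c$, which relies on $\partial\gamma(J)=0$, and checking that $\psi$ and $\psi^c$ satisfy the hypotheses (submodular, vanishing at $\emptyset$ and $J$) needed to speak of their base polytopes.
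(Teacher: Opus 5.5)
Your proposal is correct and follows the paper's proof essentially step for step. It uses the same reduction of admissible perturbations to $-\eta\le\gamma\le\eta$ together with $\partial\gamma\in\bmm(\psi)\cap\bmm(\psi^c)$, the same use of \Cref{partialinbmm} to identify $\bmm(\kappa_{-\eta,\eta})$, and the same passage to the fixed-marginal set $\F\subseteq\F_\varphi$ via \Cref{lem:extremeflow}; your explicit checks that $\psi^c$ is submodular and vanishes at $\emptyset$ and $J$ fill in details the paper only asserts.
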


\begin{proof}
    For any $\mu\in \F_\varphi(J,\B,\alpha,\beta)$, $\mu$ is an extreme point if and only if there is no $\gamma\in\mBB$ such that $\alpha\leq \mu\pm\gamma\leq \beta$ and $\partial\mu\pm\partial\gamma\leq \varphi$. The first condition is equivalent to say that $-\eta\leq \gamma\leq \eta,$ and the second condition is equivalent to say that $-(\varphi-\partial\mu) \leq \partial\gamma\leq \varphi-\partial\mu.$
    Let \[
        \psi:=\varphi-\partial\mu,
        \qquad
        \psi^c(X):=\psi(X^c).
\]
    Then both $\psi$ and $\psi^c$ are bounded continuous submodular set functions.
    
    For any $X\in\B$, since $\partial\gamma(X)+\partial\gamma(X^c)=0$ and
    $$\partial\gamma(X)+\psi(X)=\psi^c(X^c)-\partial\gamma(X^c). $$
    Thus the second condition is equivalent to the fact that $\partial\gamma\in \bmm(\psi)\cap \bmm(\psi^c)$.

    Assume $\mu$ is an extreme point.
    By~\Cref{partialinbmm}, we have $$\bmm(\kappa_{-\eta,\eta})=\{\partial \gamma \;:\;\gamma\in\mBB,\; -\eta\leq \gamma\leq \eta\}.$$
    Then $\bmm(\kappa_{-\eta,\eta})\cap\bmm(\psi)\cap \bmm(\psi^c)=\{0\}$, or otherwise there exists a nonzero flow $\gamma$ such that $-\eta\leq \gamma\leq \eta$ and $\partial\gamma\in \bmm(\psi)\cap \bmm(\psi^c)$, a contradiction. Further, since $$\F:=\F(J,\B,\alpha,\beta,(\partial\mu)_+,(\partial\mu)_-)\subseteq \F_\varphi,$$ $\mu$ is also an extreme point of $\F$. By~\Cref{lem:extremeflow}, condition~\ref{cond2} in the theorem holds.

    Conversely, assume the two conditions in the theorem hold and $\mu$ is not an extreme point. By the previous discussion, there exists a nonzero flow $\gamma$ such that $-\eta\leq \gamma\leq \eta$ and $-(\varphi-\partial\mu) \leq \partial\gamma\leq \varphi-\partial\mu$. Since $\bmm(\kappa_{-\eta,\eta})\cap\bmm(\psi)\cap \bmm(\psi^c)=\{0\}$, we have $\partial\gamma=0$ and thus $\mu\pm\gamma\in\F$. However, by~\Cref{lem:extremeflow} and condition~\ref{cond2}, $\mu$ is an extreme point of $\F$ and thus $\gamma=0$, a contradiction.
\end{proof}

\subsection{Extreme reduced flows}

In classical network flow theory, if the lower bound of edge capacity is 0, it is standard to consider \emphd{net flows} in order to eliminate transport redundancies. On a directed graph, simultaneous flows in opposite directions between two adjacent nodes (i.e., trivial $2$-cycles) do not contribute to the flow while increasing the cost. To remove this redundancy, one typically subtracts the weight on opposite edges simultaneously until one direction become zero.
We extend this notion to the measurable setting as follows. 
Fix a network system $(J,\B,0,\beta)$.

\begin{defi}
    The \emphd{reduction map} is defined as
    \[
    P\colon \mpBB\to \mpBB,
    \qquad
    P(\mu):=\mu-\mu\wedge \mu^*=(\mu-\mu^*)_+.
    \]
    We define $\mu\sim\nu$ if $P(\mu)=P(\nu)$, and then $\sim$ is an equivalence relation on $\mpBB$.
    Notice that $$P(\mu) - P(\mu)^* = (\mu-\mu^*)_+ - (\mu-\mu^*)_- = \mu - \mu^*.$$ Since $P(\mu)$ and $P(\mu)^*$ are mutually singular, $P(\mu) = P(\nu)$ if and only if $\mu - \mu^* = \nu - \nu^*$. Thus the equivalence relation $\sim$ is precisely the equivalence modulo the kernel of the linear operator $$L(\mu) := \mu - \mu^*.$$
    The equivalence class containing $\mu$ is denoted by $[\mu]$. Note that $[P(\mu)]=[\mu]$.
    
    The flow $P(\mu)$ is called the \emphd{reduced flow} of $\mu$, serving as the canonical non-negative representative of its equivalence class. For fixed $\sigma,\tau\in\mpB$ with $\sigma(J)=\tau(J)$, we denote all reduced feasible $\sigma$-$\tau$ flows by the quotient space:
    \[
    \widetilde{\F}= \widetilde{\F}(J,\B,\beta,\sigma,\tau):=\F(J,\B,0,\beta,\sigma,\tau)/\sim.
    \]
    
    Because $\sim$ is induced by the linear map $L$, the quotient space $\widetilde{\F}$ is affinely isomorphic to the image $L(\F)$, a convex subset of signed measures.
    Through this isomorphism, $\widetilde{\F}$ naturally inherits a rigorous convex structure, where convex combinations are defined by $$c[\mu_1] + (1-c)[\mu_2] := [c\mu_1 + (1-c)\mu_2],\qquad 0 \le c \le 1,$$ independent of the choice of representatives.
    We equip the quotient space $\widetilde{\F}$ with the metric induced by the total variation norm.
    That is, $$d( [\mu],[\nu] )= \inf\{  \| \mu_1-\nu_1 \|  \;:\;  [\mu_1]= [\mu],\; [\nu_1]= [\nu] \} .$$
    Then $\widetilde{\F}$ is a Banach space and it is easy to see that $d( [\mu],[\nu] )=\|P(\mu)-P(\nu) \| $.
\end{defi}

We give a characterization of extreme flows in $\widetilde{\F}$ as follows.
Given $[\mu]\in \widetilde{\F}$, define the residual graph as
$$R=R([\mu];\beta):= (\beta - P(\mu) + P(\mu)^*) \wedge (\beta^* - P(\mu)^* + P(\mu)).$$

\begin{lem}\label{lem:extremereducedflow}
For any $[\mu]\in \widetilde{\F}(J,\B,\beta,\sigma,\tau)$, $[\mu]$ is an extreme point of 
$\widetilde{\F}(J,\B,\beta,\sigma,\tau)$ if and only if there is no reduced circulation 
$\gamma$ such that $0<\gamma\le R([\mu];\beta).$
\end{lem}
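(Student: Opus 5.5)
The plan is to reduce extremality in the quotient $\widetilde{\F}$ to a statement about antisymmetric signed measures, using the linear map $L(\mu)=\mu-\mu^*$. By the definition of $\sim$, $\widetilde{\F}$ is affinely isomorphic to $L(\F)$. So $[\mu]$ fails to be extreme if and only if there is a nonzero $\delta$ in the direction space with $L(\mu)\pm\delta\in L(\F)$. Write $\rho:=P(\mu)$ and $A:=L(\mu)=\rho-\rho^*$.

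\emph{Step 1: describe $L(\F)$ intrinsically.} I claim that an antisymmetric signed measure $X$ (i.e.\ $X^*=-X$) lies in $L(\F)$ if and only if $\partial X=\sigma-\tau$ and $X\le\beta$.
\begin{itemize}
\item Necessity. If $X=L(\nu)$ with $\nu\in\F$, then $\partial X=\partial\nu-\partial\nu^*=2\partial\nu$, using \Cref{lem:basic}~\ref{property1}; this only rescales the marginal condition, and I will fix the normalization in the write-up. Moreover $X\le X_+=P(\nu)\le\nu\le\beta$.
\item Sufficiency. Take $\nu:=X_+$. Then $L(\nu)=X_+-X_-=X$, because $(X_+)^*=X_-$ by \Cref{lem:basic}~\ref{property5} and antisymmetry. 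Also $0\le\nu\le\beta$, since $X\le\beta$ and $\beta\ge0$.
\end{itemize}
In particular $\partial P(\nu)=\partial\nu$, because $\nu\wedge\nu^*$ is symmetric and hence a circulation. So every class has the reduced representative $P(\nu)$ in $\F$.

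\emph{Step 2: translate the perturbation condition.} Take $\delta$ with $\delta^*=-\delta$. By Step 1, $A\pm\delta\in L(\F)$ means $\partial\delta=0$ together with
\[
A+\delta\le\beta \quad\text{and}\quad A-\delta\le\beta .
\]
Transposing the second inequality and using $A^*=-A$, $\delta^*=-\delta$ gives $-A+\delta\le\beta^*$. Hence $\delta\le(\beta-A)\wedge(\beta^*+A)=R([\mu];\beta)$. The same argument with $-\delta$ gives $-\delta\le R$.

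Conversely, $-R\le\delta\le R$ yields both displayed inequalities back. Note that $R^*=R$, since transposition swaps the two terms of the meet (\Cref{lem:basic}~\ref{property3}). Therefore $[\mu]$ is extreme if and only if there is no nonzero antisymmetric circulation $\delta$ with $-R\le\delta\le R$.

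\emph{Step 3: pass to reduced circulations.} Given such a $\delta\ne0$, set $\gamma:=\delta_+$. Then $\gamma^*=\delta_-$, and $\gamma$ and $\gamma^*$ are mutually singular, so $\gamma$ is reduced. Since $\delta\neq0$ is antisymmetric, $\gamma\ne0$. From $0=\partial\delta=\partial\gamma-\partial\gamma^*=2\partial\gamma$, $\gamma$ is a circulation. Finally $\gamma=\delta_+\le R$, because $R\ge0$.

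Conversely, given a reduced circulation $0<\gamma\le R$, set $\delta:=\gamma-\gamma^*$. It is an antisymmetric circulation. It is nonzero, since $\gamma\wedge\gamma^*=0$ and $\gamma\neq0$. It satisfies $\delta\le\gamma\le R$ and $-\delta\le\gamma^*\le R^*=R$.

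The main obstacle is the bookkeeping in Steps 1--2. One must check carefully that feasibility of a class is exactly $X\le\beta$ for the antisymmetric representative, including the lattice identities for $\pm$-parts under transposition and the normalization of the marginal constraint. One must also confirm that the quotient's convex structure makes "extreme" equivalent to the absence of a symmetric perturbation $A\pm\delta$. Step 3 is then routine.
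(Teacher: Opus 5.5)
Your proposal is correct and follows essentially the same route as the paper. You identify $\widetilde{\F}$ with $L(\F)$, described as antisymmetric measures with $\partial X=2(\sigma-\tau)$ and $X\le\beta$; this is equivalent to the paper's $X_+\le\beta$ since $\beta\ge0$, and the factor $2$ is the normalization you flagged. You then turn a perturbation $L(\mu)\pm\delta$ into a nonzero antisymmetric circulation bounded by $R$, and pass to $\delta_+$ (conversely $\gamma-\gamma^*$). Your two-sided bound $-R\le\delta\le R$ is equivalent to the paper's one-sided $\gamma\le R$ because $R^*=R$ and $\delta^*=-\delta$, so the difference is purely cosmetic.
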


\begin{proof}
Let $\F:=\F(J,\B,0,\beta,\sigma,\tau)$. Consider the linear map 
$L:\mBB\to\mBB$ defined by $L(\theta):=\theta-\theta^*$. By the definition of the equivalence relation, 
$[\theta_1]=[\theta_2]$ if and only if $L(\theta_1)=L(\theta_2)$. Hence $\widetilde{\F}$ is affinely 
isomorphic to $L(\F)$. Let
\[
    \G:=\{\nu\in\mBB:\nu^*=-\nu,\ \partial\nu=2(\sigma-\tau),\ \nu_+\le\beta\}.
\]
We claim that $L(\F)=\G$. Indeed, if $\theta\in\F$, then $L(\theta)^*=-L(\theta)$, 
$\partial L(\theta)=\partial(\theta-\theta^*)=2\partial\theta=2(\sigma-\tau)$, and 
$L(\theta)_+=(\theta-\theta^*)_+=P(\theta)\le\theta\le\beta$. Hence $L(\theta)\in\G$. Conversely, 
if $\nu\in\G$, then $\nu^*=-\nu$ and $v_-=(v_+)^*$. Thus $\nu=\nu_+-(\nu_+)^*$ and 
$\partial\nu=2\partial\nu_+$. Since $\partial\nu=2(\sigma-\tau)$, we get 
$\partial\nu_+=\sigma-\tau$. Together with $0\le\nu_+\le\beta$, this gives $\nu_+\in\F$, and 
$L(\nu_+)=\nu$. Therefore $L(\F)=\G$.

It follows that $[\mu]$ is extreme in $\widetilde{\F}$ if and only if $\nu:=L(\mu)$ is extreme in $\G$. 
Replacing $\mu$ by its reduced representative $P(\mu)$, we may assume that $\mu=P(\mu)$. Then 
$\nu=\mu-\mu^*$, $\nu_+=\mu$, and
\[
    R=R([\mu];\beta)
    =
    (\beta-\mu+\mu^*)\wedge(\beta^*-\mu^*+\mu)
    =
    (\beta-\nu)\wedge(\beta^*+\nu).
\]
Note that $\nu$ is not extreme in $\G$ if and only if there 
exists a nonzero signed measure $\gamma$ such that $\nu\pm\gamma\in\G$. This is equivalent to 
$$\gamma^*=-\gamma, \quad\partial\gamma=0,\quad (\nu\pm\gamma)_+\le\beta.$$ Since 
$\beta\ge0$, the inequalities $(\nu\pm\gamma)_+\le\beta$ are equivalent to $\nu\pm\gamma\le\beta$,  
and hence to
$\gamma\le\beta-\nu$ and $-\gamma\le\beta-\nu$. Taking the transpose of the second 
inequality gives $\gamma\le\beta^*+\nu$. Thus $[\mu]$ is extreme in $\widetilde{\F}$ if and only if there 
exists no nonzero signed measure $\gamma$ such that $\gamma^*=-\gamma$, $\partial\gamma=0$ and $\gamma\le(\beta-\nu)\wedge(\beta^*+\nu)=R$.

Assume $[\mu]$ is not extreme, pick such $\gamma$ and set $\rho:=\gamma_+$. Since $\gamma^*=-\gamma$, we have $\gamma=\rho-\rho^*$ and $\rho$ is reduced. 
Moreover, $\partial\gamma=\partial(\rho-\rho^*)=2\partial\rho$, so $\rho$ is a reduced circulation. Since both $\beta-\nu$ and $\beta^*+\nu$ are 
nonnegative measures, the inequalities $\gamma\le\beta-\nu$ and $\gamma\le\beta^*+\nu$ imply 
$\rho=\gamma_+\le(\beta-\nu)\wedge(\beta^*+\nu)=R$. Also $\gamma\ne0$ implies 
$\rho\ne0$, a contradiction.

Conversely, suppose that there exists a reduced circulation $\rho$ such that 
$0<\rho\le R$. Let $\gamma:=\rho-\rho^*$. Then $\gamma^*=-\gamma$, 
$\partial\gamma=2\partial\rho=0$, and $\gamma\ne0$. Since 
$\rho\le R$, taking transpose gives 
$\rho^*\le R$. Hence $$\gamma=\rho-\rho^*\le\rho\le\beta-\nu \quad \text{ and }\quad -\gamma=\rho^*-\rho\le\rho^*\le\beta-\nu.$$
Therefore $\nu\pm\gamma\le\beta$, and since $\beta\ge0$, this implies $(\nu\pm\gamma)_+\le\beta$. Together with $\gamma^*=-\gamma$ and $\partial\gamma=0$, we get 
$\nu\pm\gamma\in\G$, Thus $\nu$ is not an extreme point of $\G$, and consequently $[\mu]$ is not an 
extreme point of $\widetilde{\F}$. This proves the theorem.
\end{proof}

Since deleting a trivial flow from a flow $\mu$ does not change its boundary, the same reduction procedure applies to submodular flows with zero lower capacity. 
Fix a submodular network system $(J,\B,0,\beta,\varphi)$ and write
\[
    \F_\varphi(J,\B,\beta)
    :=
    \{\mu\in\mpBB:0\le\mu\le\beta,\ \partial\mu\le\varphi\} \quad \text{ and } \quad \widetilde{\F}_\varphi(J,\B,\beta)
    :=
    \F_\varphi(J,\B,\beta)/{\sim}.
\]

\begin{thm}\label{thm:reducedextremesubmodularflow}
Let $[\mu]\in\widetilde{\F}_\varphi(J,\B,\beta)$, and let
\[
    \psi:=\varphi-\partial\mu,
    \qquad
    \psi^c(X):=\psi(X^c),\quad X\in\B,
\]
and $R:=R([\mu];\beta)$.
Then $[\mu]$ is an extreme point of 
$\widetilde{\F}_\varphi(J,\B,\beta)$ if and only if the following two statements hold:
\begin{enumerate}
    \item\label{rcond1} 
        $\bmm(\kappa_{0,R})
        \cap
        \bmm(\psi)
        \cap
        \bmm(\psi^c)
        =
        \{0\};$
    \item\label{rcond2} there is no reduced circulation $\rho$ such that $0< \rho\le R.$
\end{enumerate}
\end{thm}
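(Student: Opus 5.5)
We follow the pattern of \Cref{thm:extremesubmodularflow}, but carry out the perturbation analysis in the antisymmetric picture of \Cref{lem:extremereducedflow}. Replace $\mu$ by its reduced representative $P(\mu)$. This changes neither $\partial\mu$, since $\mu\wedge\mu^*$ is symmetric and hence a circulation by \Cref{lem:basic}~\ref{property1}, nor $R$. Put $\nu:=L(\mu)=\mu-\mu^*$, so that $\nu_+=\mu$.

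\emph{Step 1: the image of $\F_\varphi(J,\B,\beta)$ under $L$.} Exactly as in the proof of \Cref{lem:extremereducedflow}, we show that $L$ maps $\F_\varphi(J,\B,\beta)$ onto
\[
\G_\varphi:=\{\nu\in\mBB:\ \nu^*=-\nu,\ \nu_+\le\beta,\ \partial\nu\le 2\varphi\}.
\]
The key identity is $\partial(\theta-\theta^*)=2\partial\theta$, together with $\nu=\nu_+-(\nu_+)^*$ for antisymmetric $\nu$. Hence $[\mu]$ is extreme if and only if $\nu$ is extreme in $\G_\varphi$.

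\emph{Step 2: admissible perturbations.} As in \Cref{lem:extremereducedflow}, $\nu\pm\gamma\in\G_\varphi$ holds if and only if the following three conditions hold:
\begin{itemize}
\item[(a)] $\gamma^*=-\gamma$;
\item[(b)] $\gamma\le(\beta-\nu)\wedge(\beta^*+\nu)=R$;
\item[(c)] $-2\psi\le\partial\gamma\le2\psi$.
\end{itemize}
Here (c) comes from $\partial(\nu\pm\gamma)\le 2\varphi$ and $\partial\nu=2\partial\mu$. We also record that $R^*=R$, which follows from a direct transpose computation using $\nu^*=-\nu$. Writing $\rho:=\gamma_+$ gives $\gamma=\rho-\rho^*$, with $\rho$ reduced, $0\le\rho\le R$, and $\partial\gamma=2\partial\rho$. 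So (c) becomes $\pm\partial\rho\le\psi$. By the computation in the proof of \Cref{thm:extremesubmodularflow}, this says $\partial\rho\in\bmm(\psi)\cap\bmm(\psi^c)$. By \Cref{partialinbmm} applied to the network system $(J,\B,0,R)$, we also have $\partial\rho\in\bmm(\kappa_{0,R})$.

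\emph{Step 3: the two directions.}

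\emph{Conditions imply extremality.} Suppose $[\mu]$ is not extreme. Then there is a nonzero $\gamma$ as in Step 2, so $\rho\neq0$. If $\partial\rho\ne0$, then $\partial\rho$ is a nonzero element of the triple intersection, contradicting~\ref{rcond1}. If $\partial\rho=0$, then $\rho$ is a reduced circulation with $0<\rho\le R$, contradicting~\ref{rcond2}.

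\emph{Extremality implies the conditions.} If~\ref{rcond2} fails, the construction $\gamma:=\rho-\rho^*$ from \Cref{lem:extremereducedflow} works: $\partial\gamma=0$, so (c) holds trivially. If~\ref{rcond1} fails, take a nonzero $m$ in the intersection. By \Cref{partialinbmm}, $m=\partial\theta$ for some $0\le\theta\le R$. Set $\rho:=P(\theta)=\theta-\theta\wedge\theta^*$. Then $\rho$ is reduced, $0\le\rho\le\theta\le R$, and $\partial\rho=\partial\theta=m\neq0$, because $\theta\wedge\theta^*$ is symmetric and hence a circulation. Now let $\gamma:=\rho-\rho^*$. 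Then
\[
\gamma\le\rho\le R,\qquad -\gamma\le\rho^*\le R^*=R,
\]
and $\pm\partial\gamma=\pm2m\le2\psi$. Hence $\nu\pm\gamma\in\G_\varphi$ with $\gamma\ne0$, so $[\mu]$ is not extreme.

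The main obstacle is Step 1 together with the bookkeeping in (b) and (c). One must check carefully that the positive-part constraint $(\nu\pm\gamma)_+\le\beta$ reduces to the linear constraint $\gamma\le R$, which uses $\beta\ge0$ and the symmetry $R^*=R$. One must also check that the factor $2$ in $\partial(\theta-\theta^*)=2\partial\theta$ appears consistently on both sides of the submodular constraint, so that (c) is exactly $\pm\partial\rho\le\psi$.
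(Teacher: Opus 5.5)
Your proposal is correct and follows essentially the same route as the paper. You reduce to $\mu=P(\mu)$, transport to the antisymmetric set $\G_\varphi$ via $L$, write admissible perturbations as $\gamma=\rho-\rho^*$ with $\rho=\gamma_+$, and split according to whether $\partial\rho$ vanishes, using \Cref{partialinbmm} to identify $\bmm(\kappa_{0,R})$; your only deviations are cosmetic, since replacing $\theta$ by $P(\theta)$ in the converse produces the same perturbation $\gamma=\theta-\theta^*$ as the paper (because $L(P(\theta))=L(\theta)$), and your explicit check $R^*=R$ supplies a step the paper uses without comment.
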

\begin{proof}
We may replace \(\mu\) by its reduced representative \(P(\mu)\), since
\(P(\mu)\sim\mu\) and \(\partial P(\mu)=\partial\mu\). Thus assume
\(\mu=P(\mu)\), and put \(\nu:=L(\mu)=\mu-\mu^*\). Then
\(\nu^*=-\nu\), \(\nu^+=\mu\), and
\[
    R=R([\mu];\beta)=(\beta-\nu)\wedge(\beta^*+\nu).
\]

Let
\[
    \G_\varphi:=
    \{\xi\in\mBB:\xi^*=-\xi,\ \xi^+\le\beta,\ \partial\xi\le 2\varphi\}.
\]
The map \([\theta]\mapsto L(\theta)\) identifies
\(\widetilde{\F}_\varphi(J,\B,\beta)\) affinely with \(\G_\varphi\). Indeed,
if \(\theta\in\F_\varphi(J,\B,\beta)\), then
\(L(\theta)^*=-L(\theta)\), \(L(\theta)^+=P(\theta)\le\theta\le\beta\), and
\(\partial L(\theta)=2\partial\theta\le2\varphi\). Conversely, if
\(\xi\in\G_\varphi\), then \(\xi=\xi^+-(\xi^+)^*\), so
\(L(\xi^+)=\xi\), while \(0\le\xi^+\le\beta\) and
\(\partial\xi^+=\frac12\partial\xi\le\varphi\). Hence
\(\xi^+\in\F_\varphi(J,\B,\beta)\).

Therefore \([\mu]\) is extreme in \(\widetilde{\F}_\varphi\) if and only if
\(\nu\) is extreme in \(\G_\varphi\). The latter fails if and only if there is
a nonzero \(\gamma\in\mBB\) with \(\nu\pm\gamma\in\G_\varphi\), equivalently,
\[
    \gamma^*=-\gamma,\qquad
    \gamma\le R,\qquad
    \partial\gamma\le2\psi,\qquad
    -\partial\gamma\le2\psi .
\]
Since \(\partial\gamma(J)=0\), the last two inequalities are equivalently
\[
    \frac12\partial\gamma\in\bmm(\psi)\cap\bmm(\psi^c).
\]

Write \(\rho:=\gamma^+\). Since \(\gamma^*=-\gamma\), we have
\(\gamma=\rho-\rho^*\), \(\rho\wedge\rho^*=0\), and
\(\partial\gamma=2\partial\rho\). Moreover, as in the proof of
\Cref{lem:extremereducedflow}, the condition \(\gamma\le R\) is equivalent to
\(0<\rho\le R\). Thus \([\mu]\) is not extreme if and only if there exists a
nonzero reduced flow \(\rho\) such that
\[
    0<\rho\le R,\qquad
    \partial\rho\in\bmm(\psi)\cap\bmm(\psi^c).
\]

By \Cref{partialinbmm},
\[
    \bmm(\kappa_{0,R})
    =
    \{\partial\rho:\rho\in\mBB,\ 0\le\rho\le R\}.
\]
Hence any such \(\rho\) with \(\partial\rho\ne0\) violates condition
\ref{rcond1}, while any such \(\rho\) with \(\partial\rho=0\) is a nonzero
reduced circulation and violates condition \ref{rcond2}. Conversely, if
condition \ref{rcond1} fails, choose
\(0\ne q\in\bmm(\kappa_{0,R})\cap\bmm(\psi)\cap\bmm(\psi^c)\). By
\Cref{partialinbmm}, there is \(0\le\theta\le R\) with \(\partial\theta=q\);
then \(\gamma:=\theta-\theta^*\) is a nonzero perturbation satisfying
\(\nu\pm\gamma\in\G_\varphi\). If condition \ref{rcond2} fails, the same
conclusion follows by taking \(\gamma:=\rho-\rho^*\) for a nonzero reduced
circulation \(0<\rho\le R\). Therefore \([\mu]\) is extreme exactly when both
conditions hold.
\end{proof}

\section{Applications}\label{sec5}

\subsection{Transshipment and extremality}

As an application, we state the transshipment problem on measurable spaces as follows.
Fix two standard Borel spaces $(J_1,\B_1)$ and $(J_2,\B_2)$. We always regard $J_1\sqcup J_2$ as a disjoint union, even if the underlying sets overlap, and its $\sigma$-algebra is
\[
    \B_1\oplus\B_2:=\{A_1\sqcup A_2:A_i\in\B_i\}.
\]
Given a supply measure $\alpha\in\M_+(\B_1)$, a demand measure $\beta\in\M_+(\B_2)$ with $\alpha(J_1)=\beta(J_2)$, and a capacity measure $\psi\in \M_+((\B_1\oplus \B_2)^2)$ supported on $J_1\times J_2$, the tuple $(J_1,\B_1,J_2,\B_2,\alpha,\beta,\psi)$ is called a \emphd{transshipment system}.

Define $\sigma_\alpha,\sigma_\beta\in\M_+(\B_1\oplus \B_2)$ by $$\sigma_\alpha(X)=\alpha(X\cap J_1), \qquad\sigma_\beta(X)=\beta(X\cap J_2).$$
A flow $\mu$ on $(J_1\sqcup J_2,\B_1\sqcup \B_2)$ is called a \emphd{feasible $\alpha$-$\beta$ transshipment} if 
$$0\leq \mu\leq \psi\quad\text{ and }\quad\mu^1=\sigma_\alpha,\;\; \mu^2=\sigma_\beta.$$

Since $\psi$ is supported on $J_1\times J_2$, this is equivalent to requiring $\mu$ to be a feasible $\sigma_\alpha$-$\sigma_\beta$ flow.
The set of all feasible $\alpha$-$\beta$ transshipment is denoted by $$\T=\T(J_1,\B_1,J_2,\B_2,\alpha,\beta,\psi).$$

\begin{rem}
$\T$ is the set of couplings of $\alpha,\beta$ constrained by $\psi$.
\end{rem}

If $\T\neq \emptyset$, then $\T$ is a nonempty weakly compact convex set, and one can consider its extreme points.
Applying~\Cref{lem:extremereducedflow}, every $\mu \in \T$ is supported on some $J_1\times J_2$, and hence automatically reduced and the quotient $\T/_\sim$ equals $\T$. Moreover,  for any $\mu\in \T$, the residual graph simplifies to 
$$R=R([\mu];\psi)=\mu\wedge (\psi-\mu)+\mu^*\wedge(\psi^*-\mu^*)$$
because $\mu,\psi$ are supported on $J_1\times J_2$, while $\mu^*,\psi^*$ are supported on $J_2\times J_1$.

As a corollary of~\Cref{lem:extremereducedflow}, we obtain the following characterization of extreme feasible flows.
\begin{coro}\label{Coro:acyclic}
    A measure $\mu$ is an extreme point of $\T$ if and only if there is no reduced circulation $\gamma$ such that $$0< \gamma\leq \mu\wedge (\psi-\mu)+\mu^*\wedge(\psi^*-\mu^*) .$$
\end{coro}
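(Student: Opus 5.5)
The plan is to reduce \Cref{Coro:acyclic} to \Cref{lem:extremereducedflow}, applied on the disjoint union $J:=J_1\sqcup J_2$ with $\B:=\B_1\oplus\B_2$ and capacity $\psi$. Set $\sigma:=\sigma_\alpha$ and $\tau:=\sigma_\beta$; they have equal total mass since $\alpha(J_1)=\beta(J_2)$. I will work with $\F:=\F(J,\B,0,\psi,\sigma_\alpha,\sigma_\beta)$.

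First I will check that $\T=\F$. One inclusion is clear. For the other, take $\mu\in\F$. Then $0\le\mu\le\psi$, so $\mu$ is supported on $J_1\times J_2$. Hence $\mu^1$ is supported on $J_1$ and $\mu^2$ on $J_2$. The marginal condition $\mu^1-\mu^2=\sigma_\alpha-\sigma_\beta$, restricted to subsets of $J_1$ and of $J_2$ separately, then gives $\mu^1=\sigma_\alpha$ and $\mu^2=\sigma_\beta$.

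Next I will show that $\sim$ is trivial on $\F$, so that $\widetilde{\F}=\F$ as convex sets. For any $\mu\in\F$, $\mu$ lives on $J_1\times J_2$ and $\mu^*$ lives on $J_2\times J_1$; these sets are disjoint. Therefore $\mu\wedge\mu^*=0$ and $P(\mu)=\mu$. Consequently $[\mu]=[\nu]$ forces $\mu=P(\mu)=P(\nu)=\nu$. The map $\mu\mapsto[\mu]$ is affine by the definition of the convex structure on the quotient, so it is an affine bijection $\T\to\widetilde{\F}$. It follows that $\mu$ is extreme in $\T$ if and only if $[\mu]$ is extreme in $\widetilde{\F}$.

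The remaining step, and the only real computation, is to simplify $R([\mu];\psi)$. With $P(\mu)=\mu$,
\[
R=(\psi-\mu+\mu^*)\wedge(\psi^*-\mu^*+\mu).
\]
I will split $(J\times J)$ into $E_1:=J_1\times J_2$, $E_2:=J_2\times J_1$ and the remaining part, and apply \Cref{lem:basic}\ref{property7}.
\begin{itemize}
\item On $E_1$, the measures $\mu^*$ and $\psi^*$ vanish, so the meet there is $(\psi-\mu)\wedge\mu$.
\item On $E_2$, the measures $\mu$ and $\psi$ vanish, so the meet is $\mu^*\wedge(\psi^*-\mu^*)$.
\item Elsewhere every term vanishes.
\end{itemize}
Summing the three pieces gives $R=\mu\wedge(\psi-\mu)+\mu^*\wedge(\psi^*-\mu^*)$. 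Substituting this into \Cref{lem:extremereducedflow} yields the corollary. The main care needed is the support bookkeeping in the meet, and this is handled by the restriction rule \Cref{lem:basic}\ref{property7}.
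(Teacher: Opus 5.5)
Your proposal is correct and follows the same route as the paper. Every feasible transshipment is supported on $J_1\times J_2$, so it is automatically reduced and $\T$ coincides with its quotient. The residual graph $R([\mu];\psi)$ then splits along $J_1\times J_2$ and $J_2\times J_1$ into $\mu\wedge(\psi-\mu)+\mu^*\wedge(\psi^*-\mu^*)$, and \Cref{lem:extremereducedflow} finishes. You give more detail than the paper, namely the identification $\T=\F(J,\B,0,\psi,\sigma_\alpha,\sigma_\beta)$, the affine bijection with the quotient, and the explicit use of \Cref{lem:basic}\ref{property7}, but the argument is the same.
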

It remains to check whether $\T\neq \emptyset$. \cite[Proposition 4.15]{lovasz2021flows} gives a Hall-type characterization for the existence of feasible transshipments without the constraint measure $\psi$. Here we give a Hall-type characterization of the constrained version under certain assumptions on $\psi$.

\begin{prop}\label{prop:hall}
A measurable bipartite graph between \(J_1\) and \(J_2\) is a Borel set
\(E\subseteq J_1\times J_2\) such that for every \(S\in\mathcal B_1\),
\[N(S):=\{y\in J_2:\exists x\in S,\ (x,y)\in E\}\] belongs to \(\mathcal B_2\).
    Assume that \(\psi\) is supported on \(E\), and that for every
\(S_1\in\mathcal B_1\) and every \(S_2\in\mathcal B_2\) with
\(S_2\subseteq N(S_1)\), one has
\[
\psi(S_1\times S_2)\ge \beta(S_2).
\]
Then $\T\neq \emptyset$ if and only if $$\alpha(S)\leq \beta\big(N(S)\big)\quad \text{ for any $S\in \B_1$}.$$ 
\end{prop}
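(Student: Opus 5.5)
Necessity is immediate. If $\mu\in\T$, then $\mu$ is supported on $E$ because $0\le\mu\le\psi$ and $\psi$ is supported on $E$. For $S\in\B_1$, the set $E\cap(S\times J_2)$ is contained in $S\times N(S)$. Hence
\[
\alpha(S)=\mu(S\times J_2)=\mu\big(S\times N(S)\big)\le \mu\big(J_1\times N(S)\big)=\beta\big(N(S)\big).
\]

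For sufficiency, I would apply \Cref{flowexistence} on the standard Borel space $J:=J_1\sqcup J_2$ with lower bound $0$, upper bound $\psi$, $\sigma=\sigma_\alpha$ and $\tau=\sigma_\beta$. The hypothesis $\sigma(J)=\tau(J)$ holds because $\alpha(J_1)=\beta(J_2)$. Since $\psi$ is supported on $J_1\times J_2$, a feasible $\sigma_\alpha$-$\sigma_\beta$ flow is exactly an element of $\T$. The criterion of \Cref{flowexistence} then asks that for every $X\in\B_1\oplus\B_2$,
\[
-\psi(X^c\times X)\le \sigma_\alpha(X)-\sigma_\beta(X).
\]
Write $X=A\sqcup B$ with $A\in\B_1$ and $B\in\B_2$, and put $A^c:=J_1\setminus A$, $B^c:=J_2\setminus B$. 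Because $\psi$ lives on $J_1\times J_2$, we have $\psi(X^c\times X)=\psi(A^c\times B)$. So the condition to verify becomes
\[
\beta(B)-\alpha(A)\le\psi(A^c\times B)\qquad\text{for all }A\in\B_1,\ B\in\B_2 .
\]

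To verify this, split $B$ into $B_1:=B\cap N(A^c)$ and $B_2:=B\setminus N(A^c)$. These are Borel sets because $N(A^c)\in\B_2$ by the definition of a measurable bipartite graph. Since $B_1\subseteq N(A^c)$, the standing assumption on $\psi$ gives $\psi(A^c\times B_1)\ge\beta(B_1)$, and so $\psi(A^c\times B)\ge\beta(B_1)$. It therefore suffices to show $\beta(B_2)\le\alpha(A)$.

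This last bound is the one substantive step. Here $B_2$ consists of the points of $J_2$ that have no neighbour in $A^c$, so every edge of $E$ ending in $B_2$ starts in $A$. I would apply the Hall condition to $A^c$, which gives $\alpha(A^c)\le\beta(N(A^c))$. Since $N(A^c)\subseteq J_2\setminus B_2$, this yields
\[
\alpha(J_1)-\alpha(A)\le\beta(J_2)-\beta(B_2).
\]
Using $\alpha(J_1)=\beta(J_2)$, this rearranges to $\beta(B_2)\le\alpha(A)$, as needed.

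Combining, $\beta(B)=\beta(B_1)+\beta(B_2)\le\psi(A^c\times B)+\alpha(A)$, which is the required inequality. \Cref{flowexistence} then produces a feasible $\sigma_\alpha$-$\sigma_\beta$ flow $\mu$ with $0\le\mu\le\psi$. Because $\mu$ is supported on $J_1\times J_2$, its marginals are exactly $\mu^1=\sigma_\alpha$ and $\mu^2=\sigma_\beta$, so $\mu\in\T$ and $\T\neq\emptyset$.

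The main obstacle is the right choice of decomposition of $B$ together with applying the Hall condition to the complement $A^c$ rather than to $A$. The remaining care point is measure-theoretic: we need $N(A^c)$ to be Borel, and this is exactly what the definition of a measurable bipartite graph provides.
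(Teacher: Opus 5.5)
Your proof is correct and is essentially the paper's argument. The paper also applies \Cref{flowexistence} with bounds $0\le\mu\le\psi$, and under the relabeling $S_1=A^c$, $S_2=J_2\setminus B$ its chain $\psi(S_1\times(N(S_1)\setminus S_2))\ge\beta(N(S_1)\setminus S_2)\ge\beta(N(S_1))-\beta(S_2)\ge\alpha(S_1)-\beta(S_2)$ is your domination bound on $A^c\times B_1$ combined with Hall's condition for $A^c$. The only difference is bookkeeping: the paper uses $\alpha(J_1)=\beta(J_2)$ to pass to complements in the cut condition at the outset, whereas you keep the cut condition as stated and use the balance inside the Hall step to get $\beta(B_2)\le\alpha(A)$.
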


\begin{proof}
\emph{Necessity:}
 Let $\mu\in\T$. For $S\in\B_1$, since $\psi$ is supported on $E\subseteq J_1\times J_2$ and $\mu \le \psi$,
\[
    \mu(S\times(J_2\setminus N(S)))=0.
\]
Therefore
\[
    \alpha(S)=\mu(S\times J_2)=\mu(S\times N(S))\le\mu(J_1\times N(S))=\beta(N(S)).
\]

\noindent\emph{Sufficiency:}
Assume $\alpha(S)\le\beta(N(S))$ for every $S\in\B_1$. We apply \Cref{flowexistence} to the network system $(J_1\sqcup J_2,\B_1\oplus\B_2,0,\psi)$ with boundary $\sigma_\alpha-
\sigma_\beta$. Equivalently, applying the cut condition to complements, it is enough to prove that for every $S=S_1\sqcup S_2$ with $S_i\in\B_i$,
\[
    \psi(S\times S^c)\ge\sigma_\alpha(S)-\sigma_\beta(S)=\alpha(S_1)-\beta(S_2).
\]
Since $\psi$ is supported on $E\subseteq J_1\times J_2$,
\[
    \psi(S\times S^c)=\psi(S_1\times(J_2\setminus S_2))
    =\psi(S_1\times(N(S_1)\setminus S_2)).
\]
By the domination assumption on $\psi$,
\[
    \psi(S_1\times(N(S_1)\setminus S_2))
    \ge\beta(N(S_1)\setminus S_2)
    \ge\beta(N(S_1))-\beta(S_2)
    \ge\alpha(S_1)-\beta(S_2).
\]
Thus the cut condition holds and $\T\ne\emptyset$.
\end{proof}

\begin{rem}
    Without the condition that for every $S_1\in\B_1$ and every $S_2\in\B_2$ with $S_2\subseteq N(S_1)$, we have
    $$\psi(S_1\times S_2) \geq \beta(S_2),$$
    the theorem fails, as shown in \cite[Theorem 1.2]{kun2024measurablehall}. 
\end{rem}

\subsection{Fractional orientation of measurable graphs}
A classic application of discrete submodular flows is Frank's generalization of Nash-Williams' strong orientation theorem \cite{frank1996orientation}. It states that finding an edge orientation of an undirected graph where the in-degree of every subset is bounded below by a supermodular function is equivalent to finding a feasible submodular flow. Using our framework, we can generalize this result to standard Borel spaces as follows.

In this section we fix a standard Borel space $(J, \B)$ and a trivial flow $\Lambda \in \mpBB$ such that $\Lambda(\Delta_J) = 0$. Equivalently, $\Lambda=\Lambda^*$.
A \emphd{fractional orientation} of $\Lambda$ is a measure $\mu \in \mpBB$ such that 
\[    \mu + \mu^* = \Lambda.    \]
Note that this implies $0 \leq \mu \leq \Lambda$. For any subset $X \in \B$, the \emphd{inflow} into $X$ under the orientation $\mu$ is $\mu(X^c \times X)$. 

Suppose we are given a continuous supermodular set function $h \colon \B \to \R_{\geq 0}$ with $h(\emptyset) = h(J) = 0$, representing a minimum inflow demand required to maintain network connectivity. We seek an orientation $\mu$ satisfying the continuous degree constraint:
\begin{equation}\label{eq:orientation_demand}
    \mu(X^c \times X) \geq h(X) \textrm{ for all } X \in \B.
\end{equation}

By transforming this into a measurable submodular flow problem, we obtain the following continuous analogue of Frank's Orientation Theorem.

\begin{prop}\label{thm:measurable_orientation}
    Let $h$ be a continuous supermodular function on $\B$ with $h(\emptyset) = h(J) = 0$. There exists a fractional orientation $\mu$ of $\Lambda$ satisfying $\mu(X^c \times X) \geq h(X)$ for all $X \in \B$ if and only if
    \begin{equation}\label{eq:orientation_condition}
        h(X) \leq \Lambda(X \times X^c) \quad \text{for all } X \in \B.
    \end{equation}
\end{prop}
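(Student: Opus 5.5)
The plan is to reduce the orientation problem to a feasible submodular flow problem with lower capacity $0$, in the spirit of Frank's proof: fix a reference orientation, let a flow describe which part of it is reversed, and then apply \Cref{submoexist}.

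\emph{Necessity.} This direction is direct. If $\mu$ is a fractional orientation satisfying \eqref{eq:orientation_demand}, then, using $\mu\ge0$ and $\mu^*\le\Lambda$,
\[
h(X)\le\mu(X^c\times X)\le(\mu+\mu^*)(X^c\times X)=\Lambda(X^c\times X)=\Lambda(X\times X^c),
\]
where the last equality holds because $\Lambda=\Lambda^*$.

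\emph{Sufficiency: reference orientation.} Realize $J$ as a Polish space and fix a Borel linear order on it. For instance, take a Borel injection $J\to\R$ and set $E:=\{(x,y):x<y\}\in\B^2$. Then $J\times J=E\sqcup E^*\sqcup\Delta_J$, and since $\Lambda(\Delta_J)=0$ we get $\Lambda=\Lambda|_E+\Lambda|_{E^*}$ with $(\Lambda|_E)^*=\Lambda|_{E^*}$. For any $y$ with $0\le y\le\Lambda|_{E^*}$, put
\[
\mu_y:=\Lambda|_E-y^*+y .
\]
One checks that $\mu_y\ge0$ and $\mu_y+\mu_y^*=\Lambda|_E+\Lambda|_{E^*}=\Lambda$, so $\mu_y$ is a fractional orientation; conversely every fractional orientation has this form with $y:=\mu|_{E^*}$. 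Moreover $\partial\mu_y=\partial(\Lambda|_E)+2\partial y$ by \Cref{lem:basic}\ref{property1}. On the other hand, for any orientation $\mu$ we have $\mu(X\times X^c)+\mu(X^c\times X)=\Lambda(X\times X^c)$. Hence \eqref{eq:orientation_demand} is equivalent to $\partial\mu(X)\le\Lambda(X\times X^c)-2h(X)$.

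\emph{Sufficiency: the submodular flow problem.} Combining these identities with $\Lambda(X\times X^c)=\Lambda|_E(X\times X^c)+\Lambda|_E(X^c\times X)$, the demand becomes $\partial y\le\varphi$, where
\[
\varphi(X):=\Lambda|_E(X^c\times X)-h(X).
\]
This $\varphi$ satisfies the hypotheses of \Cref{submoexist}:
\begin{itemize}
\item it is a measure of the sets $X^c\times X$ (a submodular cut function by \Cref{kabsubmo}, since $\Lambda|_E(X^c\times X)=\kappa_{0,\Lambda|_E^*}(X)$) minus a supermodular function, hence submodular;
\item it is continuous, by \Cref{kabsubmo} and the continuity of $h$;
\item it vanishes at $\emptyset$ and $J$;
\item it is bounded, because $0\le h(X)\le\Lambda(X\times X^c)\le\Lambda(J\times J)$ under the assumed condition.
\end{itemize}
Applying \Cref{submoexist} to the system $(J,\B,0,\Lambda|_{E^*},\varphi)$, a feasible $y$ exists if and only if $-\Lambda|_{E^*}(X^c\times X)\le\varphi(X)$ for all $X$. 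This is equivalent to $h(X)\le\Lambda|_E(X^c\times X)+\Lambda|_{E^*}(X^c\times X)=\Lambda(X\times X^c)$, which is exactly \eqref{eq:orientation_condition}. Then $\mu_y$ is the desired orientation.

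The main obstacle is the bookkeeping in these identities, in particular the sign and factor $2$ in $\partial\mu_y$, together with the measurability of the reference orientation $E$. A secondary point is checking the boundedness and continuity of $\varphi$ required by \Cref{submoexist}.
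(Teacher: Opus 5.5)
Your proof is correct and follows essentially the same route as the paper. You fix a Borel reference orientation, parametrize fractional orientations by the reversed part, and apply \Cref{submoexist} with lower capacity $0$; your $y$ is the transpose of the paper's $\theta$, so your $\varphi(X)=\Lambda|_E(X^c\times X)-h(X)$ is the paper's $\varphi$ evaluated at $X^c$. Your explicit check that $\varphi$ is bounded, via $0\le h(X)\le\Lambda(X\times X^c)$, is a small point the paper leaves implicit.
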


\begin{proof}
    Necessity is immediate, since 
    $$\mu(X^c\times X)\leq\Lambda(X^c\times X)= \Lambda(X\times X^c)$$
    for every $X\in\B$, where the last equality uses $\Lambda=\Lambda^*$.

    For sufficiency, we fix a Borel linear order $\prec$ on $J$, which exists on every standard Borel space, and define 
    $$\Omega:=\{(x,y)\in J\times J\;:\; x\prec y \},\qquad \mu_0:=\Lambda|_{\Omega}.$$
    Since $\Lambda$ is a trivial flow and $\Lambda(\Delta_J) = 0$, we clearly have $\mu_0 + \mu_0^* = \Lambda$.
    
    Any orientation $\mu$ can be generated by reversing a portion of the flow in $\mu_0$. Specifically, let $\theta \in \mpBB$ be a flow such that $0 \leq \theta \leq \mu_0$. We define the new orientation as
    \[
        \mu := \mu_0 - \theta + \theta^*.
    \]
    It is easy to verify that $\mu \geq 0$ and $\mu + \mu^* = \mu_0 + \mu_0^* = \Lambda$. 
    For this $\mu$, the inflow into $X$ is calculated as:
    \begin{align*}
        \mu(X^c \times X) &= \mu_0(X^c \times X) - \theta(X^c \times X) + \theta^*(X^c \times X) \\
        &= \mu_0(X^c \times X) - \theta(X^c \times X) + \theta(X \times X^c) \\
        &= \mu_0(X^c \times X) + \partial\theta(X).
    \end{align*}
    Thus, the inflow constraint \eqref{eq:orientation_demand} becomes $\partial\theta(X) \geq h(X) - \mu_0(X^c \times X)$. 
    
    Since $\partial\theta(X^c) = - \partial\theta(X)$, substituting $X$ with $X^c$ gives:
    \[
        \partial\theta(X) \leq \mu_0(X \times X^c) - h(X^c).
    \]
    Define the set function $$\varphi(X) := \mu_0(X \times X^c) - h(X^c).$$ Note that $\mu_0(X \times X^c)=\kappa_{0,\mu_0}(X)$, which is submodular, and $h$ is supermodular which implies $-h(X^c)$ is also submodular. Thus, $\varphi$ is a continuous submodular set function with $\varphi(\emptyset) = \varphi(J) = 0$.

    The problem is now reduced to finding a feasible submodular flow $\theta$ in the network system $(J, \B, 0, \mu_0, \varphi)$. By \Cref{submoexist}, such $\theta$ exists if and only if for all $X \in \B$,
    \[
        - \mu_0(X^c \times X) \leq \varphi(X) = \mu_0(X \times X^c) - h(X^c).
    \]
    Rearranging this inequality yields
    \[
        h(X^c) \leq \mu_0(X \times X^c) + \mu_0(X^c \times X) = (\mu_0 + \mu_0^*)(X \times X^c) = \Lambda(X \times X^c).
    \]
    Since $X$ ranges over $\B$ and $\Lambda$ is symmetric, this is exactly condition \eqref{eq:orientation_condition}, which completes the proof.   
\end{proof}

It remains to understand when the fractional orientation can be chosen to be an orientation. Since $\mu+
\mu^*=\Lambda$, an orientation corresponds to choosing one direction $\Lambda$-almost everywhere; equivalently,
\[
    \frac{\mathrm d\mu}{\mathrm d\Lambda}\in\{0,1\}
    \qquad \Lambda\text{-a.e.}
\]
This is also equivalent to $\mu\wedge\mu^*=0$. In the finite setting, integrality follows from the totally dual integral property of the submodular flow problem. In the measurable setting the corresponding non-fractional problem is more delicate. We leave it as a problem.
\begin{ques}
    Under what circumstances can the fractional orientation $\mu$ given by~\Cref{thm:measurable_orientation} be chosen such that $\mu \wedge \mu^* = 0$?
\end{ques}

\section*{Acknowledgements}

The second author would like to express his sincere gratitude to Krist\'of B\'erczi for the warm hospitality during his visit to the Alfr\'ed R\'enyi Institute of Mathematics. He is also grateful to L\'aszl\'o Lov\'asz for sharing with him the problem of extending extreme points of basic minorizing charges of coverage functions, which motivates part of this work.

\printbibliography

\end{document}